\tikzset{cd/.style=matrix of math nodes,row sep=2em,column sep=2em, text
height=1.5ex, text depth=0.5ex}
\tikzset{cdar/.style=->,auto}
\tikzset{dar/.style={double,double equal sign distance,-implies}}
\tikzset{mid/.style={anchor=mid}} 
\tikzset{triar/.style={anchor=mid,->}}
\tikzset{tridar/.style={anchor=mid,double,double equal sign
distance,-implies}}
\tikzset{narrowfill/.style={inner sep=0pt, fill=white}}
\setlist[enumerate,1]{label=\textup{(\arabic*)}}
\setlist[enumerate,2]{label=\textup{(\alph*)}}
\newcommand{\longref}[2]{\hyperref[#2]{#1~\textup{\ref*{#2}}}}
\numberwithin{equation}{section}
\theoremstyle{plain}
\newtheorem{theorem}[equation]{Theorem}
\newtheorem{lemma}[equation]{Lemma}
\newtheorem{proposition}[equation]{Proposition}
\newtheorem{deflem}[equation]{Definition and Lemma}
\newtheorem{corollary}[equation]{Corollary}
\theoremstyle{definition}
\newtheorem{definition}[equation]{Definition}
\theoremstyle{remark}
\newtheorem{remark}[equation]{Remark}
\newtheorem{example}[equation]{Example}
\newcommand*{\alb}{\hspace{0pt}} 
\newcommand*{\nb}{\nobreakdash}
\newcommand{\Comp}{\mathbb K}
\newcommand{\Bound}{\mathbb B}
\newcommand*{\Qu}{\mathsf{p}}
\newcommand*{\s}{s} 
\newcommand*{\rg}{r}
\newcommand*{\Cat}[1][C]{\mathcal #1}
\newcommand*{\Bisp}[1][X]{\mathcal #1}
\newcommand{\Gr}[1][G]{\mathcal #1}
\newcommand*{\prop}{\mathrm{prop}}
\newcommand*{\mult}{\mathrm{mult}}
\newcommand*{\Grcat}{\mathfrak{Gr}}
\newcommand{\Corr}{\mathfrak{Corr}}
\newcommand*{\ContS}{\mathfrak S}
\newcommand*{\Cont}{\mathrm C}
\newcommand*{\Contc}{\mathrm{C_c}}
\newcommand*{\Contb}{\mathrm{C_b}}
\newcommand*{\Hilm}[1][E]{\mathcal #1}
\newcommand*{\Hils}[1][H]{\mathcal #1}
\newcommand*{\Cst}{\texorpdfstring{\textup C^*}{C*}}
\newcommand*{\Star}{$^*$\nobreakdash-}
\newcommand*{\defeq}{\mathrel{\vcentcolon=}}
\newcommand*{\congto}{\xrightarrow\sim}
\newcommand*{\id}{\mathrm{id}}
\newcommand*{\pr}{\mathrm{pr}}
\newcommand{\C}{\mathbb{C}}
\newcommand{\N}{\mathbb{N}}
\newcommand{\Grcomp}{\circ}
\newcommand{\pt}{\mathrm{pt}}
\DeclarePairedDelimiter{\abs}{\lvert}{\rvert}
\DeclarePairedDelimiter{\norm}{\lVert}{\rVert}
\DeclarePairedDelimiterX{\braket}[2]{\langle}{\rangle}{#1\,\delimsize\vert\,\mathopen{}#2}%
\DeclarePairedDelimiterX{\braketop}[3]{\langle}{\rangle}{#1\,\delimsize\vert
#2\delimsize\vert\,\mathopen{}#3}
\DeclarePairedDelimiterX{\BRAKET}[2]{\langle}{\rangle}{\!\delimsize\langle#1\,\delimsize\vert\,\mathopen{}#2\delimsize\rangle\!}%
\DeclarePairedDelimiterX{\setgiven}[2]{\{}{\}}{#1\,{:}\,\mathopen{}#2}
\newcommand*{\conj}[1]{\overline{#1}}
\DeclareMathOperator{\supp}{supp}
\DeclareMathOperator{\Ad}{Ad}
\begin{document}
\title{The bicategory of groupoid correspondences}
\author{Celso Antunes}
\email{celso.antunes@mathematik.uni-goettingen.de}
\author{Joanna Ko}
\email{joanna.ko.maths@gmail.com}
\author{Ralf Meyer}
\email{rmeyer2@uni-goettingen.de}
\address{Mathematisches Institut\\
  Universität Göttingen\\
  Bunsenstraße 3--5\\
  37073 Göttingen\\
  Germany}

\begin{abstract}
  We define a bicategory with étale, locally compact groupoids as
  objects and suitable correspondences, that is, spaces with two
  commuting actions as arrows; the \(2\)\nb-arrows are injective,
  equivariant continuous maps.  We prove that the usual recipe for
  composition makes this a bicategory, carefully treating also
  non-Hausdorff groupoids and correspondences.  We extend
  the groupoid \(\Cst\)\nb-algebra construction to a homomorphism
  from this bicategory to that of \(\Cst\)\nb-algebra
  correspondences.  We describe the \(\Cst\)\nb-algebras of
  self-similar groups, higher-rank graphs, and discrete Conduché
  fibrations in our setup.
\end{abstract}

\keywords{étale groupoid; groupoid correspondence; bicategory;
  product system; groupoid \(\Cst\)\nb-algebra; Conduché fibration;
  self-similar group; higher-rank graph}

\maketitle

\section{Introduction}
\label{sec:intro}

Many interesting \(\Cst\)\nb-algebras may be realised as
\(\Cst\)\nb-algebras of étale, locally compact groupoids.  Examples
are the \(\Cst\)\nb-algebras associated to group actions on spaces,
(higher-rank) graphs, self-similar groups, and many
\(\Cst\)\nb-algebras associated to semigroups.  A (higher-rank)
graph is interpreted in~\cite{Albandik-Meyer:Product} as a
generalised dynamical system.  A self-similarity of a
group may also be interpreted in this way, namely, as a generalised
endomorphism of a group.  This suggests a way to put various
constructions of groupoids and their \(\Cst\)\nb-algebras under a
common roof, starting with a rather general kind of dynamical system.

This programme is worked out to a large extent in the dissertation
of Albandik~\cite{Albandik:Thesis}.  His results have not yet been
published in journal articles.  This article is concerned with the
most basic part of the programme.  Namely, we define groupoid
correspondences, which are the ``generalised maps'' between étale,
locally compact groupoids; we show that they form a bicategory, and
that taking groupoid \(\Cst\)\nb-algebras is a homomorphism to the
\(\Cst\)\nb-correspondence bicategory \(\Corr\) introduced
in~\cite{Buss-Meyer-Zhu:Higher_twisted}.

A homomorphism from a category to this bicategory~\(\Corr\) is
identified with a product system over~\(\Cat\)
in~\cite{Albandik-Meyer:Colimits}.  If the product system is proper,
then it gives rise to an ``absolute'' Cuntz--Pimsner algebra, where
the Cuntz--Pimsner covariance condition is asked for all elements.
We show that many constructions of \(\Cst\)\nb-algebras from
combinatorial or dynamical data are examples of such absolute
Cuntz--Pimsner algebras of product systems obtained from
homomorphisms to the bicategory of groupoid correspondences.  This
contains the \(\Cst\)\nb-algebras of regular topological graphs,
self-similar groups, row-finite higher-rank graphs, higher-rank
self-similar groups, and even the rather general discrete Conduché
fibrations of Brown and Yetter~\cite{Brown-Yetter:Conduche}.  Thus
the theory developed here offers a unified approach to several
important constructions of \(\Cst\)\nb-algebras.  In this article,
we only set up the bicategories and the homomorphism to~\(\Corr\)
and identify the resulting Cuntz--Pimsner algebras in some examples.
In following papers and in the thesis~\cite{Albandik:Thesis}, it is
shown how to realise these Cuntz--Pimsner algebras as groupoid
\(\Cst\)\nb-algebras, provided the underlying category satisfies Ore
conditions.  The relevance of the Ore conditions is also noticed in
the theory of discrete Conduché fibrations
in~\cite{Brown-Yetter:Conduche}.

While most results that we prove here are rather basic, there are
some technical difficulties that warrant a careful treatment.  It is
well known that mapping a groupoid to its groupoid
\(\Cst\)\nb-algebra cannot be functorial when we use functors as
arrows between groupoids.  The problem is manifest if we look at the
subclasses of spaces and groups: the group \(\Cst\)\nb-algebra is a
covariant functor for group homomorphisms, whereas the map
\(X\mapsto \Cont_0(X) = \Cst(X)\) for locally compact spaces is a
contravariant functor for proper continuous maps.  Buneci and
Stachura~\cite{Buneci-Stachura:Morphisms_groupoids} found a way
around this: they define suitable arrows between groupoids that do
induce morphisms between the groupoid \(\Cst\)\nb-algebras.  Our aim
are \(\Cst\)\nb-algebra correspondences instead of morphisms of
\(\Cst\)\nb-algebras.  Our theorem that there is a homomorphism of
bicategories from the bicategory of groupoid correspondences to that
of \(\Cst\)\nb-correspondences makes precise that the groupoid
\(\Cst\)\nb-algebra is ``functorial'' for these two types of
correspondences.  In the dissertation of Holkar (see
\cites{Holkar:Thesis, Holkar:Construction_Corr,
  Holkar:Composition_Corr}), a similar homomorphism is constructed
in the realm of Hausdorff, locally compact groupoids with Haar
system.  Holkar must decorate a groupoid correspondence with an analogue
of a Haar system, which makes his theory much more difficult.  To
reduce the technicalities, Holkar assumes his groupoids to be
Hausdorff.  We cannot do this, however, because the groupoids
associated to self-similar groups may fail to be Hausdorff, and we
want our theory to cover this case.

A groupoid correspondence is a space with commuting actions of the
two groupoids involved, which satisfy some extra conditions.  Asking
for more conditions, we get Morita equivalences of groupoids.  It is
well known that these form a bicategory and that taking groupoid
\(\Cst\)\nb-algebras is a homomorphism from this bicategory to the
bicategory of \(\Cst\)\nb-algebras and Morita--Rieffel equivalences;
this goes back already to the seminal work of
Muhly--Renault--Williams~\cite{Muhly-Renault-Williams:Equivalence},
except that they do not use the language of bicategories and allow
the more general case of locally compact groupoids with Haar
systems.  Another variant of groupoid correspondences was studied by
Hilsum--Skandalis~\cite{Hilsum-Skandalis:Morphismes} to construct
wrong-way functoriality maps between the K\nb-theory groups of
groupoid \(\Cst\)\nb-algebras.  These, however, usually fail to
induce \(\Cst\)\nb-correspondences.

Groupoid correspondences, Morita equivalences, and the morphisms of
Hilsum--Skandalis differ only in the technical details, that is, in
the extra conditions asked for the commuting actions of the two
groupoids.  The composition is defined in the same way in all three
cases.  What is different, of course, is the proof that the
composite again satisfies the relevant extra conditions.  Since the
bicategory of étale groupoid correspondences is a critical
ingredient in a larger programme, we find it useful to prove its
expected properties from scratch.

This article is structured as follows.  In
\longref{Section}{sec:groupoids}, we define étale groupoids and
their actions, and classes like free, proper, and basic actions.  We
also prove again that an action is free and proper if and only if it
is basic and its orbit space is Hausdorff.  In
\longref{Section}{sec:groupoid_corr}, we define étale groupoid
correspondences.  \longref{Section}{sec:groupoid_corr_examples}
illustrates them by several examples, which are related to
topological graphs, self-similar groups and self-similar graphs.
This justifies viewing groupoid correspondences as generalised maps
between groupoids.  In \longref{Section}{sec:composition}, we define
the composition of groupoid correspondences.  In
\longref{Section}{sec:bicategory}, we build a bicategory that has
groupoids as objects and groupoid correspondences as arrows.  We
also briefly recall the analogous bicategory of
\(\Cst\)\nb-correspondences.  In \longref{Section}{sec:to_Cstar}, we
build \(\Cst\)\nb-correspondences from groupoid correspondences and
show that this is part of a homomorphism of bicategories.  For
topological graphs and self-similar groups and graphs, we recover
\(\Cst\)\nb-correspondences that were used before to describe their
\(\Cst\)\nb-algebras as Cuntz--Pimsner algebras.  More generally, a
homomorphism from a monoid to the groupoid correspondence bicategory
gives rise to a product system over that monoid.  This suggests a
way to associate a \(\Cst\)\nb-algebra to any such homomorphism.  We
examine such homomorphisms and the resulting product systems at the
end of this article.  In \longref{Section}{sec:Conduche}, we
identify discrete Conduché fibrations with bicategory homomorphisms
into the bicategory of groupoid correspondences, and identify the
Cuntz--Pimsner algebra of the resulting product system with the
\(\Cst\)\nb-algebra of the discrete Conduché fibration as defined
previously.  We also briefly discuss the self-similar
\(k\)\nb-graphs of Li and Yang.

\section{Étale groupoids and groupoid actions}
\label{sec:groupoids}

Here we define (locally compact) étale groupoids and their actions
on topological spaces.  We prove that an action is free and proper
if and only if it is ``basic'' and has Hausdorff orbit space.  Most
of this is standard, and the last result is shown in
\cite{Buss-Meyer:Actions_groupoids}*{Proposition~A.7}.

\begin{definition}
  \label{def:groupoid}
  An \emph{étale \textup{(}topological\textup{)} groupoid} is a
  groupoid~\(\Gr\) with topologies on
  the arrow and object spaces \(\Gr\) and~\(\Gr^0\) such that the
  range and source maps \(\rg,\s\colon \Gr\rightrightarrows \Gr^0\)
  are local homeomorphisms and the multiplication and inverse maps are
  continuous.
  An étale groupoid is \emph{locally compact} if the object
  space~\(\Gr^0\) is Hausdorff and locally compact.
\end{definition}

We usually view~\(\Gr^0\) as a subset of~\(\Gr\) by the unit map,
that is, we identify an object \(x\in\Gr^0\) with the unit arrow
on~\(x\).

\begin{remark}
  We assume étale groupoids to be locally compact in order to pass
  to \(\Cst\)\nb-algebras later on.  The bicategory of groupoid
  correspondences may also be defined more generally, to have all
  étale groupoids as objects.  The reader interested in this will
  note that local compactness only becomes relevant in
  \longref{Section}{sec:to_Cstar} when we turn to \(\Cst\)\nb-algebras.
  Since all groupoids in this article shall be étale and locally
  compact, we usually drop these adjectives.
\end{remark}

\begin{definition}
  Let~\(\Gr\) be a groupoid.  A \emph{right \(\Gr\)\nb-space} is a
  topological space~\(\Bisp\) with a continuous map \(\s\colon
  \Bisp\to \Gr^0\), the \emph{anchor map}, and a continuous map
  \[
    \mult\colon \Bisp\times_{\s,\Gr^0,\rg} \Gr\to\Bisp,\qquad
    \Bisp\times_{\s,\Gr^0,\rg}\Gr\defeq
    \setgiven{(x,g)\in \Bisp\times \Gr}{\s(x)=\rg(g)},
  \]
  which we denote multiplicatively as~\(\cdot\), such that
  \begin{enumerate}
  \item \(\s(x\cdot g)=\s(g)\) for \(x\in\Bisp\), \(g\in\Gr\) with
    \(\s(x)=\rg(g)\);
  \item \((x\cdot g_1)\cdot g_2=x\cdot (g_1\cdot g_2)\) for \(x\in \Bisp\),
    \(g_1, g_2\in \Gr\) with \(\s(x)=\rg(g_1)\), \(\s(g_1)=\rg(g_2)\);
  \item \(x\cdot \s(x)=x\) for all \(x\in \Bisp\).
  \end{enumerate}
\end{definition}

\begin{definition}
  The \emph{orbit space}~\(\Bisp/\Gr\) is the
  quotient~\(\Bisp/{\sim_{\Gr}}\) with the quotient topology, where
  \(x\sim_{\Gr} y\) if there is an element \(g\in \Gr\) with
  \(\s(x)=\rg(g)\) and \(x\cdot g=y\).
\end{definition}

Left \(\Gr\)\nb-spaces are defined similarly.  We always write
\(\s\colon \Bisp\to \Gr^0\) for the anchor map in a right action and
\(\rg\colon \Bisp\to \Gr^0\) for the anchor map in a left action.

\begin{definition}
  \label{def:equivariant_map}
  Let \(\Bisp\) and~\(\Bisp[Y]\) be right \(\Gr\)\nb-spaces.  A
  continuous map \(f\colon \Bisp \to \Bisp[Y]\) is
  \emph{\(\Gr\)\nb-equivariant} if \(\s(f(x))=\s(x)\) for all
  \(x\in \Bisp\) and \(f(x\cdot g)=f(x)\cdot g\) for all \(x\in
  \Bisp\), \(g\in \Gr\) with \(\s(x)=\rg(g)\).
\end{definition}

\begin{definition}
  \label{def:invariant_map}
  Let \(\Bisp\) be a right \(\Gr\)\nb-space and~\(\Bisp[Z]\) a
  space.  A continuous map \(f\colon \Bisp \to \Bisp[Z]\) is
  \emph{\(\Gr\)\nb-invariant} if \(f(x\cdot g)=f(x)\) for all \(x\in
  \Bisp\), \(g\in \Gr\) with \(\s(x)=\rg(g)\).
\end{definition}

\begin{definition}
  \label{def:basic_action}
  A right \(\Gr\)\nb-space~\(\Bisp\) is \emph{basic} if the following
  map is a homeomorphism onto its image with the subspace topology
  from \(\Bisp\times\Bisp\):
  \begin{equation}
    \label{eq:action_map}
    f\colon \Bisp\times_{\s,\Gr^0,\rg} \Gr \to \Bisp\times\Bisp,\qquad
    (x,g)\mapsto (x\cdot g,x).
  \end{equation}
\end{definition}

The following useful lemma is used repeatedly throughout this article:

\begin{lemma}
  \label{lem:pullback_local_homeo}
  A pullback of a local homeomorphism is again a local
  homeomorphism.
\end{lemma}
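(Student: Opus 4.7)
The plan is to work pointwise: given a pullback square with a local homeomorphism $f\colon X\to Z$ along a continuous map $g\colon Y\to Z$, I would show that the second projection $\pi\colon X\times_Z Y\to Y$, $(x,y)\mapsto y$, is a local homeomorphism by exhibiting, for each point $(x_0,y_0)$ in the pullback, an open neighbourhood on which~$\pi$ restricts to a homeomorphism onto an open subset of~$Y$.

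First I would use the hypothesis that~$f$ is a local homeomorphism to pick an open neighbourhood $U\subseteq X$ of~$x_0$ such that $f(U)$ is open in~$Z$ and $f|_U\colon U\to f(U)$ is a homeomorphism; let $h\colon f(U)\to U$ denote its inverse. Then I would set $V\defeq g^{-1}(f(U))\subseteq Y$, which is open by continuity of~$g$ and contains~$y_0$ because $g(y_0)=f(x_0)\in f(U)$. The set $W\defeq (U\times V)\cap (X\times_Z Y)$ is an open neighbourhood of $(x_0,y_0)$ in the pullback.

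Next I would verify that $\pi|_W\colon W\to V$ is a homeomorphism. The candidate inverse is $\sigma\colon V\to W$, $y\mapsto (h(g(y)),y)$, which is continuous as a composition of continuous maps and which lands in~$W$ because $f(h(g(y)))=g(y)$ by construction. One composition $\pi\circ\sigma=\id_V$ is immediate. For the other, given $(x,y)\in W$ we have $x\in U$ and $f(x)=g(y)\in f(U)$, so $h(g(y))=h(f(x))=x$ and hence $\sigma(\pi(x,y))=(x,y)$. Since~$\pi|_W$ has a continuous inverse, it is a homeomorphism onto the open set~$V\subseteq Y$.

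There is really no obstacle here: the proof is entirely a matter of unwinding the definition of the pullback and using the local homeomorphism property in a single chart. The only mild subtlety is remembering that the pullback carries the subspace topology from $X\times Y$, so that $W$ is automatically open in $X\times_Z Y$ and~$\sigma$ is automatically continuous into it.
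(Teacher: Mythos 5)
Your proof is correct and follows essentially the same approach as the paper: choose a chart on which the local homeomorphism restricts to a homeomorphism, pull it back, and check that the projection restricted to the resulting basic open set is a homeomorphism onto an open subset. The only cosmetic difference is that you exhibit the continuous local inverse $\sigma$ explicitly, whereas the paper concludes by showing the projection is open and injective on the corresponding set; both arguments are complete.
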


\begin{proof}
  Consider the following pullback diagram with a continuous map
  $\alpha\colon A \to C$ and a local homeomorphism
  $\beta\colon B \to C$:
  \begin{equation}
    \label{pullback}
    \begin{tikzcd}
      A \times_{C} B \arrow[r, "\pi_2"] \arrow[d, "\pi_1"']
      \arrow[dr, phantom, "\scalebox{1.5}{$\lrcorner$}" , very near
      start, color=black]
      & B \arrow[d, "\beta"] \\
      A \arrow[r, "\alpha"'] & C
    \end{tikzcd}
  \end{equation}

  Let $(a, b) \in A \times_C B$.  By the definition of the product
  topology, any neighbourhood~\(N\) of~\((a,b)\) in $A \times_C B$
  contains a neighbourhood of the form \(U_a \times_C U_b\) with
  open neighbourhoods \(U_a\) and~\(U_b\) of \(a\) and~\(b\) in
  \(A\) and~\(B\), respectively.  Since~$\beta$ is a local
  homeomorphism, we may shrink~$U_b$ so that $\beta(U_b)$ is open
  and~$\beta$ restricts to a homeomorphism on~$U_b$.  Now
  \[
    \pi_1(U_a \times_C U_b)
    = \setgiven{x \in U_a}{\text{there is }y \in U_b \text{ with } \alpha(x)
      = \beta(y)}
    = \alpha^{-1}(\beta(U_b)) \cap U_a.
  \]
  Since~$\alpha$ is continuous, $\alpha^{-1}(\beta(U_b)) \cap U_a$ is open in~$A$.
  It follows that~\(\pi_1(N)\) is a neighbourhood of
  \(\pi_1(a,b)=a\).  Therefore, \(\pi_1\) is open.  Let
  $(a_1, b_1), (a_2, b_2) \in A \times_C U_b$ satisfy
  $\pi_1(a_1, b_1) = \pi_1(a_2, b_2)$.  Then $a_1 = a_2$ and
  $\beta(b_1) = \alpha(a_1) = \alpha(a_2) = \beta(b_2)$.  Since $\beta|_{U_b}$ is
  injective, this implies $b_1 = b_2$.  So~\(\pi_1|_{A\times_C
    U_b}\) is a homeomorphism onto an open subset of~\(A\).
\end{proof}

\begin{lemma}
  \label{lem:action_local_homeo}
  Let~$\Bisp$ be a right $\Gr$\nb-space.  The action
  $\mult\colon \Bisp \times_{\s, \Gr^0, \rg } \Gr \to \Bisp$ is a
  surjective local homeomorphism.
\end{lemma}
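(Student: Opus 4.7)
The plan is to factor the action map as a homeomorphism followed by a projection that is a local homeomorphism by \longref{Lemma}{lem:pullback_local_homeo}.

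First, surjectivity is immediate: for any $x \in \Bisp$ we have $\mult(x, \s(x)) = x \cdot \s(x) = x$, using that $\s(x) \in \Gr^0$ is a unit with $\rg(\s(x)) = \s(x)$.

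For the local homeomorphism property, I would introduce the map
\[
  \Phi\colon \Bisp \times_{\s,\Gr^0,\rg} \Gr \to \Bisp \times_{\s,\Gr^0,\rg} \Gr,
  \qquad (x,g) \mapsto (x\cdot g, g^{-1}).
\]
This is well defined since $\s(x\cdot g) = \s(g) = \rg(g^{-1})$. A direct computation shows $\Phi \circ \Phi = \id$, so $\Phi$ is an involution, and it is continuous because inversion and multiplication in $\Gr$ and the action of $\Gr$ on $\Bisp$ are continuous. Hence $\Phi$ is a homeomorphism. Now observe that
\[
  \mult(x,g) = x\cdot g = \pi_1(\Phi(x,g)),
\]
where $\pi_1\colon \Bisp \times_{\s,\Gr^0,\rg} \Gr \to \Bisp$ is the projection onto the first factor.

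Since $\Gr$ is étale, the range map $\rg\colon \Gr \to \Gr^0$ is a local homeomorphism. The projection~$\pi_1$ is obtained from~$\rg$ by the pullback along $\s\colon \Bisp \to \Gr^0$, so by \longref{Lemma}{lem:pullback_local_homeo} it is a local homeomorphism. Consequently $\mult = \pi_1 \circ \Phi$ is the composite of a homeomorphism and a local homeomorphism, hence a local homeomorphism itself. There is no substantive obstacle here; the only thing to notice is the useful involution~$\Phi$ that trades the action map for a pullback projection, after which the previous lemma does all the work.
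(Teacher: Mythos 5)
Your proof is correct and is essentially the paper's argument: the paper factors $\mult$ as the homeomorphism $(x,g)\mapsto(x\cdot g,g)$ onto $\Bisp\times_{\s,\Gr^0,\s}\Gr$ followed by the first projection, which is a pullback of the local homeomorphism~$\s$. Your involution $\Phi$ differs from this only by composing with $g\mapsto g^{-1}$ in the second coordinate (so you pull back $\rg$ instead of $\s$), which buys the small convenience that invertibility of $\Phi$ is automatic.
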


\begin{proof}
  The map
  \[
    (\mult, \id_{\Gr}) \colon \Bisp \times_{\s, \Gr^0, \rg} \Gr \to
    \Bisp \times_{\s,\Gr^0, \s} \Gr, \qquad (x, g) \mapsto (x \cdot g, g),
  \]
  is continuous because $\mult$ is continuous.  It has an inverse
  map
  \[
    (\mult', \id_{\Gr}) \colon \Bisp \times_{\s,\Gr^0, \s} \Gr \to \Bisp \times_{\s,
      \Gr^0, \rg} \Gr, \qquad (y, g) \mapsto (y \cdot g^{-1}, g),
  \]
  which is also continuous.  Hence $(\mult, \id_{\Gr})$ is a
  homeomorphism.  Since $\s \colon \Gr \to \Gr^0$ is a local
  homeomorphism, so is the coordinate projection
  $\pi_1 \colon \Bisp \times_{\s,\Gr^0, \s} \Gr \to \Bisp$ by
  \longref{Lemma}{lem:pullback_local_homeo}.  Then
  $\pi_1 \circ (\mult, \id_{\Gr}) = \mult$ is a local homeomorphism
  as the composite of two local homeomorphisms.  It is surjective
  because of the section
  \(\Bisp \to \Bisp \times_{\s, \Gr^0, \rg } \Gr\),
  \(x\mapsto (x,\s(x))\).
\end{proof}

\begin{lemma}
  \label{lem:basic_orbit_lh}
  The orbit space projection \(\Qu\colon \Bisp\to\Bisp/\Gr\) for a
  basic \(\Gr\)\nb-action is a surjective local homeomorphism.
\end{lemma}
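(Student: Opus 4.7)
My plan is to show that $\Qu$ is surjective, open, and locally injective; any continuous open injection onto an open subset is a homeomorphism, so local injectivity plus openness gives the local homeomorphism property. Surjectivity is immediate from the definition of the quotient.

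For openness, I would argue that $\Qu$ is an open map by showing that saturations of open subsets are open. Given an open $V\subseteq\Bisp$, the preimage $\Qu^{-1}(\Qu(V))$ is the image of the open set $V\times_{\s,\Gr^0,\rg}\Gr$ under the action map $\mult$. By \longref{Lemma}{lem:action_local_homeo}, $\mult$ is a local homeomorphism, hence open, so $\Qu^{-1}(\Qu(V))$ is open, and therefore $\Qu(V)$ is open by the definition of the quotient topology.

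For local injectivity, fix $x\in\Bisp$; I need an open neighbourhood $U$ of~$x$ such that no two distinct points of~$U$ lie in the same $\Gr$-orbit. The key input is the basic property: the map $f$ from~\eqref{eq:action_map} is a homeomorphism onto its image, so the inverse $f^{-1}$ is continuous on $\mathrm{image}(f)$. Because~\(\Gr\) is étale, $\Gr^0$ is open in~$\Gr$ (the unit map is a section of the local homeomorphism~$\s$), so $\Bisp\times_{\s,\Gr^0,\rg}\Gr^0$ is open in $\Bisp\times_{\s,\Gr^0,\rg}\Gr$. Since $f^{-1}(x,x)=(x,\s(x))$ lies in this open set, continuity of~$f^{-1}$ gives an open neighbourhood $W\subseteq\Bisp\times\Bisp$ of $(x,x)$ with $f^{-1}(W\cap\mathrm{image}(f))\subseteq \Bisp\times_{\s,\Gr^0,\rg}\Gr^0$. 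Choose open $U\ni x$ with $U\times U\subseteq W$. If $y_1,y_2\in U$ satisfy $y_1=y_2\cdot g$, then $(y_1,y_2)=f(y_2,g)\in W\cap\mathrm{image}(f)$, forcing $g\in\Gr^0$, i.e.\ $g=\s(y_2)$, and hence $y_1=y_2$. Thus $\Qu|_U$ is injective.

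Combining the three properties, $\Qu|_U\colon U\to\Qu(U)$ is a continuous open bijection onto an open subset of $\Bisp/\Gr$, hence a homeomorphism, as required. I expect the main obstacle to be pinpointing the correct use of the basic hypothesis in step three; the slick path is the one above, where the openness of~$\Gr^0$ in~$\Gr$ together with continuity of $f^{-1}$ immediately converts the basic condition into local triviality of the equivalence relation near the diagonal.
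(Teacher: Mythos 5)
Your proposal is correct and follows essentially the same route as the paper: openness of $\Qu$ via the saturation $\Qu^{-1}(\Qu(V))=\mult(V\times_{\s,\Gr^0,\rg}\Gr)$ and \longref{Lemma}{lem:action_local_homeo}, and local injectivity by using the basic hypothesis to pull the open set $\Bisp\times_{\s,\Gr^0,\rg}\Gr^0$ (open since~$\Gr$ is étale) through $f^{-1}$ to a neighbourhood of the diagonal. The only cosmetic difference is that the paper phrases the last step globally, writing $f(I)=V\cap(\Bisp\times_{p,\Bisp/\Gr,p}\Bisp)$ for a single open $V\subseteq\Bisp\times\Bisp$ before localising, whereas you work pointwise from the start.
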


\begin{proof}
  Let
  \begin{align*}
    \Bisp \times_{\s, \Gr^0,\rg} \Gr
    &\defeq
      \setgiven{(x, g) \in \Bisp \times \Gr}{\s(x) = \rg(g)},\\
    \Bisp \times_{p,\Bisp/\Gr,p} \Bisp
    &\defeq
      \setgiven{(x_1, x_2) \in \Bisp \times \Bisp}{p(x_1) = p(x_2)}.
  \end{align*}
  Since the right $\Gr$-action is basic, the following map is a
  homeomorphism:
  \[
    f\colon \Bisp \times_{\s, \Gr^0, \rg } \Gr \to
    \Bisp \times_{p,\Bisp/\Gr,p} \Bisp, \qquad
    (x, g) \mapsto (x \cdot g, x).
  \]
  The set of identity arrows $\Gr^0 \subseteq \Gr^1$ is open
  because~\(\Gr\) is étale.  Then $\Bisp \times \Gr^0$ is open in
  $\Bisp \times \Gr$.  Hence
  $I \defeq (\Bisp \times \Gr^0) \cap (\Bisp \times_{\s, \Gr^0, \rg}
  \Gr)$ is open in $\Bisp \times_{\s, \Gr^0, \rg} \Gr$.  If
  \((x,g)\in I\), then \(f(x,g) = x \cdot \s(x) = x\).  Therefore,
  $f(I) = \setgiven{(x, x)}{x \in X}$.  Since $f$ is a
  homeomorphism, $f(I)$ is open in
  $\Bisp \times_{p,\Bisp/\Gr,p} \Bisp$.  Thus there is an open
  subset $V \subseteq \Bisp \times \Bisp$ such that
  $f(I) = V \cap \Bisp \times_{p,\Bisp/\Gr,p} \Bisp$.  For each
  \(x\in \Bisp\), there is an open neighbourhood $U \subseteq \Bisp$
  with $U \times U \subseteq V$.  Then
  \[
    (U \times U) \cap (\Bisp \times_{p,\Bisp/\Gr,p} \Bisp)
    = \setgiven{(u, u)}{u \in U}.
  \]
  Suppose $p(u_1) = p(u_2)$ for some $u_1, u_2 \in U$.  Then
  $u_1 = u_2 \cdot g$ for some $g \in \Gr$.  Then
  $(u_2 \cdot g, u_2) \in (U \times U) \cap (\Bisp
  \times_{p,\Bisp/\Gr,p} \Bisp)$ and hence $u_1 = u_2$.  That is,
  $p$ is injective on~$U$.  Next we show that~$p$ is open.  Let
  $W \subseteq \Bisp$ be open.  Then
  \[
    p^{-1}(p(W))
    = \setgiven{w \cdot g}{w \in W,\ g \in \Gr,\ \s(w) = \rg(g)}
    = \mult(W \times_{\s, \Gr^0, \rg} \Gr)
  \]
  is open by \longref{Lemma}{lem:action_local_homeo}.  Since~$p$ is a quotient
  map, it follows that~$p(W)$ is open.
\end{proof}

\begin{definition}
  \label{def:free_action}
  A right \(\Gr\)\nb-space is \emph{free} if the map
  in~\eqref{eq:action_map} is injective; equivalently,
  \(x\cdot g = x\) for \(x\in \Bisp\), \(g\in \Gr\) with
  \(\s(x)=\rg(g)\) implies \(g = \s(x)\).
\end{definition}

\begin{definition}
  \label{def:proper_action}
  A continuous map~\(f\) is \emph{proper} if the map
  \(f\times \id_Z\) is closed for any topological space~\(Z\).  A
  right \(\Gr\)\nb-space~\(\Bisp\) is \emph{proper} if the map
  in~\eqref{eq:action_map} is proper.  An étale groupoid is proper
  if its canonical action on~\(\Gr^0\) is proper.  Equivalently, the
  following map is proper:
  \[
    (\rg,\s)\colon\Gr\to\Gr^0\times\Gr^0,\qquad
    g\mapsto (\rg(g),\s(g)).
  \]
\end{definition}

Our next goal is to relate free and proper actions to basic actions.

\begin{lemma}
  \label{lem:closed_vs_Hausdorff}
  For a basic $\Gr$\nb-action,
  $\Bisp \times_{p,\Bisp/\Gr,p} \Bisp \subseteq \Bisp \times \Bisp$
  is closed if and only if~$\Bisp/\Gr$ is Hausdorff.
\end{lemma}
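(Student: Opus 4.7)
The plan is to recognise the set \(\Bisp \times_{p,\Bisp/\Gr,p} \Bisp\) as the preimage of the diagonal of \(\Bisp/\Gr\) under the map \(p \times p\colon \Bisp \times \Bisp \to \Bisp/\Gr \times \Bisp/\Gr\), and then to translate between closedness in \(\Bisp \times \Bisp\) and closedness in \(\Bisp/\Gr \times \Bisp/\Gr\). Since a space \(Y\) is Hausdorff if and only if the diagonal \(\Delta_Y\) is closed in \(Y \times Y\), the claim reduces to showing that \(\Delta_{\Bisp/\Gr}\) is closed in \(\Bisp/\Gr \times \Bisp/\Gr\) if and only if its preimage \((p \times p)^{-1}(\Delta_{\Bisp/\Gr}) = \Bisp \times_{p,\Bisp/\Gr,p} \Bisp\) is closed in \(\Bisp \times \Bisp\).

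One direction is immediate from continuity: if \(\Bisp/\Gr\) is Hausdorff, then \(\Delta_{\Bisp/\Gr}\) is closed, and since \(p \times p\) is continuous, its preimage is closed in \(\Bisp \times \Bisp\).

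For the converse, the key input is \longref{Lemma}{lem:basic_orbit_lh}, which asserts that for a basic \(\Gr\)\nb-action the orbit projection \(p\colon \Bisp \to \Bisp/\Gr\) is a surjective local homeomorphism, hence an open surjection. The product \(p \times p\) is then also an open surjection, and in particular a quotient map. For a quotient map, a subset of the target is closed exactly when its preimage is closed. Applying this to \(\Delta_{\Bisp/\Gr}\) shows that if \(\Bisp \times_{p,\Bisp/\Gr,p} \Bisp\) is closed, then so is \(\Delta_{\Bisp/\Gr}\), whence \(\Bisp/\Gr\) is Hausdorff.

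The only subtle point is that products of quotient maps are generally not quotient maps, so one really must use that \(p\) is open (not just a quotient map). Once \longref{Lemma}{lem:basic_orbit_lh} is in hand, this is automatic, so there is no serious obstacle in the argument.
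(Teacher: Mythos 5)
Your proof is correct and is in substance the same as the paper's: both directions rest on exactly the same two ingredients, namely continuity of \(p\) for ``Hausdorff \(\Rightarrow\) closed'' and openness of \(p\) from \longref{Lemma}{lem:basic_orbit_lh} for the converse. You have merely repackaged the paper's explicit neighbourhood-chasing via the diagonal criterion for Hausdorffness and the fact that an open continuous surjection (hence \(p\times p\)) is a quotient map, and your remark that openness, not mere quotient-ness, of \(p\) is the essential point is well taken.
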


\begin{proof}
  Assume first that
  $\Bisp \times_{p,\Bisp/\Gr,p} \Bisp \subseteq \Bisp \times \Bisp$
  is closed.  Choose $y_1 \ne y_2$ in $\Bisp/\Gr$.  There are
  $x_1, x_2 \in \Bisp$ with $p(x_1) = y_1$ and $p(x_2) = y_2$.  Then
  $(x_1, x_2) \notin \Bisp \times_{p,\Bisp/\Gr,p} \Bisp$ because
  $x_1$ and~$x_2$ are not in the same orbit.  Since
  $\Bisp \times_{p,\Bisp/\Gr,p} \Bisp$ is closed, its complement is
  open.  This gives open neighbourhoods $U_1 \ni x_1$ and
  $U_2 \ni x_2$ with
  $(U_1 \times U_2) \cap (\Bisp \times_{p,\Bisp/\Gr,p} \Bisp) =
  \emptyset$.  Then $p(U_1) \cap p(U_2) = \emptyset$ by definition
  of $\Bisp \times_{p,\Bisp/\Gr,p} \Bisp$.  Since the
  $\Gr$\nb-action is basic, $p$ is open by
  \longref{Lemma}{lem:basic_orbit_lh}.  Hence $p(U_1)$ and $p(U_2)$ are open
  neighbourhoods that separate $y_1$ and $y_2$.  This shows
  that~$\Bisp/\Gr$ is Hausdorff.

  Conversely, assume that~$\Bisp/\Gr$ is Hausdorff.  Let
  $(x_1, x_2) \in (\Bisp \times \Bisp) \setminus (\Bisp
  \times_{p,\Bisp/\Gr,p} \Bisp)$.  Then $p(x_1) \neq p(x_2)$.  Since
  $\Bisp/\Gr$ is Hausdorff, there are open neighbourhoods
  $V_1 \ni p(x_1)$ and $V_2 \ni p(x_2)$ with
  $V_1 \cap V_2 = \emptyset$.  Then $p^{-1}(V_1) \times p^{-1}(V_2)$
  is an open neighbourhood of $(x_1, x_2)$ that does not meet
  $\Bisp \times_{p,\Bisp/\Gr,p} \Bisp$.  This shows that
  $\Bisp \times_{p,\Bisp/\Gr,p} \Bisp$ is closed in $\Bisp \times \Bisp$.
\end{proof}

\begin{lemma}
  \label{lem:free_proper_closed}
  For a free and proper action,
  $\Bisp \times_{p,\Bisp/\Gr,p} \Bisp \subseteq \Bisp \times \Bisp$
  is closed.
\end{lemma}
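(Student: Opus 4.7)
The plan is to identify the fibre product $\Bisp \times_{p,\Bisp/\Gr,p} \Bisp$ with the image of the action map~$f$ from~\eqref{eq:action_map} and then read the closedness off the definition of properness.

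First I would observe that a pair $(y,x) \in \Bisp \times \Bisp$ lies in $\Bisp \times_{p,\Bisp/\Gr,p} \Bisp$ if and only if $\Qu(y) = \Qu(x)$, which in turn holds if and only if there exists $g \in \Gr$ with $\s(x) = \rg(g)$ and $y = x \cdot g$.  Hence
\[
  \Bisp \times_{p,\Bisp/\Gr,p} \Bisp
  = f\bigl(\Bisp \times_{\s,\Gr^0,\rg} \Gr\bigr).
\]
So it suffices to show that the image of $f$ is closed in $\Bisp \times \Bisp$.

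Next I would invoke \longref{Definition}{def:proper_action} with~$Z$ a one-point space: then $f \times \id_Z$ is canonically identified with~$f$, and the hypothesis that this is closed tells us that $f$ itself is a closed map.  Since the domain $\Bisp \times_{\s,\Gr^0,\rg} \Gr$ is closed in itself, its image under~$f$ is closed in $\Bisp \times \Bisp$, which is exactly the claim.

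There is no substantive obstacle here; the proof is a direct unwinding of definitions.  Note that freeness of the action plays no role in this lemma: it is listed in the hypothesis because the eventual application is to compare ``free and proper'' actions with ``basic actions having Hausdorff orbit space'' (cf.\ \longref{Lemma}{lem:closed_vs_Hausdorff}), where freeness will be needed to ensure that $f$ is injective and hence a homeomorphism onto its closed image.
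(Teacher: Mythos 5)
Your proof is correct and takes essentially the same route as the paper: both identify $\Bisp \times_{p,\Bisp/\Gr,p} \Bisp$ with the image of the action map $f$ viewed as a map into $\Bisp\times\Bisp$ (the map called $F$ in~\eqref{eq:definition_of_F}), and conclude that this image is closed because a proper map is in particular closed (take $Z$ a point). Your remark that freeness is not actually needed is accurate: the paper justifies the identification of the image with the fibre product by saying ``because $f$ is bijective,'' but only surjectivity onto the fibre product is used, and that is automatic from the definition of the orbit equivalence relation.
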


\begin{proof}
  The composite map
  \begin{equation}
    \label{eq:definition_of_F}
    \begin{tikzcd}
      F\colon \Bisp \times_{\s, \Gr^0, \rg }
      \Gr \rar["f"]& \Bisp \times_{p,\Bisp/\Gr,p}
      \Bisp \rar[hook]& \Bisp \times \Bisp
    \end{tikzcd}
  \end{equation}
  is proper.  Hence \(F(\Bisp \times_{\s, \Gr^0, \rg} \Gr)\) is
  closed.  This is equal to
  \(\Bisp \times_{p,\Bisp/\Gr,p} \Bisp\) because~\(f\) is
  bijective.
\end{proof}

\begin{lemma}
  \label{lem:basic_to_proper}
  For a basic action, if
  $\Bisp \times_{p,\Bisp/\Gr,p} \Bisp \subseteq \Bisp \times \Bisp$
  is closed, then the map~$F$ defined in~\eqref{eq:definition_of_F}
  is proper.
\end{lemma}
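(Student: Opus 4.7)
The plan is to factor~$F$ through the homeomorphism provided by the basic action hypothesis, and then exhibit each factor as proper.

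First I would observe that because the action is basic, the map~$f$ of~\eqref{eq:action_map} is a homeomorphism from $\Bisp \times_{\s,\Gr^0,\rg} \Gr$ onto its image in $\Bisp\times\Bisp$. That image is exactly $\Bisp \times_{p,\Bisp/\Gr,p} \Bisp$: the inclusion $\subseteq$ is clear since $p(x\cdot g)=p(x)$, and the reverse inclusion is the defining property of~$\sim_\Gr$, namely $p(x_1)=p(x_2)$ gives some $g\in\Gr$ with $x_1=x_2\cdot g$, so $(x_1,x_2)=f(x_2,g)$. Thus~$F$ factors as the homeomorphism
\[
  f\colon \Bisp \times_{\s,\Gr^0,\rg} \Gr \congto \Bisp \times_{p,\Bisp/\Gr,p} \Bisp
\]
followed by the inclusion $j\colon \Bisp \times_{p,\Bisp/\Gr,p} \Bisp \hookrightarrow \Bisp\times\Bisp$, which by hypothesis is the inclusion of a closed subspace.

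To conclude, I would invoke two standard facts. Any homeomorphism is proper: for any space~$Z$, the map $f\times\id_Z$ is again a homeomorphism and hence closed. The inclusion of a closed subspace $A\subseteq X$ is proper: $j\times\id_Z\colon A\times Z\to X\times Z$ has image $A\times Z$, which is closed in $X\times Z$, and any subset closed in $A\times Z$ is then also closed in $X\times Z$. Since the composition of two proper maps is proper, $F=j\circ f$ is proper, which is the claim. There is no real obstacle here; the content of the lemma is precisely the observation that ``basic'' promotes the set-theoretic bijectivity of~$f$ onto the fibre product to a topological identification, so that closedness of the fibre product translates directly into properness of~$F$.
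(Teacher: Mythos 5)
Your proof is correct and follows essentially the same route as the paper's: the paper likewise notes that, by basicness, $F$ is a homeomorphism onto its (closed) image and that this property is preserved under taking products with any space~$Z$, so $F\times\id_Z$ is closed. Your version merely makes the factorisation $F=j\circ f$ and the two standard properness facts explicit, which is a harmless elaboration of the same idea.
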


\begin{proof}
  By definition, \(F\) is proper if and only if the map
  \(F\times \id_Z\) is closed for any topological space~\(Z\).
  Since the action is basic, $F$ is a homeomorphism onto its image,
  which is closed by assumption.  The property of being a
  homeomorphism onto a closed subset is preserved by taking products
  with any space~\(Z\).  Thus $F \times \id_Z$ is a closed map.
\end{proof}

\begin{proposition}
  \label{pro:basic_actions_free_proper}
  Let~\(\Gr\) be an étale groupoid and~\(\Bisp\) a right \(\Gr\)\nb-space.
  The following are equivalent:
  \begin{enumerate}
  \item the action of~\(\Gr\) on~\(\Bisp\) is basic and the orbit
    space~\(\Bisp/\Gr\) is Hausdorff;
  \item the action of~\(\Gr\) on~\(\Bisp\) is free and proper.
  \end{enumerate}
\end{proposition}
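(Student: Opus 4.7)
The plan is to assemble the three preceding lemmas \textup{(}\ref{lem:closed_vs_Hausdorff}, \ref{lem:free_proper_closed}, \ref{lem:basic_to_proper}\textup{)} together with one elementary point-set observation: a continuous injective closed map is a homeomorphism onto its image.  Throughout, I will view the map~\(f\) of~\eqref{eq:action_map} and the map~\(F\) of~\eqref{eq:definition_of_F} as the same function \(\Bisp \times_{\s,\Gr^0,\rg} \Gr \to \Bisp\times\Bisp\), so that ``\(F\) is proper'' and ``the action is proper'' coincide.

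For the implication \textup{(1)} \(\Rightarrow\) \textup{(2)}, assume the action is basic and the orbit space is Hausdorff.  Since~\(f\) is a homeomorphism onto its image, it is injective, so the action is free by \longref{Definition}{def:free_action}.  From Hausdorffness of~\(\Bisp/\Gr\) and basicness, \longref{Lemma}{lem:closed_vs_Hausdorff} gives that \(\Bisp \times_{p,\Bisp/\Gr,p}\Bisp\) is closed in \(\Bisp\times\Bisp\).  Then \longref{Lemma}{lem:basic_to_proper} yields that~\(F\), and hence~\(f\), is proper, which is the definition of a proper action.

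For the converse \textup{(2)} \(\Rightarrow\) \textup{(1)}, assume the action is free and proper.  Freeness means~\(f\) is injective, and properness means in particular that \(f = f\times \id_{\mathrm{pt}}\) is closed.  A continuous, injective, closed map is a homeomorphism onto its image with the subspace topology \textup{(}its inverse takes closed sets to closed sets because the original map does\textup{)}, so the action is basic.  Now \longref{Lemma}{lem:free_proper_closed} tells us that \(\Bisp\times_{p,\Bisp/\Gr,p}\Bisp\) is closed in \(\Bisp\times\Bisp\), and since the action is basic we may apply \longref{Lemma}{lem:closed_vs_Hausdorff} to conclude that \(\Bisp/\Gr\) is Hausdorff.

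There is no real obstacle here; the substance of the proposition has been absorbed into the three lemmas, and the only extra ingredient is the standard fact that a closed injective continuous map factors through a homeomorphism onto its image.  The one point that deserves attention is keeping straight that ``\(f\) is proper'' \textup{(}codomain \(\Bisp\times\Bisp\)\textup{)} and ``\(F\) is proper'' denote the same condition, so that the conclusion of \longref{Lemma}{lem:basic_to_proper} matches \longref{Definition}{def:proper_action}.
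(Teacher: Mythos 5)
Your proposal is correct and follows the paper's own proof essentially verbatim: both directions assemble Lemmas \ref{lem:closed_vs_Hausdorff}, \ref{lem:free_proper_closed} and \ref{lem:basic_to_proper} in the same order, and your ``closed injective continuous map is a homeomorphism onto its image'' step is exactly the paper's argument that \(f\), being closed and bijective onto its image, is open and hence has continuous inverse. The clarification that \(f\) and \(F\) denote the same map into \(\Bisp\times\Bisp\), so that properness of the action, of \(f\), and of \(F\) all coincide, is a sensible bookkeeping point and matches the paper's implicit usage.
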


\begin{proof}
  Suppose first that the $\Gr$\nb-action is basic and $\Bisp/\Gr$ is
  Hausdorff.  Since
  $f\colon \Bisp \times_{\s, \Gr^0, \rg} \Gr \to \Bisp
  \times_{p,\Bisp/\Gr,p} \Bisp$ is a homeomorphism, it must be
  injective.  That is, the $\Gr$\nb-action is free.  By
  \longref{Lemma}{lem:closed_vs_Hausdorff},
  $\Bisp \times_{p,\Bisp/\Gr,p} \Bisp \subseteq \Bisp \times \Bisp$
  is closed.  Then~$F$ is proper by \longref{Lemma}{lem:basic_to_proper}.
  That is, the $\Gr$\nb-action is proper.

  Conversely, suppose the $\Gr$\nb-action to be free and proper.  We
  first show that the $\Gr$\nb-action is basic.  Since~$f$ is
  clearly bijective, it remains to show that~$f^{-1}$ is continuous.
  Let~$U$ be open in $\Bisp \times_{\s, \Gr^0, \rg } \Gr$.  Then
  $(f^{-1})^{-1}(U) = f(U)$ is open since~$f$ is closed and
  bijective, so that~$f$ is open.  By
  \longref{Lemma}{lem:free_proper_closed},
  $\Bisp \times_{p,\Bisp/\Gr,p} \Bisp \subseteq \Bisp \times \Bisp$
  is closed.  Hence $\Bisp/\Gr$ is Hausdorff by
  \longref{Lemma}{lem:closed_vs_Hausdorff}.
\end{proof}

\section{Groupoid correspondences}
\label{sec:groupoid_corr}

\begin{definition}
  \label{def:groupoid_corr}
  Let \(\Gr[H]\) and~\(\Gr\) be étale groupoids.  An
  \textup{(}étale\textup{)} \emph{groupoid correspondence}
  from~\(\Gr\) to~\(\Gr[H]\), denoted
  \(\Bisp\colon \Gr[H]\leftarrow \Gr\), is a space~\(\Bisp\) with
  commuting actions of~\(\Gr[H]\) on the left and~\(\Gr\) on the
  right, such that the right anchor map \(\s\colon \Bisp\to \Gr^0\)
  is a local homeomorphism and the right \(\Gr\)\nb-action is
  free and proper.

  That the actions of \(\Gr[H]\) and~\(\Gr\) commute means that
  \(\s(h\cdot x)=\s(x)\), \(\rg(x\cdot g) = \rg(x)\), and \((h\cdot x)
  \cdot g = h\cdot (x\cdot g)\) for all \(g\in \Gr\), \(x\in\Bisp\),
  \(h\in \Gr[H]\) with \(\s(h)=\rg(x)\) and \(\s(x)=\rg(g)\), where
  \(\s\colon \Bisp\to\Gr^0\) and \(\rg\colon \Bisp\to\Gr[H]^0\) are
  the anchor maps.
\end{definition}

\begin{remark}
  \label{rem:groupoid_corr_locally_compact}
  Since~\(\Gr^0\) is locally compact and~\(\s\) is a local
  homeomorphism, the underlying space~\(\Bisp\) of a groupoid
  correspondence is locally compact as well.  The space~\(\Bisp\)
  itself need not be Hausdorff.
  \longref{Proposition}{pro:basic_actions_free_proper} implies, instead,
  that the space~\(\Bisp/\Gr\) is Hausdorff.
\end{remark}

\begin{definition}
  A correspondence \(\Bisp\colon \Gr[H]\leftarrow \Gr\) is
  \emph{proper} if the map \(\rg_*\colon \Bisp/\Gr\to \Gr[H]^0\)
  induced by~\(\rg\) is proper.  It is \emph{tight} if~$\rg_*$ is a
  homeomorphism.
\end{definition}

\begin{deflem}
  \label{def:innerprodofcorr}
  Let \(\Bisp\colon \Gr[H]\leftarrow \Gr\) be a groupoid
  correspondence.  Let \(\Qu\colon \Bisp\to \Bisp/\Gr\) be the orbit
  space projection.  The image of the map~\eqref{eq:action_map} is
  the subset
  \(\Bisp \times_{\Bisp/\Gr} \Bisp = \Bisp
  \times_{\Qu,\Bisp/\Gr,\Qu} \Bisp\) of all
  \((x_1,x_2)\in\Bisp\times\Bisp\) with \(\Qu(x_1)=\Qu(x_2)\).  The
  inverse to the map in~\eqref{eq:action_map} induces a continuous
  map
  \begin{equation}
    \label{eq:innprod_correspondence}
    \Bisp \times_{\Bisp/\Gr} \Bisp \congto
    \Bisp \times_{\s,\Gr^0,\rg} \Gr \xrightarrow{\pr_2} \Gr,\qquad
    (x_1,x_2)\mapsto \braket{x_2}{x_1}.
  \end{equation}
  That is, \(\braket{x_1}{x_2}\) is defined for \(x_1,x_2\in\Bisp\)
  with \(\Qu(x_1)=\Qu(x_2)\) in~\(\Bisp/\Gr\), and it is the unique
  \(g\in\Gr\) with \(\s(x_1) = \rg(g)\) and \(x_2 = x_1 g\).  The
  right \(\Gr\)\nb-action on~\(\Bisp\) is basic if and only if
  \(g\in\Gr\) with \(x_2= x_1 g\) for \(x_1,x_2\in\Bisp\) with
  \(\Qu(x_1)=\Qu(x_2)\) is unique and the resulting map
  \(\Bisp\times_{\Bisp/\Gr}\Bisp\to \Gr\),
  \((x_1,x_2)\mapsto\braket{x_1}{x_2}\), is continuous.
\end{deflem}

\begin{proof}
  The inverse to the map in~\eqref{eq:action_map} is of the form
  \[
    \Bisp\times_{\Bisp/\Gr}\Bisp \to \Bisp\times_{\s,\Gr^0,\rg}
    \Gr,\qquad
    (x_1,x_2)\mapsto (x_2,\braket{x_2}{x_1}).
  \]
  This is continuous if and only if the map
  in~\eqref{eq:innprod_correspondence} is continuous.
\end{proof}

The following proposition says that $\braket{x_1}{x_2}$ has
properties analogous to those of rank-one operators on Hilbert
modules, which justifies our notation.

\begin{proposition}
  \label{pro:innprod}
  Let \(\Bisp\colon \Gr[H]\leftarrow\Gr\) be a groupoid
  correspondence.  The map in~\eqref{eq:innprod_correspondence} is a
  local homeomorphism.  It has the following properties:
  \begin{enumerate}
  \item \label{en:innprod_1}%
    \(\rg(\braket{x_1}{x_2}) = \s(x_1)\),
    \(\s(\braket{x_1}{x_2}) = \s(x_2)\), and
    \(x_2 = x_1\cdot \braket{x_1}{x_2}\) for all \(x_1,x_2\in\Bisp\)
    with \(\Qu(x_1)= \Qu(x_2)\);
  \item \label{en:innprod_2}%
    \(\braket{x}{x} = 1\) for all \(x\in\Bisp\);
  \item \label{en:innprod_3}%
    \(\braket{x_1}{x_2} = \braket{x_2}{x_1}^{-1}\) for all
    \(x_1,x_2\in\Bisp\) with \(\Qu(x_1)= \Qu(x_2)\);
  \item \label{en:innprod_4}%
    \(\braket{h x_1 g_1}{h x_2 g_2} = g_1^{-1}
    \braket{x_1}{x_2} g_2\) for all \(h\in\Gr[H]\),
    \(x_1,x_2\in\Bisp\), \(g_1,g_2\in\Gr\) with
    \(\s(h) = \rg(x_1) = \rg(x_2)\),
    \(\s(x_1)=\rg(g_1)\), \(\s(x_2)=\rg(g_2)\),
    \(\Qu(x_1)=\Qu(x_2)\).
  \end{enumerate}
\end{proposition}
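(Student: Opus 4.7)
The plan is to derive everything from the defining characterisation in \longref{Definition and Lemma}{def:innerprodofcorr}: for $x_1,x_2\in\Bisp$ with $\Qu(x_1)=\Qu(x_2)$, the element $\braket{x_1}{x_2}$ is the unique $g\in\Gr$ satisfying $\rg(g)=\s(x_1)$ and $x_2=x_1\cdot g$.  Uniqueness here is freeness of the right $\Gr$\nb-action, which holds by \longref{Proposition}{pro:basic_actions_free_proper} because the action is free and proper by \longref{Definition}{def:groupoid_corr}.

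First I would verify the local homeomorphism claim by factoring the map in~\eqref{eq:innprod_correspondence} as the homeomorphism $\Bisp\times_{\Bisp/\Gr}\Bisp \congto \Bisp\times_{\s,\Gr^0,\rg}\Gr$ supplied by basicness, followed by the coordinate projection $\pr_2$.  The latter is the pullback of the local homeomorphism $\s\colon\Bisp\to\Gr^0$ along $\rg\colon\Gr\to\Gr^0$ and is therefore itself a local homeomorphism by \longref{Lemma}{lem:pullback_local_homeo}; the composite of a homeomorphism with a local homeomorphism is then a local homeomorphism.

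Items~(1)–(3) reduce to short applications of the defining property.  Writing $g\defeq\braket{x_1}{x_2}$, the identities $\rg(g)=\s(x_1)$ and $x_2=x_1\cdot g$ are part of the definition, and $\s(g)=\s(x_1\cdot g)=\s(x_2)$ follows from the first action axiom, proving~(1).  For~(2), the unit $\s(x)\in\Gr^0$ satisfies $x\cdot\s(x)=x$, so uniqueness forces $\braket{x}{x}=\s(x)$, the identity arrow on~$\s(x)$.  For~(3), multiplying $x_2=x_1\cdot g$ on the right by $g^{-1}$ and applying associativity together with $g\cdot g^{-1}=\rg(g)=\s(x_1)$ yields $x_1=x_2\cdot g^{-1}$, whence $\braket{x_2}{x_1}=g^{-1}=\braket{x_1}{x_2}^{-1}$ by uniqueness.

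For~(4), I would set $g\defeq\braket{x_1}{x_2}$ and observe that $(h\cdot x_1\cdot g_1)\cdot(g_1^{-1}\cdot g\cdot g_2) = h\cdot x_1\cdot g\cdot g_2 = h\cdot x_2\cdot g_2$, where the first equality uses $g_1\cdot g_1^{-1}=\rg(g_1)=\s(x_1)=\rg(g)$ together with associativity of the two commuting actions.  Uniqueness then yields $\braket{h\cdot x_1\cdot g_1}{h\cdot x_2\cdot g_2}=g_1^{-1}\cdot g\cdot g_2 = g_1^{-1}\braket{x_1}{x_2}g_2$.  No step is genuinely hard; the only real work is bookkeeping — checking repeatedly that source and range match so that every product, as well as the inverses $g_1^{-1}$ and $g^{-1}$, is defined — and the hypotheses of~(4) have been engineered precisely to make this work.
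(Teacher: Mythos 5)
Your proposal is correct and follows essentially the same route as the paper: the bracket map is factored as the basicness homeomorphism followed by the second projection, which is a local homeomorphism as a pullback of $\s$ by \longref{Lemma}{lem:pullback_local_homeo}, and properties (1)--(4) are read off from the uniqueness of the $g\in\Gr$ with $x_1\cdot g=x_2$ by exhibiting an explicit witness in each case. Nothing is missing.
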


\begin{proof}
  Since~$\s$ is a local homeomorphism, so is the second coordinate
  projection
  $\pi_2 \colon \Bisp \times_{\s, \Gr^0, \rg} \Gr \to \Gr$ by
  \longref{Lemma}{lem:pullback_local_homeo}.  This map composed with
  the homeomorphism
  \(\Bisp\times_{\Bisp/\Gr}\Bisp\cong\Bisp \times_{\s, \Gr^0, \rg}
  \Gr\) is the bracket map.  The properties of~\(\braket{x_1}{x_2}\)
  are checked by direct computations, using that for any
  \((x_1,x_2) \in \Bisp\times_{\Bisp/\Gr}\Bisp\), there is only one
  \(g\in\Gr\) with \(x_1\cdot g = x_2\), namely,
  $g=\braket{x_1}{x_2}$.  This equation forces $\rg(g) = \s(x_1)$
  and $\s(g) = \s(x_1 \cdot g) = \s(x_2)$, which
  gives~\ref{en:innprod_1}.  Since $x = x \cdot 1_{\s(x)}$, it
  implies~\ref{en:innprod_2}.  Since $x_1 = x_2 \cdot g$ if and only
  if $x_1 \cdot g^{-1} = x_2$, it implies~\ref{en:innprod_3}.  Since
  \((h x_1 g_1)\cdot (g_1^{-1} \braket{x_1}{x_2} g_2) = h x_1
  \braket{x_1}{x_2} g_2 = h x_2 g_2\), it
  implies~\ref{en:innprod_4}.
\end{proof}

\section{Some examples of groupoid correspondences}
\label{sec:groupoid_corr_examples}

In this section, we examine our definition of a groupoid
correspondence when both groupoids \(\Gr\) and~\(\Gr[H]\) are
locally compact spaces, discrete groups, or transformation groups.
We get topological graphs, self-similarities of groups, and
self-similarities of graphs in these three cases, respectively;
these objects have been used before in order to define
\(\Cst\)\nb-algebras.  More precisely, the self-similarities of
groups and graphs correspond to \emph{proper} groupoid
correspondences on groups and transformation groups, respectively.

Any locally compact space gives a groupoid with only identity
arrows.  We first describe groupoid correspondences between such
groupoids.

\begin{example}
  \label{exa:topological_graph}
  Let \(\Gr\) and~\(\Gr[H]\) be locally compact spaces, viewed as
  groupoids with only identity arrows.  A groupoid action of \(\Gr\)
  or~\(\Gr[H]\) is simply a continuous map to these two spaces.  The
  orbit space of an action on a space~\(\Bisp\) is again~\(\Bisp\).
  Any such action is basic.  By
  \longref{Proposition}{pro:basic_actions_free_proper}, the underlying
  space of a groupoid correspondence must be locally compact and
  Hausdorff.  Summing up, a groupoid correspondence
  \(\Bisp\colon \Gr[H]\leftarrow \Gr\) is the same as a locally
  compact, Hausdorff space~\(\Bisp\) with a continuous map
  \(\rg\colon \Bisp\to \Gr[H]\) and a local homeomorphism
  \(\s\colon \Bisp\to \Gr\).  The correspondence is proper if and
  only if \(\rg\colon \Bisp\to \Gr[H]\) is proper, and tight if and
  only if \(\rg\colon \Bisp\to\Gr[H]\) is a homeomorphism.  In the
  tight case, we may use~\(\rg\) to identify \(\Bisp=\Gr[H]\).  This
  gives an isomorphic groupoid correspondence with
  \(\rg=\id_{\Gr[H]}\).  Thus a tight groupoid correspondence
  \(\Gr[H]\leftarrow \Gr\) is equivalent to a local homeomorphism
  \(\Gr[H]\to \Gr\).

  A groupoid correspondence \((\Bisp,\rg,\s)\) as above with locally
  compact, Hausdorff \(\Bisp\), \(\Gr\) and~\(\Gr[H]\) is called a
  topological correspondence in~\cite{Albandik-Meyer:Product}.  If,
  in addition, \(\Gr[H]=\Gr\), then it is called a topological graph
  in~\cite{Katsura:class_I}; this is the data from which topological
  graph \(\Cst\)\nb-algebras are built.  Similar notions are
  also introduced by Deaconu~\cite{Deaconu:Continuous_graphs} and
  Nekrashevych~\cite{Nekrashevych:Combinatorial_models}, under the
  names ``continuous graph'' or ``topological automaton.''  These
  notions are meant to generalise non-invertible dynamical systems.

  There are two different ways to turn a local homeomorphism
  \(f\colon \Gr\to\Gr\) into a groupoid correspondence, namely,
  \(\rg=f\) and \(\s=\id_{\Gr}\) or \(\rg=\id_{\Gr}\) and \(\s=f\).
  Unless~\(f\) is a homeomorphism, the resulting topological graph
  \(\Cst\)\nb-algebras are not closely related.  So these two
  constructions must be distinguished carefully.
\end{example}

Next we describe groupoid correspondences between groups.

\begin{example}
  \label{exa:self-similar_groups}
  Let \(\Gr[H]\) and~\(\Gr\) be discrete groups.  A groupoid
  correspondence \(\Gr[H]\leftarrow \Gr\) is a space~\(\Bisp\) with
  commuting actions of~\(\Gr[H]\) on the left and~\(\Gr\) on the
  right, such that the right action is basic with Hausdorff orbit
  space and the right anchor map is a local homeomorphism.
  Since~\(\Gr^0\) is the one-point set, the anchor map
  \(\s\colon \Bisp\to\Gr^0\) is a local homeomorphism if and only
  if~\(\Bisp\) is discrete.  In this case, the right
  \(\Gr\)\nb-action is basic if and only if it is free.  Thus a
  groupoid correspondence \(\Bisp\colon \Gr[H]\leftarrow \Gr\) is a
  set~\(\Bisp\) with the discrete topology and with commuting
  actions of~\(\Gr[H]\) on the left and~\(\Gr\) on the right, where
  the right action is free.

  Let \(A\defeq \Bisp/\Gr\).  Since the right action is free, there is
  a bijection \(A\times \Gr\cong \Bisp\) such that the right
  \(\Gr\)\nb-action on~\(\Bisp\) becomes \((x,g_1)\cdot g_2
  = (x,g_1 g_2)\) on~\(A\times \Gr\).  We transfer the left
  \(\Gr[H]\)\nb-action to~\(A\times \Gr\) using this bijection.  Thus
  \(A\times \Gr\) becomes a groupoid correspondence \(\Gr[H]\leftarrow
  \Gr\) that is isomorphic to~\(\Bisp\).  The left action of \(h\in
  \Gr[H]\) on~\(A\times \Gr\) must be of the form
  \[
    h\cdot (x,g) = (\pi_h(x),h|_x\cdot g)
  \]
  for some map \(\pi_h\colon A\to A\) and some \(h|_x\in \Gr\)
  because it commutes with the right \(\Gr\)\nb-action.  The property
  \((h_1\cdot h_2)\cdot (x,g)= h_1\cdot (h_2\cdot (x,g))\) of a left
  action is equivalent to the following two conditions.  First,
  \(h\mapsto \pi_h\) must be a group action of~\(\Gr[H]\) on~\(A\).
  Secondly, the map \(\varphi\colon \Gr[H]\times A\to\Gr\),
  \((h,x)\mapsto h|_x\), must be a \(1\)\nb-cocycle, that is,
  \[
    (h_1 h_2)|_x = h_1|_{h_2 x} \cdot h_2|_x
  \]
  for all \(h_1,h_2\in \Gr[H]\), \(x\in A\).

  The bijection \(\Bisp\cong A\times\Gr\) is unique only up to a map
  of the form \((x,g)\mapsto (x,\psi(x)\cdot g)\) for some map
  \(\psi\colon A\to\Gr\).  This does not change the action~\(\pi\)
  and replaces the cocycle~\(\varphi\) by
  \begin{equation}
    \label{eq:cocycle_conjugation}
    \varphi^\psi(h,x) \defeq
    \psi(\pi_h(x))^{-1}\cdot \varphi(h,x)\cdot \psi(x)
  \end{equation}
  because \((\pi_h(x),h|_x\cdot \psi(x)\cdot g) =
  (\pi_h(x),\psi(\pi_h(x)) \cdot \varphi^\psi(h,x)\cdot g)\).  The map
  \(\psi\mapsto \varphi^\psi\) defines a right
  action of the group of maps \(\psi\colon A\to\Gr\) on the set of
  \(1\)\nb-cocycles \(\varphi\colon \Gr[H]\times A\to\Gr\).  The
  \(\Gr[H]\)\nb-space~\(A\) and the orbit of~\(\varphi\) under this
  action are uniquely determined by the isomorphism class of the
  groupoid correspondence \(\Gr[H]\leftarrow \Gr\), and any such pair
  \((A,\varphi)\) comes from a groupoid correspondence
  \(\Gr[H]\leftarrow \Gr\).

  Since~\(\Gr[H]^0\) is the one-point set as well, a groupoid
  correspondence is proper if and only if the set \(A\cong \Bisp/\Gr\)
  is finite, and tight if and only if \(\abs{A}=1\) or, equivalently,
  the right \(\Gr\)\nb-action on~\(\Bisp\) is transitive.  In the
  tight case, the construction above identifies \(\Bisp\cong \Gr\) as
  a right \(\Gr\)\nb-space by picking a base point in~\(\Bisp\).  The
  left action must be of the form \(h\cdot g = \varphi(h) g\) for a
  group homomorphism \(\varphi\colon \Gr[H]\to \Gr\) in order to
  commute with the right \(\Gr\)\nb-action.  If we pick another base
  point \(g\in\Gr\cong \Bisp\), then the homomorphism~\(\varphi\) is
  replaced by \({\Ad_g^{-1}}\circ \varphi\)
  by~\eqref{eq:cocycle_conjugation}.  Thus isomorphism classes of
  tight groupoid correspondences \(\Gr[H]\leftarrow \Gr\) are
  canonically in bijection with equivalence classes of group
  homomorphisms \(\Gr[H]\to \Gr\), where we consider two homomorphisms
  equivalent if they differ by an inner automorphism of~\(\Gr\).

  An injective group homomorphism \(\varphi\colon \Gr\to\Gr[H]\) gives
  a tight groupoid correspondence \(\Gr\leftarrow \Gr[H]\).  But it
  also gives a groupoid correspondence \(\Gr[H]\leftarrow \Gr\) by
  taking~\(\Gr[H]\) with the left \(\Gr[H]\)\nb-action by translation
  and the right \(\Gr\)\nb-action \(h\cdot g \defeq h\varphi(g)\).
  Such groupoid correspondences for \(\Gr=\Gr[H]\) are implicitly used
  by Stammeier~\cite{Stammeier:Irreversible}.  If \(\Gr\)
  and~\(\Gr[H]\) are Abelian discrete groups, an injective group
  homomorphism \(\varphi\colon \Gr\to\Gr[H]\) is equivalent to a
  surjective group homomorphism \(\hat\varphi\colon
  \hat{\Gr[H]}\to\hat{\Gr}\).  These are used by Cuntz and
  Vershik~\cite{Cuntz-Vershik:Endomorphisms}.
\end{example}

The analysis in \longref{Example}{exa:self-similar_groups} shows that a
proper groupoid correspondence \(\Gr\leftarrow \Gr\)
for a group~\(\Gr\)
is the same as a ``covering permutational bimodule'' over~\(\Gr\)
in the notation of \cite{Nekrashevych:Cstar_selfsimilar}*{Section~2}.
These covering permutational bimodules are another way to describe
self-similarities of groups.  It is customary, however, to assume a
certain faithfulness property for self-similarities (see
\cite{Nekrashevych:Cstar_selfsimilar}*{Definition~2.1}).  To formulate
it, let~\(A^*\)
be the set of finite words over~\(A\).
The \(1\)\nb-cocycle
allows to extend the action of~\(\Gr\)
on~\(A\) to an action on~\(A^*\) by the recursive formula
\begin{equation}
  \label{selfsimilaraction}
  g\cdot (x w) = g(x) (g|_x\cdot w)
\end{equation}
for \(g\in \Gr\), \(x\in A\), \(w\in A^*\).  The triple consisting
of the group~\(\Gr\), the \(\Gr\)\nb-set~\(A\), and the
\(1\)\nb-cocycle \(\Gr\times A\to \Gr\) is called a
\emph{self-similar group} if~\(A\) is finite and the action
of~\(\Gr\) on~\(A^*\) defined above is faithful.  The latter
condition ensures that a self-similarity is sufficiently nontrivial.
We shall not use it in this article.

The relationship to self-similar groups suggests to view a proper
correspondence \(A\colon \Gr\leftarrow \Gr\)
for a groupoid~\(\Gr\)
as a self-similarity of~\(\Gr\).
What does this mean for a transformation groupoid \(\Gamma\ltimes V\),
where~\(\Gamma\) is a group and~\(V\) is a left \(\Gamma\)\nb-set?

\begin{proposition}
  \label{pro:self-similar_graph}
  Let~\(\Gamma\)
  be a discrete group and let~\(V\)
  be a left \(\Gamma\)\nb-set.
  Let \(\Gr \defeq \Gamma\ltimes V\)
  be the transformation groupoid.  Let~\(E\)
  be a left\/ \(\Gamma\)\nb-set
  with a \(1\)\nb-cocycle
  \(\varphi\colon \Gamma\times E\to \Gamma\),
  \((h,e)\mapsto h|_e\),
  that is, \((g h)|_e = g|_{h\cdot e} \cdot h|_e\)
  for all \(g,h\in \Gamma\),
  \(e\in E\)
  and with maps \(\rg,\s\colon E\rightrightarrows V\) that satisfy
  \begin{equation}
    \label{eq:s_r_self-similar_trafo}
    \s(g\cdot e) = (g|_e)\cdot \s(e)\quad\text{and}\quad
    \rg(g\cdot e) = g\cdot \rg(e)
  \end{equation}
  for all \(g\in \Gamma\), \(e\in E\).  Then
  \(\Bisp\defeq E\times \Gamma\) with the discrete topology, the
  anchor maps \(\rg,\s\colon \Bisp\rightrightarrows V\),
  \(\rg(e,g) = \rg(e)\), \(\s(e,g) = g^{-1}\cdot \s(e)\), with the
  obvious right \(\Gamma\)\nb-action,
  \((e,g)\cdot g_2 = (e, g\cdot g_2)\), and the left
  \(\Gamma\)\nb-action \(h\cdot (e,g) = (h\cdot e, h|_e\cdot g)\) is
  a groupoid correspondence \(\Gr \leftarrow \Gr\).  Any groupoid
  correspondence \(\Gr\leftarrow\Gr\) is isomorphic to one of this
  form, where \((E,\rg,\s)\) is unique up to isomorphism,
  and~\(\varphi\) is unique up to the action of the group of maps
  \(\psi\colon E\to \Gamma\) by
  \[
    \varphi^\psi (h,e) \defeq \psi(\pi_h(e))^{-1}\cdot
    \varphi(h,e)\cdot \psi(e).
  \]
  The correspondence~\(\Bisp\) is proper if and only if the map
  \(\rg\colon E\to V\) is finite-to-one, and tight if and only if
  the map \(\rg\colon E\to V\) is bijective.
\end{proposition}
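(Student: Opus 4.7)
My plan is to follow the template of \longref{Example}{exa:self-similar_groups}, with the orbit space \(E \defeq \Bisp/\Gr\) now playing the role of the alphabet~\(A\). The first observation to record is that \(V = \Gr^0\) is discrete, so since \(\s\colon \Bisp\to V\) is a local homeomorphism, the underlying space of any correspondence \(\Bisp\colon \Gr\leftarrow \Gr\) is automatically discrete. Next, right multiplication on~\(\Bisp\) by an arrow \((g_2, v) \in \Gamma \ltimes V\) (admissible precisely when \(v = \s(x)\)) reduces to right multiplication by \(g_2 \in \Gamma\), so the right \(\Gr\)\nb-orbits coincide with the right \(\Gamma\)\nb-orbits and the latter action is free. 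Choosing one representative in each right \(\Gamma\)\nb-orbit I obtain a bijection \(E \times \Gamma \congto \Bisp\), \((e,g)\mapsto e \cdot g\), through which I will transport all remaining structure.

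Under this identification one reads off \(\rg(e,g) = \rg(e)\) and \(\s(e, g) = g^{-1}\cdot \s(e)\), which defines the maps \(\rg,\s\colon E\rightrightarrows V\) of the statement. The left \(\Gr\)\nb-action commutes with the right \(\Gamma\)\nb-action, so it is determined by its restriction to \(E\times\{1\}\): for \(h\in \Gamma\) and \(e\in E\), freeness of the right action produces a unique \(\pi_h(e)\in E\) and a unique \(h|_e \in \Gamma\) such that \((h,\rg(e))\cdot e = \pi_h(e) \cdot h|_e\). The anchor conditions for the left action then force \(\rg(\pi_h(e)) = h\cdot \rg(e)\) and \(\s(\pi_h(e)) = h|_e\cdot \s(e)\), matching~\eqref{eq:s_r_self-similar_trafo}, while associativity of the left action unpacks into the two statements that \(\pi\) is a \(\Gamma\)\nb-action on~\(E\) and that \(\varphi(h,e)\defeq h|_e\) is a \(1\)\nb-cocycle. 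Conversely, given the data in the statement, the same identities are exactly what is needed to verify that the prescribed formulas define commuting actions on \(E\times\Gamma\); the right action is manifestly free with discrete quotient~\(E\) (hence Hausdorff), and basicness is evident from the explicit inverse to~\eqref{eq:action_map} sending \(((e,g'),(e,g))\) to \(((e,g),(g^{-1} g',\,g^{-1}\s(e)))\), so \longref{Proposition}{pro:basic_actions_free_proper} delivers freeness and properness of the right action.

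For the uniqueness statement, the orbit space~\(E\), the maps \(\rg,\s\) it carries, and the action~\(\pi\) are intrinsic to~\(\Bisp\); only the cocycle~\(\varphi\) depends on the choice of representatives, and substituting \(e\cdot \psi(e)\) for~\(e\) for some \(\psi\colon E\to\Gamma\) conjugates it to~\(\varphi^\psi\) exactly as in~\eqref{eq:cocycle_conjugation} by a direct calculation using commutativity of the two actions. Finally, since \(E\) and~\(V\) are both discrete, the induced map \(\rg_*\colon E\to V\) is proper exactly when it is finite\nb-to\nb-one, and is a homeomorphism (tightness) exactly when it is a bijection. The main obstacle throughout is bookkeeping rather than depth: one must cleanly separate the intrinsic data (\(E\), \(\rg\), \(\s\), \(\pi\)) from the representative\nb-dependent cocycle, and verify that the compatibility conditions~\eqref{eq:s_r_self-similar_trafo} are precisely what promote a cocycle on an orbit set into a genuine left groupoid action on \(E\times\Gamma\).
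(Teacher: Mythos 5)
Your proposal is correct and follows essentially the same route as the paper's proof: discreteness of \(\Bisp\) from the local homeomorphism \(\s\colon\Bisp\to V\), freeness of the right \(\Gamma\)\nb-action, choice of a fundamental domain giving \(\Bisp\cong E\times\Gamma\), the forced form of the left action and the resulting cocycle and anchor-map conditions, uniqueness up to the choice of fundamental domain, and the discrete characterisations of proper and tight. The only cosmetic difference is that you exhibit an explicit inverse to the action map to verify basicness, whereas the paper simply notes that on a discrete space a free action is automatically basic; both are fine.
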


\begin{proof}
  An action of \(\Gamma\ltimes V\)
  on~\(\Bisp\)
  is equivalent to a pair consisting of a \(\Gamma\)\nb-action
  on~\(\Bisp\)
  and a \(\Gamma\)\nb-equivariant
  map \(\Bisp\to V\).
  Thus a groupoid correspondence \(\Bisp\colon \Gr\leftarrow \Gr\)
  is a space with commuting left and right actions of~\(\Gamma\)
  and with anchor maps \(\rg,\s\colon \Bisp\rightrightarrows V\),
  with some extra properties.  Since~\(V\)
  is discrete and \(\s\colon \Bisp\to V\)
  is a local homeomorphism, \(\Bisp\)
  must be discrete.  Then the right \(\Gamma\ltimes V\)\nb-action
  is basic if and only if the right \(\Gamma\)\nb-action
  is free.  Choose a fundamental domain \(E\subseteq\Bisp\)
  for it.  Then the map \(E\times \Gamma\to\Bisp\),
  \((e,g)\mapsto e\cdot g\),
  is a homeomorphism.  We use it to identify~\(\Bisp\)
  with~\(E\times \Gamma\).
  Then the right \(\Gamma\)\nb-action
  becomes \((e,g_1)\cdot g_2 = (e,g_1\cdot g_2)\).
  The anchor maps satisfy
  \(\s(e,g) = \s(e\cdot g) = g^{-1}\cdot \s(e)\)
  and \(\rg(e,g) = \rg(e\cdot g) = \rg(e)\)
  for all \(e\in E\),
  \(g\in \Gamma\)
  because~\(\s\)
  is equivariant and~\(\rg\)
  is invariant for the right \(\Gamma\)\nb-action.
  Thus \(\s\)
  and~\(\rg\)
  are determined by their restrictions to~\(E\),
  which we also denote by \(\s\) and~\(\rg\).

  For \(e\in E\),
  we may write \(h\cdot (e,1) = (h\cdot e, h|_e)\)
  with \(h\cdot e\in E\),
  \(h|_e\in \Gamma\).
  As in \longref{Example}{exa:self-similar_groups}, the left action
  of~\(\Gamma\)
  on~\(\Bisp\)
  must be of the form \(h\cdot (e,g) = (h \cdot e,h|_e \cdot g)\)
  because it commutes with the right \(\Gamma\)\nb-action;
  and this gives a left \(\Gamma\)\nb-action
  on~\(E\times \Gamma\)
  if and only if \((h_1\cdot h_2)\cdot e = h_1\cdot (h_2\cdot e)\)
  and \((h_1 h_2)|_e = h_1|_{h_2\cdot e} \cdot h_2|_e\)
  for all \(h_1,h_2\in \Gamma\),
  \(e\in E\).
  That is, \(E\)
  is a left \(\Gamma\)\nb-space
  and the map \(\varphi\colon \Gamma\times E\to \Gamma\),
  \((h,e)\mapsto h|_e\), is a \(1\)\nb-cocycle.

  The map~\(\rg\)
  is equivariant for the left \(\Gamma\)\nb-action.  Hence
  \[
    \rg(h\cdot e)
    = \rg(h\cdot e,h|_e\cdot g)
    = \rg(h\cdot (e,g))
    = h\cdot \rg(e,g)
    = h\cdot \rg(e)
  \]
  for all \(g,h\in \Gamma\),
  \(e\in E\).
  The map \(\s\)
  is invariant for the left \(\Gamma\)\nb-actions.  Hence
  \[
    g^{-1}\cdot (h|_e)^{-1} \cdot \s(h\cdot e)
    = \s(h \cdot e,h|_e \cdot g)
    = \s(h \cdot (e,g))
    = \s(e,g)
    = g^{-1} \cdot \s(e)
  \]
  for all \(h,g\in \Gamma\),
  \(e\in E\).
  Equivalently, \(\s(h\cdot e) = h|_e \cdot \s(e)\)
  for all \(h\in \Gamma\),
  \(e\in E\).
  So \(\rg\)
  and~\(\s\)
  satisfy the two conditions in~\eqref{eq:s_r_self-similar_trafo}.

  By now, we have seen that any correspondence is of the asserted
  form.  The only choice in the construction is that of a fundamental
  domain for the free \(\Gamma\)\nb-action
  on~\(\Bisp\).
  Two such choices differ by right multiplication with a map
  \(E\to \Gamma\).
  Hence isomorphisms of groupoid correspondences are in bijection with
  pairs \((f,\psi)\),
  where~\(f\)
  is a bijection \(f\colon E\congto E'\)
  that intertwines the range and source maps and
  \(\psi\colon E\to \Gamma\)
  is such that \(\varphi'(h,f(e)) = \varphi^\psi(h,e)\)
  for all \(h\in \Gamma\),
  \(e\in E\).
  Since \(E\cong \Bisp/\Gr\)
  and both \(E\)
  and~\(V\)
  are discrete, a groupoid correspondence is tight or proper,
  respectively, if and only if the corresponding map
  \(\rg\colon E\to V\) is bijective or finite-to-one.
\end{proof}

It is possible, though not recommended, to view the space~\(E\) in
\longref{Proposition}{pro:self-similar_graph} as a directed graph
with vertex set~\(V\) and range and source maps~\(\rg,\s\).  The
group~\(\Gamma\) acts both on the vertices and the edges in this
graph, and~\(\rg\) is equivariant.  But the map \(\s\colon E\to V\)
is not \(\Gamma\)\nb-equivariant, so~\(\Gamma\) does not act by
graph automorphisms.  The extra condition
\(\s(g\cdot e) = g\cdot \s(e)\), which says that the
\(\Gamma\)\nb-action preserves the graph structure, is equivalent to
\(g|_e\cdot \s(e) = g\cdot \s(e)\) for all \(g\in \Gamma\),
\(e\in E\).  Exel and Pardo~\cite{Exel-Pardo:Self-similar} define a
self-similar graph as a graph with an action of~\(\Gamma\) by graph
automorphisms, such that \(g|_e\cdot x = g\cdot x\) for all
\(g\in \Gamma\), \(e\in E\), \(x\in V\) (see
\cite{Exel-Pardo:Self-similar}*{Equation (2.3.1)}).  Thus a
self-similar graph as in~\cite{Exel-Pardo:Self-similar} gives a
groupoid correspondence on the transformation
groupoid~\(\Gamma\ltimes V\).  The converse is not true, however.
The assumption \(\s(g\cdot e) = (g|_e)\cdot \s(e)\) is also found in
\cite{Laca-Raeburn-Ramagge-Whittaker:Equilibrium_self-similar_groupoid}*{Lemma~3.4}
in the context of self-similar actions of groupoids.

We suggest
that the right common generalisation of graph \(\Cst\)\nb-algebras
and Nekrashevych's \(\Cst\)\nb-algebras is a proper groupoid
correspondence \(\Gr\leftarrow\Gr\) for a discrete groupoid~\(\Gr\).
The idea of~\cite{Exel-Pardo:Self-similar} to look at self-similar
graphs restricts~\(\Gr\) to be a transformation
groupoid~\(\Gamma\ltimes V\) for a group action on a discrete set
--~the vertices of the graph~-- and it leads to the unnecessary
condition \(\s(g\cdot e) = g\cdot \s(e)\) on the source map
\(\s\colon E\to V\) in order for~\(\Gamma\) to act by graph
automorphisms.  We now show that the setting
in~\cite{Laca-Raeburn-Ramagge-Whittaker:Equilibrium_self-similar_groupoid}
is almost equivalent to that of a groupoid correspondence on a
discrete groupoid, except for an extra faithfulness condition
in~\cite{Laca-Raeburn-Ramagge-Whittaker:Equilibrium_self-similar_groupoid},
which is analogous to the faithfulness condition in the definition
of a self-similar group.

\begin{example}
  \label{exa:self-similar_groupoid}
  Let~\(\Gr\) be a (discrete) groupoid with object set~\(V\).  Let
  \(\Bisp\colon \Gr\leftarrow\Gr\) be a groupoid correspondence.  As
  above, the space~\(\Bisp\) is discrete and there is a fundamental
  domain~\(E\) with a map \(\s\colon E\to V\) for the right
  \(\Gr\)\nb-action on it such that
  \(\Bisp \cong E\times_{\s,V,\rg}\Gr\) as a right \(\Gr\)\nb-set.
  The left action of~\(\Gr\) induces an action on
  \(E\cong \Bisp/\Gr\).  Let \(\rg\colon E\to V\) denote its anchor
  map.  There is a map
  \(\varphi\colon \Gr\times_{\s,V,\rg} E\to\Gr\),
  \((g,e)\mapsto g|_e\), such that \(\s(g|_e) = \s(e)\) and
  \(g\cdot (e,h) = (g\cdot e, g|_e\cdot h)\) for all
  \(g,h \in \Gr\), \(e\in E\) with \(\s(g) = \rg(e)\),
  \(\s(e) = \rg(h)\).  The maps above must satisfy
  \(\s(g|_e) = \s(e)\), \(\rg(g\cdot e) = \rg(g)\),
  \(\s(g\cdot e) = \rg(g|_e)\), and the cocycle condition
  \((g\cdot h)|_e = g|_{h\cdot e} \cdot h|_e\) for all
  \(g,h\in\Gr\), \(e\in E\) with \(\s(g) =\rg(h)\),
  \(\s(h) = \rg(e)\) in order for
  \(g\cdot (e,h) = (g\cdot e, g|_e\cdot h)\) to define a groupoid
  action on~\(E\times_{\s,V,\rg}\Gr\).  Conversely, if we are given
  a \(\Gr\)\nb-set~\(E\) with a cocycle satisfying these conditions,
  then it comes from a unique groupoid correspondence.  So the only
  difference between a groupoid correspondence on~\(\Gr\) and a
  self-similar action of~\(\Gr\) as in
  \cite{Laca-Raeburn-Ramagge-Whittaker:Equilibrium_self-similar_groupoid}*{Definition~3.3}
  is the assumption
  in~\cite{Laca-Raeburn-Ramagge-Whittaker:Equilibrium_self-similar_groupoid}
  that the induced action of~\(\Gr\) on the space of finite paths is
  faithful.
\end{example}

\section{Composition of groupoid correspondences}
\label{sec:composition}

We are ready to define the composition of groupoid correspondences
and see that these form a bicategory.  Let \(\Gr[H]\), \(\Gr\)
and~\(\Gr[K]\) be étale groupoids and let
\(\Bisp\colon \Gr[H]\leftarrow \Gr\) and
\(\Bisp[Y]\colon \Gr\leftarrow \Gr[K]\) be groupoid correspondences.
Let
\[
  \Bisp\times_{\Gr^0}\Bisp[Y] \defeq
  \Bisp\times_{\s,\Gr^0,\rg}\Bisp[Y]
  \defeq \setgiven{(x,y)\in \Bisp\times \Bisp[Y]}{\s(x)=\rg(y)}.
\]
Let~\(\Gr\) act on \(\Bisp\times_{\Gr^0}\Bisp[Y]\) by the
\emph{diagonal action}
\[
  g\cdot (x,y)\defeq (x\cdot g^{-1},g\cdot y)
\]
for \(x\in\Bisp\), \(y\in\Bisp[Y]\) and \(g\in \Gr\) with
\(\s(g)=\rg(y) = \s(x)\).  Let \(\Bisp\Grcomp_{\Gr}\Bisp[Y]\) be the
orbit space of this action.  The image of
\((x,y)\in \Bisp\times_{\Gr^0}\Bisp[Y]\) in
\(\Bisp\Grcomp_{\Gr}\Bisp[Y]\) is usually denoted by \([x,y]\).

The maps \(\rg(x,y)\defeq \rg(x)\) and \(\s(x,y)\defeq \s(y)\) on
\(\Bisp\times_{\Gr^0}\Bisp[Y]\) are invariant for this action and
thus induce maps
\(\rg\colon \Bisp\Grcomp_{\Gr}\Bisp[Y]\to \Gr[H]^0\) and
\(\s\colon \Bisp\Grcomp_{\Gr}\Bisp[Y]\to \Gr[K]^0\).  These are the
anchor maps for the commuting actions of~\(\Gr[H]\) on the left
and~\(\Gr[K]\) on the right, which we define by
\[
  h\cdot[x,y]\defeq [h \cdot x,y],\qquad
  [x,y]\cdot k\defeq [x,y\cdot k]
\]
for all \(h\in \Gr[H]\), \(x\in\Bisp\), \(y\in\Bisp[Y]\),
\(k\in\Gr[K]\) with \(\s(h) = \rg(x)\), \(\s(x)=\rg(y)\), and
\(\s(y) = \rg(k)\).  This is well defined because
\([h\cdot x\cdot g^{-1},g\cdot y] = [h\cdot x,y]\) and
\([x\cdot g^{-1},g\cdot y\cdot k] = [x,y\cdot k]\) for \(g\in\Gr\)
with \(\s(g) = \s(x) = \rg(y)\).

We are going to prove that \(\Bisp\Grcomp_{\Gr}\Bisp[Y]\) with these
two actions is again a groupoid correspondence.  The following
lemmas are needed for this.  In some of the statements, we use the
construction of \(\Bisp\Grcomp_{\Gr} \Bisp[Y]\) also when
\(\Bisp[Y]\) is merely a left \(\Gr\)\nb-space, but there is no
groupoid~\(\Gr[K]\) that acts on~\(\Bisp[Y]\) on the right.  Then
\(\Bisp\Grcomp_{\Gr} \Bisp[Y]\) is still a left \(\Gr[H]\)\nb-space.

\begin{lemma}
  \label{lem:pullback_proper}
  The pullback of a proper map is also a proper map.
\end{lemma}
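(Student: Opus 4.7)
The plan is to work directly from the closed-product definition of proper in \longref{Definition}{def:proper_action}: for $f\colon X\to Y$ proper and a continuous map $\alpha\colon A\to Y$, I need to show that the pullback projection $g\colon A\times_Y X\to A$ satisfies $g\times \id_Z$ closed for every space~$Z$.

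My first step is a formal reduction. The canonical homeomorphism $(A\times_Y X)\times Z\cong (A\times Z)\times_Y X$, with $A\times Z$ mapping to $Y$ via $\alpha\circ\pi_A$, identifies $g\times \id_Z$ with the pullback of~$f$ along this new continuous map. Hence it suffices to prove the a~priori weaker statement that any single pullback of a proper map is already closed; applying this once for each~$Z$ then yields properness of~$g$.

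For the core step, I would fix a closed subset $C\subseteq A\times_Y X$ and let $\bar C$ denote its closure in $A\times X$, so that $C=\bar C\cap (A\times_Y X)$. The key device is the map $\id_A\times f\colon A\times X\to A\times Y$, which is closed because~$f$ is proper (taking the auxiliary factor in \longref{Definition}{def:proper_action} to be~$A$). Thus $K\defeq (\id_A\times f)(\bar C)$ is closed in $A\times Y$. For any $a_0\notin g(C)$, the point $(a_0,\alpha(a_0))$ must lie outside~$K$, because a witness $x\in X$ with $(a_0,x)\in\bar C$ and $f(x)=\alpha(a_0)$ would force $(a_0,x)\in \bar C\cap(A\times_Y X)=C$, contradicting $a_0\notin g(C)$. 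A basic open neighbourhood $U\times W$ of $(a_0,\alpha(a_0))$ in the open complement of~$K$ then provides the open neighbourhood $U\cap\alpha^{-1}(W)\subseteq A$ of $a_0$ disjoint from $g(C)$: any $a$ in this set with a witness $(a,x)\in C$ would place $(a,f(x))$ in the empty intersection $K\cap (U\times W)$.

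The main obstacle I anticipate is spotting the right auxiliary closed map; once one thinks of pushing $\bar C$ forward via $\id_A\times f$, the rest is a short manipulation of basic opens in the product topology. It is worth emphasising that no separation hypothesis on~$Y$ enters at any stage, which matches the paper's need to work with non-Hausdorff groupoids and groupoid correspondences.
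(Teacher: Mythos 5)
Your proof is correct and follows essentially the same route as the paper's: both hinge on the closedness of $\id_A \times f$ (properness of $f$ applied with auxiliary factor $A$) together with the graph embedding $a \mapsto (a,\alpha(a))$, which identifies $g(C)$ with the preimage of a closed subset of $A\times Y$. Your preliminary reduction via $(A\times_Y X)\times Z \cong (A\times Z)\times_Y X$ is a clean way to dispense with the auxiliary factor~$Z$, which the paper instead carries through the whole argument.
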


\begin{proof}
  We form the pullback of two continuous maps $\alpha\colon A \to C$
  and $\beta\colon B \to C$ as in~\eqref{pullback}.  We
  assume~\(\beta\) to be proper, that is, stably closed, and want to
  prove the same for the pullback map
  \(\pi_1\colon A\times_C B \to A\).  Let~$X$ be another space.
  Then the map map
  $\id_{X \times A}\times \beta \colon X \times A \times B \to X
  \times A \times C$ is closed because~\(\beta\) is proper.  The
  space \(X\times A\) is homeomorphic to a subspace of
  \(X\times A \times C\) by the embedding
  \(j\colon X\times A \to X\times A \times C\),
  \((x,a)\mapsto (x,a,\alpha(a))\); this is an embedding because the
  projection to the first two coordinates is a one-sided inverse.
  The space \(X\times A \times_{\alpha,C,\beta} B\) of all
  \((x,a,b)\) with \(\alpha(a) = \beta(b)\) is the preimage of
  \(j(X\times A)\) under \(\id_{X \times A}\times \beta\), and
  \(\id_X \times \pi_1\colon X\times A \times_{\alpha,C,\beta} B \to
  X\times A\) is the restriction of \(\id_{X \times A}\times \beta\)
  to this preimage, composed with~\(j^{-1}\).  A subset of
  \(X\times A \times_{\alpha,C,\beta} B\) is closed if and only if
  it is of the form \(D\cap (X\times A \times_{\alpha,C,\beta} B)\)
  for a closed subset~\(D\) of \(X\times A \times B\).  Then
  \(\id_X \times \pi_1\) maps it to the \(j\)\nb-preimage of
  \((\id_{X \times A}\times \beta)(D)\), which is closed because
  \(\id_{X \times A}\times \beta\) is closed and~\(j\) is
  continuous.  Thus \(\id_X \times \pi_1\) is closed.
\end{proof}

\begin{lemma}
  \label{compositehausdorffandproper}
  Let~$A$ be a right $\Gr$\nb-space and~$B$ a left \(\Gr\)\nb-space.
  If the \(\Gr\)\nb-action on~\(A\) is proper and~\(B\) is
  Hausdorff, then the diagonal $\Gr$\nb-action on
  $A \times_{\s, \Gr^0, \rg} B$ defined by
  \(g\cdot (a,b) \defeq (a\cdot g^{-1},g\cdot b)\) is proper.
\end{lemma}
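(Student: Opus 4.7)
The plan is to express the diagonal-action map $F \colon D \to C \times C$ as a composition of a closed embedding with a proper projection out of a pullback; properness then follows from \longref{Lemma}{lem:pullback_proper} together with the standard fact that closed embeddings are proper. Here I write $C \defeq A \times_{\s,\Gr^0,\rg} B$ with anchor $\pi(a,b) \defeq \s(a) = \rg(b)$, and $D \defeq C \times_{\pi,\Gr^0,\s} \Gr$, so the statement to prove is that $F \colon ((a,b),g) \mapsto ((a \cdot g^{-1}, g \cdot b),(a,b))$ is proper.

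First I would repackage properness of the right $\Gr$-action on $A$ as the statement that $f_A \colon A \times_{\s,\Gr^0,\s} \Gr \to A \times A$, $(a,g) \mapsto (a \cdot g^{-1}, a)$, is proper; this is obtained from the map of \longref{Definition}{def:proper_action} by precomposing with the inversion homeomorphism $g \mapsto g^{-1}$ of $\Gr$. Then I form the pullback $P$ of $f_A$ along the projection $C \times C \to A \times A$, $((a_1,b_1),(a_2,b_2)) \mapsto (a_1,a_2)$. Concretely, $P$ consists of quadruples $(a,g,b_1,b_2)$ with $\s(a) = \s(g)$, $\s(a \cdot g^{-1}) = \rg(b_1)$ and $\s(a) = \rg(b_2)$, and the projection $p \colon P \to C \times C$, $(a,g,b_1,b_2) \mapsto ((a \cdot g^{-1},b_1),(a,b_2))$, is proper by \longref{Lemma}{lem:pullback_proper}.

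Finally I would bring in the Hausdorffness of $B$. Define a continuous map $\theta \colon P \to B \times B$ by $\theta(a,g,b_1,b_2) \defeq (b_1, g \cdot b_2)$, using $\s(g) = \s(a) = \rg(b_2)$ to see that $g \cdot b_2$ is defined. Since $B$ is Hausdorff, the diagonal $\Delta_B$ is closed in $B \times B$, so $\theta^{-1}(\Delta_B)$ is closed in $P$. The map $D \to P$, $((a,b),g) \mapsto (a,g,g \cdot b, b)$, is a homeomorphism onto $\theta^{-1}(\Delta_B)$ with continuous inverse $(a,g,b_1,b_2) \mapsto ((a,b_2),g)$; hence it is a closed embedding and therefore proper. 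A direct check of the formulas shows that $p$ composed with this embedding equals $F$, so $F$ is proper as a composition of proper maps. The main obstacle I expect is purely bookkeeping, verifying that the fibre-product constraints defining $P$ and $D$ line up under these identifications; the genuine analytic input is entirely contained in \longref{Lemma}{lem:pullback_proper} and the closedness of $\Delta_B$.
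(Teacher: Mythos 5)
Your proof is correct and follows essentially the same route as the paper: both factor the witness map for the diagonal action through the same space of quadruples $(a,g,b_1,b_2)$, with one factor proper as a pullback of the properness witness for the $\Gr$\nb-action on~$A$ (via \longref{Lemma}{lem:pullback_proper}) and the other proper because~$B$ is Hausdorff. The only cosmetic difference is that you justify the latter factor as a homeomorphism onto the closed subset $\theta^{-1}(\Delta_B)$, whereas the paper realises the same map as a pullback of the proper diagonal $\Delta\colon B\to B\times B$.
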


\begin{proof}
  Since~\(B\) is Hausdorff, the diagonal inclusion
  \[
    \Delta\colon B\to B\times B,\qquad
    b\mapsto (b,b),
  \]
  is a closed map.  Since~\(\Delta\) is also injective, it is even a
  proper map.  By \longref{Lemma}{lem:pullback_proper}, the pullback
  of~\(\Delta\) along any map into \(B\times B\) is again proper.
  It is useful to generalise this result a bit.  Consider maps
  \(\alpha\colon A\to C\) and maps \(f\colon B_1\to B_2\) and
  \(\beta\colon B_2 \to C\).  In the diagram
  \[
    \begin{tikzcd}
      A \times_{C} B_1 \arrow[r, "\pi_2"] \arrow[d, "\id_A \times f"']
      \arrow[dr, phantom, "\scalebox{1.5}{$\lrcorner$}" , very near
      start, color=black]
      & B_1 \arrow[d, "f"] \\
      A \times_{C} B_2 \arrow[r, "\pi_2"] \arrow[d, "\pi_1"']
      \arrow[dr, phantom, "\scalebox{1.5}{$\lrcorner$}" , very near
      start, color=black]
      & B_2 \arrow[d, "\beta"] \\
      A \arrow[r, "\alpha"'] & C
    \end{tikzcd}
  \]
  the lower square and the whole rectangle are pullbacks, and this
  implies that the top square is a pullback square as well.
  Therefore, the map \(\id_A\times f\) is a pullback of~\(f\) and
  inherits the property of being proper from~\(f\).

  We now form this kind of pullback of~\(\Delta\) along the maps
  \(B\times B\to \Gr^0\times \Gr^0\),
  \((b_1,b_2) \mapsto (\rg(b_1),\rg(b_2))\), and
  \(\Gr\times_{\s,\Gr^0,\s} A\to \Gr^0\times \Gr^0\),
  \((g,a) \mapsto (\s(g),\s(a)) = (\s(g),\s(g))\).  This gives a map
  from the space of triples \((g,a,b) \in \Gr\times A\times B\) with
  \(\s(g) = \s(a) = \rg(b)\) to the space of quadruples
  \((g,a,b_1,b_2)\) with \(\s(g) = \s(a) = \rg(b_1)=\rg(b_2)\).  The
  formula \((g,a,b_1,b_2)\mapsto (g,a,g\cdot b_1,b_2)\) defines a
  homeomorphism from the target of this map to the space of
  quadruples \((g,a,b_1,b_2)\in \Gr\times A\times B\times B\) with
  \(\s(g) = \s(a)=\rg(b_2)\) and \(\rg(g) = \rg(b_1)\); the inverse
  is defined by \((g,a,b_1,b_2)\mapsto (g,a,g^{-1}\cdot b_1,b_2)\).

  Since the \(\Gr\)\nb-action on~\(A\) is proper, the following map
  is proper:
  \[
    \varphi\colon \Gr\times_{\s,\Gr^0,\s} A \to A\times A,\qquad
    (g,a) \mapsto (a\cdot g^{-1},a).
  \]
  We map \(A\times A\to \Gr^0 \times \Gr^0\),
  \((a_1,a_2)\mapsto (\s(a_1),\s(a_2))\), and
  \(B\times B \to (\Gr^0,\Gr^0)\),
  \((b_1,b_2) \mapsto (\rg(b_1),\rg(b_2))\).  Then the pullback of
  the proper map~\(\varphi\) along these maps becomes the map
  \((g,a,b_1,b_2) \mapsto (a\cdot g^{-1},a,b_1,b_2)\) from the space
  of all quadruples \((g,a,b_1,b_2)\in \Gr\times A\times B\times B\)
  with \(\s(g) = \s(a) = \rg(b_2)\) and
  \(\s(a\cdot g^{-1}) = \rg(b_1)\) to the space of all quadruples
  \((a_1,a_2,b_1,b_2)\in A\times A\times B\times B\) with
  \(\s(a_1) = \rg(b_1)\) and \(\s(a_2) = \rg(b_2)\).  This map is
  again proper as a pullback of a proper map.  Now use
  \(\s(a\cdot g^{-1}) = \rg(g)\) to identify the domain of this map
  with the codomain of the proper map that we constructed above
  from~\(\Delta\).  Composing the two proper maps above gives the
  map \((g,a,b) \mapsto (a\cdot g^{-1},a,g\cdot b,b)\) from the
  space of triples \((g,a,b) \in \Gr\times A\times B\) with
  \(\s(g) = \s(a) = \rg(b)\) to the space of quadruples
  \((a_1,a_2,b_1,b_2)\) with \(\s(a_1)= \rg(b_1)\) and
  \(\s(a_2) =\rg(b_2)\).  After exchanging the order of \(a_2\)
  and~\(b_1\), this becomes the map that witnesses that the diagonal
  \(\Gr\)\nb-action on \(A\times_{\s,\Gr^0,\rg} B\) is proper.
\end{proof}

\begin{example}
  By \longref{Proposition}{pro:basic_actions_free_proper}, the trivial
  action of the trivial group on a space~\(B\) is proper if and only
  if~\(B\) is Hausdorff.  This example shows that
  \longref{Lemma}{compositehausdorffandproper} becomes false if we do not
  assume~\(B\) to be Hausdorff.
\end{example}

\begin{lemma}
  \label{lem:exchange_orbit_fibreprod}
  We have $(\Bisp \times_{\s, \Gr^0, \rg }
  \Bisp[Y])/\Gr[K] \cong  \Bisp \times_{\s, \Gr^0,
    \rg_{\Bisp[Y]_*}} (\Bisp[Y]/\Gr[K])$.
\end{lemma}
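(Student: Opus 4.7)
The plan is to exhibit the obvious map as a homeomorphism. Consider
\[
  \pi\colon \Bisp\times_{\s,\Gr^0,\rg}\Bisp[Y]\to
  \Bisp\times_{\s,\Gr^0,\rg_*}(\Bisp[Y]/\Gr[K]),\qquad
  (x,y)\mapsto (x,[y]).
\]
This is well defined because the commuting-actions condition gives \(\rg(y\cdot k)=\rg(y)\), so~\(\rg\) descends to a continuous map \(\rg_*\colon \Bisp[Y]/\Gr[K]\to \Gr^0\). The same invariance implies that the formula \((x,y)\cdot k\defeq (x,y\cdot k)\) defines a right \(\Gr[K]\)\nb-action on the domain of~\(\pi\), and that~\(\pi\) is \(\Gr[K]\)\nb-invariant. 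Hence~\(\pi\) descends to a continuous map
\[
  \Phi\colon (\Bisp\times_{\s,\Gr^0,\rg}\Bisp[Y])/\Gr[K]
  \to \Bisp\times_{\s,\Gr^0,\rg_*}(\Bisp[Y]/\Gr[K]).
\]
Bijectivity of~\(\Phi\) is routine: any \((x,[y])\) with \(\s(x)=\rg_*([y])\) equals \(\pi(x,y)\) for every representative~\(y\) of~\([y]\); and if \(\pi(x_1,y_1)=\pi(x_2,y_2)\), then \(x_1=x_2\) and \(y_2=y_1\cdot k\) for some \(k\in\Gr[K]\), so the two points lie in a common \(\Gr[K]\)\nb-orbit.

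The crux is to show that \(\Phi\) is open, equivalently, that~\(\pi\) is open. Since \(\Bisp[Y]\colon \Gr\leftarrow \Gr[K]\) is a groupoid correspondence, the right \(\Gr[K]\)\nb-action on~\(\Bisp[Y]\) is free and proper, hence basic by \longref{Proposition}{pro:basic_actions_free_proper}, so the orbit projection \(q\colon \Bisp[Y]\to \Bisp[Y]/\Gr[K]\) is open by \longref{Lemma}{lem:basic_orbit_lh}. A direct inspection identifies~\(\pi\) with the pullback of~\(q\) along the second projection \(\Bisp\times_{\s,\Gr^0,\rg_*}(\Bisp[Y]/\Gr[K])\to \Bisp[Y]/\Gr[K]\), since a point of this pullback is exactly a pair \((x,y')\) with \(\s(x)=\rg(y')\). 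I would then invoke the standard fact that pullbacks of open maps along continuous maps are open; this is checked quickly on the basis of open sets of the form \((U\times V)\cap (\Bisp\times_{\Gr^0}\Bisp[Y])\) with \(U\subseteq\Bisp\) and \(V\subseteq\Bisp[Y]\) open, using continuity of~\(\rg_*\).

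Consequently, \(\pi\) is a continuous open surjection whose point-fibres are exactly the \(\Gr[K]\)\nb-orbits on \(\Bisp\times_{\Gr^0}\Bisp[Y]\). Any such map is a topological quotient for the corresponding equivalence relation, so~\(\Phi\) is a homeomorphism. The main obstacle is the openness of~\(\pi\); the bijectivity and continuity of~\(\Phi\) and the identification of its fibres are formal.
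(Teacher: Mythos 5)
Your argument is correct, and its skeleton coincides with the paper's: both proofs hinge on recognising the comparison map \(\id_{\Bisp}\times_{\Gr^0}\Qu_{\Bisp[Y]}\colon \Bisp\times_{\s,\Gr^0,\rg}\Bisp[Y]\to \Bisp\times_{\s,\Gr^0,\rg_{\Bisp[Y]_*}}(\Bisp[Y]/\Gr[K])\) (your~\(\pi\)) as a pullback of the orbit projection \(\Qu_{\Bisp[Y]}\), using \(\rg_{\Bisp[Y]_*}\circ\Qu_{\Bisp[Y]}=\rg\), and then descending it to the induced bijection out of the orbit space. The difference is in the concluding step. The paper observes that \(\Qu_{\Bisp[Y]}\) is a surjective local homeomorphism (\longref{Lemma}{lem:basic_orbit_lh}), that pullbacks of local homeomorphisms are local homeomorphisms (\longref{Lemma}{lem:pullback_local_homeo}), and that the quotient map \(\Bisp\times_{\Gr^0}\Bisp[Y]\to(\Bisp\times_{\Gr^0}\Bisp[Y])/\Gr[K]\) is also a surjective local homeomorphism; since both downward maps in the resulting triangle are surjective local homeomorphisms and the bottom map is bijective, the bottom map is a homeomorphism. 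You instead use only that \(\Qu_{\Bisp[Y]}\) is open, that openness is preserved under pullback, and that a continuous open surjection whose fibres are exactly the \(\Gr[K]\)\nb-orbits is the quotient map for the orbit relation, so that~\(\Phi\) is a homeomorphism. Your finish is marginally more elementary: it needs neither local injectivity of \(\Qu_{\Bisp[Y]}\) nor the fact that the orbit projection of the fibre product is a local homeomorphism, whereas the paper's version simply reuses its standing lemmas on local homeomorphisms without any extra point-set reasoning. Both arguments are complete.
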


\begin{proof}
  The map
  $\Qu_{\Bisp[Y]}\colon \Bisp[Y] \twoheadrightarrow \Bisp[Y]/\Gr[K]$
  is a local homeomorphism by \longref{Lemma}{lem:basic_orbit_lh}.  Then
  so is any pullback of it by \longref{Lemma}{lem:pullback_local_homeo}.
  As in the proof of \longref{Lemma}{compositehausdorffandproper}, this
  applies to the map
  \[
    \id_{\Bisp} \times_{\Gr^0} \Qu_{\Bisp[Y]}\colon
    \Bisp \times_{\s, \Gr^0, \rg } \Bisp[Y] \to
    \Bisp \times_{\s, \Gr^0, \rg_{\Bisp[Y]_*}} (\Bisp[Y]/\Gr[K]),
  \]
  which is equivalent to the pullback along the maps
  \(\rg_{\Bisp[Y]_*}\colon \Bisp[Y]/\Gr[K] \to \Gr^0\) and
  \(\s\colon \Bisp \to \Gr^0\) because
  \(\rg_{\Bisp[Y]_*}\circ \Qu_{\Bisp[Y]} = \rg\).  The quotient map
  \(\Qu\colon\Bisp \times_{\s, \Gr^0, \rg } \Bisp[Y] \to (\Bisp
  \times_{\s, \Gr^0, \rg_{\Bisp[Y]_*}} \Bisp[Y])/\Gr[K]\) is a local
  homeomorphism as well by \longref{Lemma}{compositehausdorffandproper}.
  By the universal property of the orbit space, we get a commuting
  diagram of continuous maps
  \[
    \begin{tikzcd}[column sep = huge]
      \Bisp \times_{\s, \Gr^0, \rg } \Bisp[Y]
      \dar["\Qu"'] \ar[dr,"\id_{\Bisp} \times_{\Gr^0}
      \Qu_{\Bisp[Y]}"]\\
      (\Bisp \times_{\s, \Gr^0, \rg }
      \Bisp[Y])/\Gr[K] \rar["(\id_{\Bisp} \times_{\Gr^0}
      \Qu_{\Bisp[Y]})_*"']&
      \Bisp \times_{\s, \Gr^0, \rg_{\Bisp[Y]_*}}
      (\Bisp[Y]/\Gr[K]).
    \end{tikzcd}
  \]
  The map in the bottom is easily seen to be bijective.  Since both
  maps that go down are surjective and local homeomorphisms, it
  follows that the bottom map is a local homeomorphism as well.
  Being bijective, it is a homeomorphism.
\end{proof}

Since the map
$\pi_1'\colon (\Bisp \times_{\s, \Gr^0, \rg } \Bisp[Y])/\Gr[K] \cong
\Bisp \times_{\s, \Gr^0, \rg } (\Bisp[Y]/\Gr[K]) \to \Bisp$ is
\(\Gr\)\nb-equivariant, it induces a map
$(\pi_1')_*\colon (\Bisp \Grcomp_{\Gr} \Bisp[Y])/\Gr[K] \to
\Bisp/\Gr$ on the \(\Gr\)\nb-orbit spaces.

\begin{lemma}
  \label{lem:proper_on_orbits}
  If~\(\pi_1'\) is proper, then so is~$(\pi_1')_*$.
\end{lemma}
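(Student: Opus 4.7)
The plan is to descend closedness through the orbit projections, exploiting that they are open (in fact local homeomorphisms) and hence give quotient maps after multiplication with the identity on any test space. Write $\Qu_1$ for the orbit projection $\Bisp \times_{\s,\Gr^0,\rg} (\Bisp[Y]/\Gr[K]) \to (\Bisp \times_{\s,\Gr^0,\rg} (\Bisp[Y]/\Gr[K]))/\Gr$ and $\Qu_2$ for $\Bisp \to \Bisp/\Gr$, so that $(\pi_1')_* \circ \Qu_1 = \Qu_2 \circ \pi_1'$ by construction.

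First I would check that both $\Qu_1$ and $\Qu_2$ are open surjections. The right $\Gr$\nb-action on~$\Bisp$ is free and proper by the definition of a groupoid correspondence, and $\Bisp[Y]/\Gr[K]$ is Hausdorff by \longref{Remark}{rem:groupoid_corr_locally_compact}; hence \longref{Lemma}{compositehausdorffandproper} gives that the diagonal action on $\Bisp \times_{\s,\Gr^0,\rg} (\Bisp[Y]/\Gr[K])$ is proper, and it is free since the first coordinate is. By \longref{Proposition}{pro:basic_actions_free_proper}, both actions are basic, so \longref{Lemma}{lem:basic_orbit_lh} shows that $\Qu_1$ and $\Qu_2$ are surjective local homeomorphisms and, in particular, open. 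Therefore $\Qu_1 \times \id_Z$ and $\Qu_2 \times \id_Z$ are open continuous surjections, hence quotient maps, for any space~$Z$.

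Next I would unwind the definition of properness. Fix a space~$Z$ and a closed set $C \subseteq ((\Bisp \Grcomp_{\Gr} \Bisp[Y])/\Gr[K]) \times Z$. Let $\tilde C \defeq (\Qu_1 \times \id_Z)^{-1}(C)$; this is closed and $\Gr$\nb-saturated by construction. Since $\pi_1'$ is proper, $D \defeq (\pi_1' \times \id_Z)(\tilde C)$ is closed in $\Bisp \times Z$. The $\Gr$\nb-equivariance of $\pi_1'$, combined with the trivial action on~$Z$, yields that $D$ is $\Gr$\nb-saturated in $\Bisp \times Z$; that is, $(\Qu_2 \times \id_Z)^{-1}((\Qu_2 \times \id_Z)(D)) = D$. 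Because $\Qu_2 \times \id_Z$ is a quotient map, this forces $(\Qu_2 \times \id_Z)(D)$ to be closed. Commutativity of the orbit diagram gives $((\pi_1')_* \times \id_Z)(C) = (\Qu_2 \times \id_Z)(D)$, so $(\pi_1')_* \times \id_Z$ is closed; as $Z$ was arbitrary, $(\pi_1')_*$ is proper.

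The only real obstacle is a small bookkeeping point: one must justify that $\Qu_1 \times \id_Z$ is a quotient map, which is where openness of $\Qu_1$ (guaranteed by basic-ness of the diagonal action) is essential. The rest is a purely formal chase through the commutative square, once the saturation of $D$ is observed.
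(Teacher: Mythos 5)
Your proof is correct and follows essentially the same route as the paper: pull the closed set back along the (open, surjective) orbit projection, push it forward with the proper map $\pi_1'\times\id_Z$ to get a closed, $\Gr$\nb-saturated set, and descend along the open projection $\Qu_{\Bisp}\times\id_Z$ — the paper phrases the last step as ``the image of the open, saturated complement is the open complement of the image'' rather than via the quotient-map language, but that is the same argument. Your preliminary paragraph verifying that the diagonal action on $\Bisp\times_{\s,\Gr^0,\rg}(\Bisp[Y]/\Gr[K])$ is free and proper, hence basic with open orbit projection, matches what the paper draws from \longref{Lemmas}{lem:exchange_orbit_fibreprod} and~\ref{lem:basic_orbit_lh}.
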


\begin{proof}
  Let~\(Z\) be a topological space.
  \longref{Lemma}{lem:exchange_orbit_fibreprod} implies
  \((\Bisp/\Gr)\times Z \cong (\Bisp\times Z)/\Gr\) and
  \((\Bisp \times_{\s, \Gr^0, \rg } \Bisp[Y])/\Gr[K] \times Z \cong
  (\Bisp \times_{\s, \Gr^0, \rg } \Bisp[Y] \times Z)/\Gr[K]\).
  Consider the following diagram:
  \[
    \begin{tikzcd}
      (\Bisp \times_{\s, \Gr^0, \rg }
      \Bisp[Y] \times Z)/\Gr[K] \arrow[r, "\pi_1' \times \id_Z"] \arrow[d, "\Qu"']
      & \Bisp \times Z \arrow[d, "\Qu_{\Bisp}"] \\
      (\Bisp \Grcomp_{\Gr} \Bisp[Y]\times Z)/\Gr[K]
      \arrow[r, "(\pi_1')_*\times \id_Z"'] & (\Bisp/\Gr)\times Z
    \end{tikzcd}
  \]
  Let~$A$ be a closed subset in
  $(\Bisp \Grcomp_{\Gr} \Bisp[Y]\times Z)/\Gr[K]$.  We abbreviate
  \(f\defeq \pi_1' \times \id_Z\) Since~$\Qu$ is continuous and~$f$
  is proper, $f(\Qu^{-1}(A))$ is closed.  Then
  $\Bisp\backslash f(\Qu^{-1}(A))$ is open in~$\Bisp\times Z$.  It
  consists of those $(x,z)$ whose \(\Gr\)\nb-orbit is disjoint
  from~$f(A)$.  The map~$\Qu_{\Bisp}$ is open by
  \longref{Lemma}{lem:basic_orbit_lh}.  So
  $\Qu_{\Bisp}(\Bisp\times Z\backslash f(\Qu^{-1}(A))) =
  (\Bisp/\Gr\times Z) \backslash \Qu_{\Bisp}(f(\Qu^{-1}(A)))$ is
  open in $\Bisp/\Gr\times Z$.  Thus
  $\Qu_{\Bisp}(f(\Qu^{-1}(A))) = (\pi_1')_*\times \id_Z(A)$ is
  closed.
\end{proof}

\begin{lemma}
  \label{lem:commute_two_orbits}
  There is a canonical homeomorphism
  $(\Bisp \Grcomp_{\Gr} \Bisp[Y])/\Gr[K] \cong \Bisp \Grcomp_{\Gr}
  (\Bisp[Y]/\Gr[K])$.
\end{lemma}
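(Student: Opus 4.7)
The plan is to reduce the claim to the fact that, for two commuting groupoid actions on a space, the two iterated orbit spaces coincide. Let $W \defeq \Bisp\times_{\s,\Gr^0,\rg}\Bisp[Y]$, and equip it with the diagonal left $\Gr$\nobreakdash-action $g\cdot(x,y) = (x\cdot g^{-1},g\cdot y)$ that defines $\Bisp\Grcomp_{\Gr}\Bisp[Y]$, as well as with the right $\Gr[K]$\nobreakdash-action $(x,y)\cdot k = (x,y\cdot k)$ inherited from the right $\Gr[K]$\nobreakdash-action on $\Bisp[Y]$. A direct computation using commutativity of the $\Gr$\nobreakdash- and $\Gr[K]$\nobreakdash-actions on $\Bisp[Y]$ shows that these two actions on $W$ commute.

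First, I would apply \longref{Lemma}{lem:exchange_orbit_fibreprod} to get a canonical homeomorphism
\[
  W/\Gr[K] \;\cong\; \Bisp\times_{\s,\Gr^0,\rg_*}(\Bisp[Y]/\Gr[K]).
\]
The diagonal $\Gr$\nobreakdash-action on $W$ descends through the $\Gr[K]$\nobreakdash-quotient because the two actions commute, and under the above homeomorphism it becomes the diagonal $\Gr$\nobreakdash-action on $\Bisp\times_{\s,\Gr^0,\rg_*}(\Bisp[Y]/\Gr[K])$, where $\Gr$ acts on $\Bisp[Y]/\Gr[K]$ via the left action descended from $\Bisp[Y]$. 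Taking a further $\Gr$\nobreakdash-quotient on both sides identifies $\Bisp\Grcomp_{\Gr}(\Bisp[Y]/\Gr[K])$ with $(W/\Gr[K])/\Gr$. On the other hand, by definition $(\Bisp\Grcomp_{\Gr}\Bisp[Y])/\Gr[K] = (W/\Gr)/\Gr[K]$, so the task reduces to producing a canonical homeomorphism $(W/\Gr)/\Gr[K]\cong(W/\Gr[K])/\Gr$.

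Since the two actions on $W$ commute, their orbit equivalence relations commute, and both iterated quotients agree set-theoretically with the joint-orbit set $W/(\Gr\times\Gr[K])$. To check that the topologies coincide, I would observe that both orbit projections $W\to W/\Gr$ and $W\to W/\Gr[K]$ are open continuous surjections: the openness argument in the second half of the proof of \longref{Lemma}{lem:basic_orbit_lh} uses only that the relevant action map is a local homeomorphism, which is supplied by \longref{Lemma}{lem:action_local_homeo} and does not require basicness of the action. It follows that each map in the chain $W\to W/\Gr\to (W/\Gr)/\Gr[K]$ is a quotient map, so a subset of $(W/\Gr)/\Gr[K]$ is open if and only if its preimage in $W$ is open; the analogous statement holds for the reversed order. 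Hence both iterated topologies agree with the quotient topology on $W/(\Gr\times\Gr[K])$, which gives the homeomorphism. The principal obstacle in this argument is precisely the last step, matching the two iterated quotient topologies, and openness of the two orbit projections is what resolves it.
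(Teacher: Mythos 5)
Your proposal is correct and follows essentially the same route as the paper: both identify $(\Bisp\times_{\s,\Gr^0,\rg}\Bisp[Y])/\Gr[K]$ with $\Bisp\times_{\s,\Gr^0,\rg_*}(\Bisp[Y]/\Gr[K])$ via Lemma~\ref{lem:exchange_orbit_fibreprod} and then match the two iterated quotients of the fibre product by the commuting $\Gr$- and $\Gr[K]$-actions. The only (harmless) difference is in the last step, where the paper invokes that the remaining orbit projections are surjective local homeomorphisms by Lemma~\ref{lem:basic_orbit_lh}, while you argue more elementarily that composites of quotient maps are quotient maps, so both iterated topologies agree with that of $W/(\Gr\times\Gr[K])$.
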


\begin{proof}
  Let
  $\pr_0 \colon (\Bisp \times_{\s, \Gr^0, \rg } \Bisp[Y])/\Gr[K] \to
  (\Bisp \times_{\s, \Gr^0, \rg } \Bisp[Y])/(\Gr \times \Gr[K])$ be
  the orbit space projection.  By definition,
  $(\Bisp \times_{\s, \Gr^0, \rg } \Bisp[Y])/(\Gr \times \Gr[K])
  \cong (\Bisp \Grcomp_{\Gr} \Bisp[Y])/\Gr[K]$.  Thus we may define
  $\pr_1 \colon (\Bisp \times_{\s, \Gr^0, \rg } \Bisp[Y])/\Gr[K] \to
  (\Bisp \Grcomp_{\Gr} \Bisp[Y])/\Gr[K]$.  Let
  $\pr_2\colon \Bisp \times_{\s, \Gr^0, \rg_{\Bisp[Y]_*}}
  (\Bisp[Y]/\Gr[K]) \to \Bisp \Grcomp_{\Gr} (\Bisp[Y]/\Gr[K])$ be the
  orbit space projection.  There is a commutative diagram
  \[
    \begin{tikzcd}
      (\Bisp \times_{\s, \Gr^0, \rg }
      \Bisp[Y])/\Gr[K] \arrow[r, "\cong"] \arrow[d,
      "\pr_1"']
      & \Bisp \times_{\s, \Gr^0, \rg_{\Bisp[Y]_*}}
      (\Bisp[Y]/\Gr[K]) \arrow[d, "\pr_2"] \\
      (\Bisp \Grcomp_{\Gr} \Bisp[Y])/\Gr[K]
      \arrow[r, "g"'] & \Bisp \Grcomp_{\Gr}
      (\Bisp[Y]/\Gr[K]).
    \end{tikzcd}
  \]
  The map~$g$ is clearly bijective, and the homeomorphism in the top
  row is shown in \longref{Lemma}{lem:exchange_orbit_fibreprod}.  Since
  $\pr_1$ and~$\pr_2$ are surjective local homeomorphisms by
  \longref{Lemma}{lem:basic_orbit_lh}, $g$ is a homeomorphism.
\end{proof}

\begin{proposition}
  \label{pro:groupoid_correspondences_composition}
  The actions of \(\Gr[H]\) and~\(\Gr[K]\) on
  \(\Bisp\Grcomp_{\Gr}\Bisp[Y]\) are well defined and continuous and
  turn this into a groupoid correspondence
  \(\Gr[H]\leftarrow \Gr[K]\).

  If both correspondences \(\Bisp\) and~\(\Bisp[Y]\) are proper or
  tight, then so is \(\Bisp\Grcomp_{\Gr}\Bisp[Y]\).
\end{proposition}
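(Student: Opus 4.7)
The plan is to verify that $\Bisp\Grcomp_{\Gr}\Bisp[Y]$ satisfies the three defining conditions of a groupoid correspondence $\Gr[H]\leftarrow\Gr[K]$. The main technical difficulty, and what I expect to be the central obstacle, is that $\Bisp$ and $\Bisp[Y]$ need not be Hausdorff, so \longref{Lemma}{compositehausdorffandproper} cannot be invoked directly to analyse the diagonal $\Gr$-action on $P\defeq \Bisp\times_{\s,\Gr^0,\rg}\Bisp[Y]$. The strategy is to establish basicness via the inner product from \longref{Proposition}{pro:innprod} wherever Hausdorffness is missing, and to defer any appeal to \longref{Lemma}{compositehausdorffandproper} until after passing to a $\Gr[K]$-quotient, where Hausdorffness is available.

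The first step is to show the diagonal $\Gr$-action on $P$ is free and basic. Freeness is immediate from freeness of $\Gr$ on~$\Bisp$. For basicness, if $(x_1,y_1), (x_2,y_2)\in P$ lie in the same $\Gr$-orbit, then the unique $g\in\Gr$ with $x_1=x_2\cdot g^{-1}$ is $g=\braket{x_2}{x_1}$, continuously defined by \longref{Proposition}{pro:innprod}, and this $g$ automatically satisfies $g\cdot y_2=y_1$. Then \longref{Lemma}{lem:basic_orbit_lh} gives that the quotient map $\Qu\colon P\to\Bisp\Grcomp_{\Gr}\Bisp[Y]$ is a surjective local homeomorphism, from which continuity of the induced commuting $\Gr[H]$- and $\Gr[K]$-actions is immediate. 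The right anchor $\s\colon\Bisp\Grcomp_{\Gr}\Bisp[Y]\to\Gr[K]^0$ is a local homeomorphism because the projection $\pi_2\colon P\to\Bisp[Y]$ is one by \longref{Lemma}{lem:pullback_local_homeo}, hence $\s\circ\pi_2$ is a local homeomorphism, and this factors through~$\Qu$.

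The next step is to show the right $\Gr[K]$-action on the composite is free and proper, by applying \longref{Proposition}{pro:basic_actions_free_proper}. Freeness is a direct computation using freeness of $\Gr$ on $\Bisp$ and of $\Gr[K]$ on $\Bisp[Y]$. Basicness follows by iterating \longref{Proposition}{pro:innprod}: given $[x_1,y_1]$ and $[x_2,y_2]$ in the same $\Gr[K]$-orbit, the unique $k$ is $k=\braket{\braket{x_2}{x_1}\cdot y_2}{y_1}$, continuously defined. For Hausdorffness of the orbit space, \longref{Lemma}{lem:commute_two_orbits} identifies $(\Bisp\Grcomp_{\Gr}\Bisp[Y])/\Gr[K]\cong\Bisp\Grcomp_{\Gr}(\Bisp[Y]/\Gr[K])$; since $\Bisp[Y]/\Gr[K]$ is Hausdorff, \longref{Lemma}{compositehausdorffandproper} now applies to show the diagonal $\Gr$-action on $\Bisp\times_{\Gr^0}(\Bisp[Y]/\Gr[K])$ is proper, freeness reduces again to freeness on $\Bisp$, and \longref{Proposition}{pro:basic_actions_free_proper} gives Hausdorffness of the quotient.

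For properness and tightness of the composite, the plan is to factor the induced anchor on the $\Gr[K]$-orbit space as
\[
  (\Bisp\Grcomp_{\Gr}\Bisp[Y])/\Gr[K]
  \xrightarrow{(\pi_1')_*} \Bisp/\Gr
  \xrightarrow{\rg_*} \Gr[H]^0
\]
via \longref{Lemmas}{lem:commute_two_orbits} and~\ref{lem:exchange_orbit_fibreprod}. If $\Bisp[Y]$ is proper, then $\rg_*\colon\Bisp[Y]/\Gr[K]\to\Gr^0$ is proper, so its pullback $\pi_1'$ is proper by \longref{Lemma}{lem:pullback_proper}, and \longref{Lemma}{lem:proper_on_orbits} then yields properness of $(\pi_1')_*$; combined with properness of $\rg_*\colon\Bisp/\Gr\to\Gr[H]^0$ coming from properness of $\Bisp$, the composite is proper. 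For tightness, the same factorisation works with ``proper'' replaced by ``homeomorphism'', since the pullback of a homeomorphism along any map is a $\Gr$-equivariant homeomorphism and hence descends to a homeomorphism on $\Gr$-quotients.
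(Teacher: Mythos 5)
Your proposal follows essentially the same route as the paper's proof: continuity of the induced actions and the local-homeomorphism property of~\(\s\) via the orbit space projection, basicness of the right \(\Gr[K]\)\nb-action via the iterated bracket formula, Hausdorffness of the orbit space by combining \longref{Lemma}{lem:commute_two_orbits} with \longref{Lemma}{compositehausdorffandproper} applied only after dividing out~\(\Gr[K]\), and properness/tightness via the factorisation through~\((\pi_1')_*\); your explicit verification that the diagonal \(\Gr\)\nb-action on \(\Bisp\times_{\Gr^0}\Bisp[Y]\) is basic is a useful step the paper leaves implicit. The only discrepancies are cosmetic index slips in the bracket formulas (for instance, the unique \(g\) with \(x_1 = x_2\cdot g^{-1}\) is \(\braket{x_1}{x_2}\) rather than \(\braket{x_2}{x_1}\)), which do not affect the argument.
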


\begin{proof}
  To show that the actions are well defined, let $[x', y'] = [x, y]$
  be two representatives for the same element in
  \(\Bisp\Grcomp_{\Gr}\Bisp[Y]\).  Then $x' = x \cdot g^{-1}$ and
  $y' = g \cdot y$ for some $g \in \Gr$.  Then
  $[h \cdot x', y'] = [(h \cdot x) \cdot g^{-1}, g \cdot y] =
  [h\cdot x,y]$ and
  $[x', y' \cdot k] = [x \cdot g^{-1}, g \cdot (y \cdot k)] =
  [x,y\cdot k]$.

  The actions on \(\Bisp \Grcomp_{\Gr} \Bisp[Y]\) are continuous
  because they are continuous on
  \(\Bisp \times_{\s,\Gr^0,\rg} \Bisp[Y]\) and
  \(\Gr[H] \times_{\s,\Gr[H]^0,\rg} (\Bisp \Grcomp_{\Gr} \Bisp[Y])
  \cong (\Gr[H] \times_{\s,\Gr[H]^0,\rg} \Bisp
  \times_{\s,\Gr^0,\rg} \Bisp[Y])/\Gr\) by
  \longref{Lemma}{lem:commute_two_orbits}, and similarly
  for~\(\Gr[K]\).

  To show that \(\Bisp\Grcomp_{\Gr}\Bisp[Y]\) is a groupoid
  correspondence, we first check that the actions commute.  Indeed,
  $\s(h \cdot [x, y]) = \s(y) = \s([x, y])$,
  $\rg([x, y] \cdot k) = \rg(x) = \rg([x, y])$, and
  $(h \cdot [x, y]) \cdot k = [h\cdot x,y\cdot k] = h \cdot ([x, y]
  \cdot k)$.

  Next we show that
  $\s\colon \Bisp \Grcomp_{\Gr} \Bisp[Y] \to \Gr[K]^0$ is a local
  homeomorphism.  Since $\s_{\Bisp}\colon \Bisp \to \Gr^0$ is a
  local homeomorphism, \longref{Lemma}{lem:pullback_local_homeo} implies
  that
  $\pi_{\Bisp[Y]} \colon \Bisp \times_{\s, \Gr^0, \rg } \Bisp[Y] \to
  \Bisp[Y]$ is a local homeomorphism as well.  Since $\s_{\Bisp[Y]}$
  is a local homeomorphism, too, the composite map in the top of the
  following diagram is a local homeomorphism:
  \[
    \begin{tikzcd}
      \Bisp \times_{\s, \Gr^0, \rg } \Bisp[Y]
      \ar[d, "\Qu"']
      \rar["\pi_{\Bisp[Y]}"]& \Bisp[Y] \rar["\s_{\Bisp[Y]}"]&
      \Gr[K]^0\\
      \Bisp \Grcomp_{\Gr} \Bisp[Y] \ar[urr, "\s"']
    \end{tikzcd}
  \]
  The map
  $\s\colon \Bisp \Grcomp_{\Gr} \Bisp[Y] \to \Gr[K]^0$ is defined so
  as to make this diagram commute, and the vertical map is a
  surjective local homeomorphism by \longref{Lemma}{lem:basic_orbit_lh}.
  Then it follows that~\(\s\) is a local homeomorphism as well.

  Next, we show that the $\Gr[K]$\nb-action on
  $\Bisp \Grcomp_{\Gr} \Bisp[Y]$ is basic.  It is easy to check that
  this action is free.  Therefore, if
  \([x_1,y_1],[x_2,y_2]\in\Bisp \Grcomp_{\Gr} \Bisp[Y]\) are in the
  same \(\Gr[K]\)\nb-orbit, then there is a unique \(k\in\Gr[K]\)
  with \(\s(y_1) = \s[x_1,y_1] = \rg(k)\) and
  \([x_2,y_2] = [x_1,y_1]\cdot k\).  We must show that~\(k\) depends
  continuously on the pair \([x_1,y_1],[x_2,y_2]\) --~subject to the
  condition that they lie in the same \(\Gr[K]\)\nb-orbit.  First,
  \([x_1,y_1]\cdot k = [x_1,y_1\cdot k]\), and this is equal to
  \([x_2,y_2]\) if and only if there is \(g\in\Gr\) with
  \(\s(x_1) = \rg(g)\) and
  \((x_2,y_2) = (x_1\cdot g,g^{-1}\cdot y_1\cdot k)\).  Then
  \longref{Lemma}{def:innerprodofcorr} implies \(g = \braket{x_1}{x_2}\)
  and
  \[
    k = \braket{g^{-1} y_1}{y_2}
    = \braket[\big]{\braket{x_2}{x_1}\cdot y_1}{y_2}.
  \]
  Since the bracket maps for \(\Bisp\) and~\(\Bisp[Y]\) are
  continuous, it follows that~\(k\) depends continuously on
  \((x_1,y_1),(x_2,y_2)\in\Bisp \times_{\s,\Gr^0,\rg} \Bisp[Y]\).
  Since the orbit space projection from
  \(\Bisp \times_{\s,\Gr^0,\rg} \Bisp[Y]\) to
  $\Bisp \Grcomp_{\Gr} \Bisp[Y]$ is a local homeomorphism by
  \longref{Lemma}{lem:basic_orbit_lh}, it follows that~\(k\) still
  depends continuously on
  \([x_1,y_1],[x_2,y_2]\in\Bisp \Grcomp_{\Gr} \Bisp[Y]\).

  To prove that $(\Bisp \Grcomp_{\Gr} \Bisp[Y])/\Gr[K]$ is a
  groupoid correspondence, it only remains to show that the orbit
  space $(\Bisp \Grcomp_{\Gr} \Bisp[Y])/\Gr[K]$ is Hausdorff; then
  \longref{Proposition}{pro:basic_actions_free_proper} shows that
  the right \(\Gr[K]\)\nb-action on \(\Bisp \Grcomp_{\Gr} \Bisp[Y]\)
  is free and proper.  Since the right $\Gr[K]$\nb-action
  on~$\Bisp[Y]$ is free and proper, $\Bisp[Y]/\Gr[K]$ is Hausdorff
  by \longref{Proposition}{pro:basic_actions_free_proper}.  Then
  \longref{Lemma}{compositehausdorffandproper} shows that the
  diagonal \(\Gr\)\nb-action on
  $\Bisp \times_{\s, \Gr^0, \rg_{\Bisp[Y]_*}} (\Bisp[Y]/\Gr[K])$ is
  proper.  By \longref{Proposition}{pro:basic_actions_free_proper},
  its orbit space $\Bisp \Grcomp_{\Gr} (\Bisp[Y]/\Gr[K])$ is
  Hausdorff.  Then $(\Bisp \Grcomp_{\Gr} \Bisp[Y])/\Gr[K]$ is
  Hausdorff by \longref{Lemma}{lem:commute_two_orbits}.

  Now assume that both correspondences $\Bisp$ and~$\Bisp[Y]$ are
  proper.  That is, the maps
  $\rg_{\Bisp_*}\colon \Bisp/\Gr \to \Gr[H]^0$ and
  $\rg_{\Bisp[Y]_*}\colon \Bisp[Y]/\Gr[K] \to \Gr^0$ are proper.
  \longref{Lemma}{lem:pullback_proper} applied to the pullback diagram
  \[
    \begin{tikzcd}
      \Bisp \times_{\s, \Gr^0, \rg_{\Bisp[Y]_*}}
      (\Bisp[Y]/\Gr[K]) \arrow[r, "\pi_2"] \arrow[d, "\pi_1"']
      \arrow[dr, phantom, "\scalebox{1.5}{$\lrcorner$}" , very near
      start, color=black]
      & \Bisp[Y]/\Gr[K] \arrow[d, "\rg_{\Bisp[Y]_*}"] \\
      \Bisp \arrow[r, "\s_{\Bisp}"'] & \Gr^0
    \end{tikzcd}
  \]
  shows that~$\pi_1$ is proper.  Thus the map
  $\pi_1'\colon (\Bisp \times_{\s, \Gr^0, \rg } \Bisp[Y])/\Gr[K] \to
  \Bisp$ defined through the homeomorphism in
  \longref{Lemma}{lem:exchange_orbit_fibreprod} is also proper.  Then the
  map~$(\pi_1')_*$ in \longref{Lemma}{lem:proper_on_orbits} is proper.  Then
  $\rg_{\Bisp_*} \circ (\pi_1')_*$ is proper, and this map is equal
  to \(\rg_{{\Bisp \Grcomp_{\Gr} \Bisp[Y]}_*}\).

  Finally, suppose that both correspondences
  $\Bisp\colon \Gr[H] \leftarrow \Gr$ and
  $\Bisp[Y]\colon \Gr \leftarrow \Gr[K]$ are tight.  Then we follow
  the argument in the proper case and observe instead that each of
  the maps \(\rg_{\Bisp_*}\), \(\pi_1\), \(\pi_1'\)
  and~$(\pi_1')_*$, \(\rg_{\Bisp[Y]_*}\) is a homeomorphism, and
  then so is the composite map
  $\rg_{{\Bisp \Grcomp_{\Gr} \Bisp[Y]}_*} = \rg_{\Bisp_*} \circ
  (\pi_1')_*$.
\end{proof}

\section{The bicategory of groupoid correspondences}
\label{sec:bicategory}

In this section, we define the bicategory~\(\Grcat\) of groupoid
correspondences.  Its objects are étale, locally compact groupoids
with Hausdorff object space, which we continue to call just
``groupoids'' (see \longref{Definition}{def:groupoid}).  Let \(\Gr[H]\)
and~\(\Gr\) be two such groupoids.  An arrow
\(\Gr[H] \leftarrow \Gr\) or \(\Gr \to \Gr[H]\) is a groupoid
correspondence \(\Bisp\colon \Gr[H] \leftarrow \Gr\) as in
\longref{Definition}{def:groupoid_corr}; beware that the source of such a
correspondence is on the right, as in our notation for the anchor
maps.  Let \(\Bisp,\Bisp[Y]\colon \Gr[H] \leftleftarrows \Gr\) be
two such groupoid correspondences.  A \(2\)\nb-arrow
\(\Bisp \Rightarrow \Bisp[Y]\) is an injective,
\(\Gr[H],\Gr\)-equivariant, continuous map
\(\alpha\colon \Bisp \to \Bisp[Y]\).  The following lemma shows that
such a \(2\)\nb-arrow is automatically open or, equivalently, a
homeomorphism onto an open subset of~\(\Bisp[Y]\):

\begin{lemma}
  \label{lem:2-arrow_local_homeo}
  Let \(\Bisp,\Bisp[Y]\colon \Gr[H] \leftleftarrows \Gr\) be
  groupoid correspondences.  Any \(\Gr[H],\Gr\)-equivariant,
  continuous map \(\alpha\colon \Bisp \to \Bisp[Y]\) is a local
  homeomorphism.  Therefore, a \(2\)\nb-arrow
  \(\Bisp \Rightarrow \Bisp[Y]\) is a homeomorphism from~\(\Bisp\)
  onto an open subset of~\(\Bisp[Y]\).  It is a homeomorphism
  onto~\(\Bisp[Y]\) if it is also surjective.
\end{lemma}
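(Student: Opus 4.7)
The plan is to exploit the fact that both right anchor maps \(\s\colon \Bisp \to \Gr^0\) and \(\s\colon \Bisp[Y] \to \Gr^0\) are local homeomorphisms, and that \(\Gr\)-equivariance forces the identity \(\s \circ \alpha = \s\) on~\(\Bisp\) (this is part of the definition of \(\Gr\)-equivariance in \longref{Definition}{def:equivariant_map}). Thus locally \(\alpha\) is forced to agree with the inverse of~\(\s\) on~\(\Bisp[Y]\) composed with~\(\s\) on~\(\Bisp\), which is manifestly a local homeomorphism.

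More precisely, I would fix \(x \in \Bisp\) and first pick an open neighbourhood~\(V\) of \(\alpha(x)\) in~\(\Bisp[Y]\) on which \(\s\) restricts to a homeomorphism onto an open subset \(\s(V) \subseteq \Gr^0\). By continuity of~\(\alpha\), the set \(\alpha^{-1}(V)\) is an open neighbourhood of~\(x\); inside it I shrink further to an open neighbourhood~\(U\) of~\(x\) on which \(\s\) restricts to a homeomorphism onto an open subset \(\s(U) \subseteq \s(V)\). Then on~\(U\), the equation \(\s(\alpha(u)) = \s(u)\) gives \(\alpha|_U = (\s|_V)^{-1} \circ \s|_U\), which is a composite of two homeomorphisms onto open subsets. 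Hence \(\alpha|_U\) is a homeomorphism onto the open subset \((\s|_V)^{-1}(\s(U))\) of~\(\Bisp[Y]\), proving \(\alpha\) is a local homeomorphism.

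For the second part, I would use the standard fact that a local homeomorphism is an open map, so if \(\alpha\) is also injective then \(\alpha(\Bisp)\) is open in~\(\Bisp[Y]\) and \(\alpha\colon \Bisp \to \alpha(\Bisp)\) is a continuous open bijection, i.e.\ a homeomorphism onto an open subset. If \(\alpha\) is additionally surjective, then \(\alpha(\Bisp) = \Bisp[Y]\) and \(\alpha\) is a global homeomorphism.

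There is no real obstacle here; the only thing to be careful about is that equivariance only uses the right action to get \(\s \circ \alpha = \s\), which is exactly what the local-homeomorphism argument needs — the left \(\Gr[H]\)-equivariance plays no role in this lemma. No properness or basicness assumptions are needed either, which is why the statement holds with such a mild hypothesis.
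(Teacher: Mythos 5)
Your proof is correct and follows essentially the same route as the paper: both fix a point, choose a neighbourhood of \(\alpha(x)\) on which \(\s_{\Bisp[Y]}\) is a homeomorphism onto its (open) image, shrink a neighbourhood of \(x\) inside \(\alpha^{-1}\) of it on which \(\s_{\Bisp}\) is injective, and identify \(\alpha\) locally with \(\s_{\Bisp[Y]}|^{-1}\circ \s_{\Bisp}\), then conclude via the fact that an injective local homeomorphism is a homeomorphism onto an open subset. Your closing observation that only the relation \(\s_{\Bisp[Y]}\circ\alpha=\s_{\Bisp}\) is used is also consistent with the paper's argument.
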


\begin{proof}
  Let \(x\in \Bisp\).  By assumption, both source maps are local
  homeomorphisms and \(s_{\Bisp[Y]} \circ \alpha = s_{\Bisp}\).  Let
  \(U_{\Bisp[Y]}\subseteq \Bisp[Y]\) be an open neighbourhood
  of~\(\alpha(x)\) on which~\(s_{\Bisp[Y]}\) is injective.
  Since~\(\alpha\) is continuous, there is an open neighbourhood
  \(U_{\Bisp} \subseteq \Bisp\) with
  \(\alpha(U_{\Bisp}) \subseteq U_{\Bisp[Y]}\).
  Shrinking~\(U_{\Bisp}\) further if necessary, we may arrange that
  \(s_{\Bisp}|_{U_{\Bisp}}\) is injective.  Then
  \(\alpha|_{U_{\Bisp}} \colon U_{\Bisp} \to U_{\Bisp[Y]}\) is equal
  to the map \(s_{\Bisp[Y]}|_{U_{\Bisp[Y]}}^{-1} \circ s_{\Bisp}\),
  and this is a homeomorphism from~\(U_{\Bisp}\) onto an open subset
  of~\(U_{\Bisp[Y]}\).  This implies that~\(\alpha\) is a local
  homeomorphism.  An injective local homeomorphism must be a
  homeomorphism onto an open subset of its codomain.
\end{proof}

The composition of \(2\)\nb-arrows is the obvious composition of
maps.  This is clearly associative, and the
identity maps on groupoid correspondences are units for this
composition.  Thus there is a category \(\Grcat(\Gr,\Gr[H])\)
with arrows \(\Gr[H]\leftarrow\Gr\) as objects and the
\(2\)\nb-arrows between as arrows.

\begin{remark}
  We would still get a bicategory in the same way if we allow all
  \(\Gr[H],\Gr\)-equivariant, continuous maps as \(2\)\nb-arrows.
  The injectivity assumption is only needed for the homomorphism to
  \(\Cst\)\nb-algebras in \longref{Section}{sec:to_Cstar}.
\end{remark}

The composition of arrows \(\Gr[H]\leftarrow \Gr\leftarrow \Gr[K]\)
in~\(\Grcat\) is the construction~\(\Grcomp_{\Gr}\).  Let
\(\Bisp_1,\Bisp_2\colon \Gr[H] \leftleftarrows \Gr\) and
\(\Bisp[Y]_1,\Bisp[Y]_2\colon \Gr \leftleftarrows \Gr[K]\) be
groupoid correspondences and let
\(\alpha\colon \Bisp_1 \Rightarrow \Bisp_2\) and
\(\beta\colon \Bisp[Y]_1 \Rightarrow \Bisp[Y]_2\) be
\(2\)\nb-arrows.  These induce a map
\[
  \alpha\Grcomp_{\Gr} \beta \colon
  \Bisp_1 \Grcomp_{\Gr} \Bisp[Y]_1 \Rightarrow \Bisp_2 \Grcomp_{\Gr}
  \Bisp[Y]_2,
  \qquad
  [(x,y)] \mapsto [(\alpha(x),\beta(y)],
\]
which inherits the properties of being injective,
\(\Gr[H],\Gr[K]\)-equivariant and continuous.  In addition, this
construction is ``functorial'', that is, the composition is a
bifunctor
\[
  \Grcomp_{\Gr}\colon
  \Grcat(\Gr[H],\Gr) \times \Grcat(\Gr,\Gr[K])
  \to \Grcat(\Gr[H],\Gr[K]).
\]
For each groupoid~\(\Gr\), the identity groupoid
correspondence~\(1_{\Gr}\) on~\(\Gr\) is the arrow space of~\(\Gr\)
--~which we also denote by~\(\Gr\)~-- with the obvious left and
right actions of~\(\Gr\) by multiplication; its anchor maps are the
range and source maps \(\rg,\s\colon \Gr\rightrightarrows \Gr^0\).
The following two easy lemmas describe the natural \(2\)\nb-arrows
that complete the bicategory structure of~\(\Grcat\):

\begin{lemma}
  \label{lem:unitor}
  Let \(\Bisp\colon \Gr[H]\leftarrow \Gr\) be a groupoid
  correspondence.  The maps
  \begin{alignat*}{2}
    \Gr[H]\Grcomp_{\Gr[H]}\Bisp&\to \Bisp,&\qquad
    [h,x]&\mapsto h\cdot x,\\
    \Bisp\Grcomp_{\Gr} \Gr&\to \Bisp,&\qquad
    [x,g]&\mapsto x\cdot g,
  \end{alignat*}
  are \(\Gr[H],\Gr\)\nb-equivariant homeomorphisms, which are
  natural for \(\Gr[H],\Gr\)-equivariant continuous maps
  \(\Bisp \to \Bisp'\).
\end{lemma}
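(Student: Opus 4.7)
My plan is to exhibit explicit inverses and then verify the remaining properties by direct computation; there is really no obstacle, only bookkeeping about representatives and the diagonal action, so I expect the argument to be essentially mechanical.

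For the left-hand map \(\lambda\colon [h,x]\mapsto h\cdot x\), I would first check well-definedness. Two pairs \((h,x)\) and \((h',x')\) represent the same class iff \((h',x') = k\cdot (h,x) = (h\cdot k^{-1},\, k\cdot x)\) for some \(k\in \Gr[H]\) with \(\s(k)=\s(h)=\rg(x)\); then \(h'\cdot x' = (hk^{-1})\cdot (kx) = h\cdot x\) by associativity of the commuting actions. Continuity of \(\lambda\) is automatic from continuity of \(\mult\colon \Gr[H]\times_{\Gr[H]^0}\Bisp\to\Bisp\) together with the universal property of the quotient topology on \(\Gr[H]\Grcomp_{\Gr[H]}\Bisp\).

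For the inverse I take \(\nu\colon \Bisp\to \Gr[H]\Grcomp_{\Gr[H]}\Bisp\), \(x\mapsto [\rg(x),x]\), where \(\rg(x)\) is viewed as a unit in \(\Gr[H]^0\subseteq \Gr[H]\); the map \(x\mapsto(\rg(x),x)\) is continuous into \(\Gr[H]\times_{\Gr[H]^0}\Bisp\), and then one composes with the orbit projection. The equation \(\lambda\circ\nu=\id_{\Bisp}\) is immediate since \(\rg(x)\cdot x = x\). For \(\nu\circ\lambda=\id\) applied to \([h,x]\) one obtains \([\rg(h\cdot x),\,h\cdot x]=[\rg(h),\,h\cdot x]\); taking \(k=h\) in the diagonal action is legal because \(\s(h)=\rg(x)\), and \(h\cdot (h,x) = (hh^{-1},\,h\cdot x) = (\rg(h),\,h\cdot x)\), so \([\rg(h),\,h\cdot x]=[h,x]\).

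Equivariance \(\lambda(h_1\cdot[h,x])=h_1\cdot\lambda([h,x])\) and \(\lambda([h,x]\cdot g)=\lambda([h,x])\cdot g\) is immediate from associativity of the two commuting actions. Naturality against a \(\Gr[H],\Gr\)-equivariant continuous map \(f\colon\Bisp\to\Bisp'\) says \(\lambda'\circ(\id_{\Gr[H]}\Grcomp_{\Gr[H]}f)=f\circ\lambda\); evaluated on \([h,x]\) this reduces to \(h\cdot f(x)=f(h\cdot x)\), which is exactly equivariance of \(f\), using the formula \((\id_{\Gr[H]}\Grcomp_{\Gr[H]}f)([h,x])=[h,f(x)]\) recalled just before the lemma. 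The right unitor \([x,g]\mapsto x\cdot g\) is handled by the symmetric argument with inverse \(x\mapsto[x,\s(x)]\). The only real content, as already noted, is that the units \(\Gr[H]^0\subseteq \Gr[H]\) and \(\Gr^0\subseteq \Gr\) act trivially under the multiplication, so that the diagonal action collapses exactly one coordinate of the fibre product.
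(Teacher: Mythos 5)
Your proof is correct, and it differs from the paper's in one genuine respect: where you exhibit explicit inverses \(x\mapsto[\rg(x),x]\) and \(x\mapsto[x,\s(x)]\) and verify their continuity directly (which implicitly uses continuity of the unit inclusion \(\Gr[H]^0\subseteq\Gr[H]\), harmless for étale groupoids), the paper never writes down an inverse at all. It checks only that the multiplication maps are bijective, continuous and \(\Gr[H],\Gr\)-equivariant, and then invokes \longref{Lemma}{lem:2-arrow_local_homeo}, by which any equivariant continuous map between groupoid correspondences is automatically a local homeomorphism (because the source anchor maps are local homeomorphisms), so a bijective one is a homeomorphism. The paper's route is shorter once that lemma is in place and fits the bicategorical bookkeeping, since it identifies the unitors directly as invertible \(2\)\nb-arrows; your route is more elementary and self-contained, at the cost of the explicit well-definedness, inverse, and continuity checks you carry out. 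Both the well-definedness computation \((hk^{-1})\cdot(kx)=h\cdot x\), the cancellation \([\rg(h),h\cdot x]=[h,x]\) via the diagonal action with \(k=h\), and the reduction of naturality to equivariance of \(f\) are exactly right; the only cosmetic omission is that equivariance also includes the (trivial) compatibility of anchor maps, \(\s(h\cdot x)=\s([h,x])\) and \(\rg(h\cdot x)=\rg([h,x])\).
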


\begin{proof}
  It is easy to see that these multiplication maps are bijective,
  continuous and \(\Gr[H],\Gr\)-equivariant.  Then they are
  isomorphisms of correspondences by
  \longref{Lemma}{lem:2-arrow_local_homeo}.  The naturality statement is
  obvious.
\end{proof}

\begin{lemma}
  \label{lem:associator}
  Let \(\Gr_i\) for \(1\le i \le 4\) be étale groupoids.  Let
  \(\Bisp_i\colon \Gr_i \leftarrow \Gr_{i+1}\) for \(1\le i \le 3\)
  be correspondences.  The map
  \[
    \mathrm{assoc}\colon \Bisp_1\Grcomp_{\Gr_1} (\Bisp_2\Grcomp_{\Gr_2}\Bisp_3)
    \to (\Bisp_1\Grcomp_{\Gr_1} \Bisp_2)\Grcomp_{\Gr_2} \Bisp_3,\qquad
    [x_1,[x_2,x_3]]\mapsto [[x_1,x_2],x_3],
  \]
  is a \(\Gr_1,\Gr_4\)-equivariant homeomorphism, which is natural
  with respect to \(\Gr_i,\Gr_{i+1}\)-equivariant continuous maps
  \(\alpha_i\colon \Bisp_i \to \Bisp'_i\) for \(1\le i \le 3\); that
  is, the following square commutes:
  \[
    \begin{tikzcd}[column sep=6em]
      \Bisp_1\Grcomp_{\Gr_1} (\Bisp_2\Grcomp_{\Gr_2}\Bisp_3)
      \ar[r, Rightarrow, "\alpha_1 \Grcomp_{\Gr_1} (\alpha_2\Grcomp_{\Gr_2}\alpha_3)"]
      \ar[d, Rightarrow,  "\mathrm{assoc}"] &
      \Bisp_1'\Grcomp_{\Gr_1} (\Bisp_2'\Grcomp_{\Gr_2}\Bisp_3')
      \ar[d, Rightarrow,  "\mathrm{assoc}"] \\
      (\Bisp_1\Grcomp_{\Gr_1} \Bisp_2)\Grcomp_{\Gr_2} \Bisp_3
      \ar[r, Rightarrow,  "(\alpha_1\Grcomp_{\Gr_1} \alpha_2)\Grcomp_{\Gr_2}
      \alpha_3"] &
      (\Bisp_1'\Grcomp_{\Gr_1} \Bisp_2')\Grcomp_{\Gr_2} \Bisp_3'.
    \end{tikzcd}
  \]
\end{lemma}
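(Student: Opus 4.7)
The plan is to realise both correspondences as quotients of the common triple fibre product
\[
  T \defeq \Bisp_1\times_{\s,\Gr_1^0,\rg}\Bisp_2\times_{\s,\Gr_2^0,\rg}\Bisp_3
\]
by the diagonal action of \(\Gr_1\times\Gr_2\) given by \((g_1,g_2)\cdot(x_1,x_2,x_3)\defeq(x_1\cdot g_1^{-1},g_1\cdot x_2\cdot g_2^{-1},g_2\cdot x_3)\). The map \(\mathrm{assoc}\) then corresponds to the identity on \(T/(\Gr_1\times\Gr_2)\), and once this is in place everything else is essentially bookkeeping.

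First I would define a continuous surjection \(q_L\colon T \to \Bisp_1\Grcomp_{\Gr_1}(\Bisp_2\Grcomp_{\Gr_2}\Bisp_3)\) by \((x_1,x_2,x_3)\mapsto[x_1,[x_2,x_3]]\), and analogously \(q_R\colon T\to(\Bisp_1\Grcomp_{\Gr_1}\Bisp_2)\Grcomp_{\Gr_2}\Bisp_3\) by \((x_1,x_2,x_3)\mapsto[[x_1,x_2],x_3]\). Each is the composition of two orbit space projections, each of which is a surjective local homeomorphism by \longref{Lemma}{lem:basic_orbit_lh} applied to the intermediate correspondences (using \longref{Proposition}{pro:groupoid_correspondences_composition} to see that the composed correspondences are indeed correspondences, so that their right actions are basic). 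Hence both \(q_L\) and \(q_R\) are surjective local homeomorphisms. A direct inspection of the defining equivalence relations shows that both identify exactly the \(\Gr_1\times\Gr_2\)-orbits in \(T\); thus there is a unique bijection \(\mathrm{assoc}\) with \(\mathrm{assoc}\circ q_L = q_R\) and, similarly, a unique bijection in the other direction with \(\mathrm{assoc}^{-1}\circ q_R = q_L\).

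Since \(q_R\) is continuous and \(q_L\) is a surjective local homeomorphism, the universal property of the quotient gives that \(\mathrm{assoc}\) is continuous. Symmetrically, \(\mathrm{assoc}^{-1}\) is continuous, so \(\mathrm{assoc}\) is a homeomorphism; alternatively, one may invoke \longref{Lemma}{lem:2-arrow_local_homeo} directly. Equivariance with respect to the left \(\Gr_1\)\nb-action and right \(\Gr_4\)\nb-action is immediate from the formulas \(h\cdot[x_1,[x_2,x_3]]=[h\cdot x_1,[x_2,x_3]]\) and \([[x_1,x_2],x_3]\cdot k=[[x_1,x_2],x_3\cdot k]\), since the two outer actions only touch \(x_1\) and \(x_3\), which are unaffected by the regrouping.

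For naturality, tracing \([x_1,[x_2,x_3]]\) around the square in either direction produces \([[\alpha_1(x_1),\alpha_2(x_2)],\alpha_3(x_3)]\), so the diagram commutes on representatives and hence on all of \(\Bisp_1\Grcomp_{\Gr_1}(\Bisp_2\Grcomp_{\Gr_2}\Bisp_3)\). The only substantive point requiring care is the verification that \(q_L\) and \(q_R\) really identify the same equivalence classes; that is where the two different orders of quotienting must be reconciled, and it proceeds by showing that if two triples agree under one presentation then a combination of a \(\Gr_1\)\nb-move and a \(\Gr_2\)\nb-move exhibits the agreement under the other. This is the routine but slightly fiddly step; everything else reduces to the general principle that iterated quotients by a product group action may be taken in either order.
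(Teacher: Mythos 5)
The paper gives no proof of this lemma at all --- it is the second of the ``two easy lemmas'' and only \longref{Lemma}{lem:unitor} receives a written proof --- so your argument fills an omission rather than paralleling a given one. Your route is correct: both triple composites are quotients of the common fibre product \(T\) by the diagonal action of the two middle groupoids (these are \(\Gr_2\) and \(\Gr_3\) in the numbering of the statement, whatever the subscripts on \(\Grcomp\) suggest); the two maps \(q_L\), \(q_R\) are surjective local homeomorphisms, hence open quotient maps; their fibres are computed directly to be exactly these orbits, so the induced mutually inverse bijections are continuous; and equivariance and naturality are checked on representatives, as you do. Two points deserve attention. First, your citation for the local-homeomorphism claim is slightly off target: \longref{Lemma}{lem:basic_orbit_lh} must be applied to the \emph{diagonal} action of the middle groupoid on the relevant fibre product, so you need that action to be basic, which is not what \longref{Proposition}{pro:groupoid_correspondences_composition} gives you (that proposition concerns the right action of the outer groupoid on the composite). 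Basicness of the diagonal action does hold, because the bracket of \longref{Lemma}{def:innerprodofcorr} recovers the acting element continuously (\(g = \braket{x\cdot g^{-1}}{x}\) for the action \(g\cdot(x,y)=(x\cdot g^{-1},g\cdot y)\)); this is the same fact the paper itself uses implicitly when it invokes \longref{Lemma}{lem:basic_orbit_lh} for these quotient maps in \longref{Lemma}{lem:exchange_orbit_fibreprod} and \longref{Proposition}{pro:groupoid_correspondences_composition}, and for the first quotient step one may alternatively combine \longref{Lemma}{lem:pullback_local_homeo} with \longref{Lemma}{lem:basic_orbit_lh} as in \longref{Lemma}{lem:exchange_orbit_fibreprod}. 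Second, the shortcut you mention in passing is presumably the intended proof, matching the pattern of \longref{Lemma}{lem:unitor}: once \(\mathrm{assoc}\) is known to be well defined, bijective, continuous and \(\Gr_1,\Gr_4\)-equivariant, \longref{Lemma}{lem:2-arrow_local_homeo} upgrades it to a homeomorphism, so the symmetric argument for continuity of the inverse is not needed; your quotient picture buys you a self-contained verification of well-definedness and continuity in both directions at the cost of the extra bookkeeping about the two iterated quotients.
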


\begin{proposition}
  \label{pro:groupoid_bicategory}
  The data above defines a bicategory~\(\Grcat\).
\end{proposition}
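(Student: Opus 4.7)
The plan is to assemble the bicategorical data from the preceding lemmas and then verify the axioms. \longref{Proposition}{pro:groupoid_correspondences_composition} shows that $\Grcomp_{\Gr}$ produces a correspondence; \longref{Lemmas}{lem:unitor} and \ref{lem:associator} provide the unitors and associator as natural $2$-isomorphisms; and \longref{Lemma}{lem:2-arrow_local_homeo} guarantees that any injective equivariant continuous map between correspondences is automatically a homeomorphism onto an open subset, so that the bijections in the structural lemmas really are $2$-isomorphisms. What remains is to check that each hom-category is a category, that $\Grcomp_{\Gr}$ is a bifunctor on $2$-arrows, and that the pentagon and triangle coherence axioms hold.

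The first two points are immediate. Composites of injective equivariant continuous maps are again such, and the identity map serves as a unit, so $\Grcat(\Gr[H], \Gr)$ is a category. The horizontal composition $[x, y] \mapsto [\alpha(x), \beta(y)]$ is well-defined by equivariance of $\alpha$ and $\beta$, manifestly preserves vertical composition, and sends identities to identities, so $\Grcomp_{\Gr}$ is a bifunctor $\Grcat(\Gr[H], \Gr) \times \Grcat(\Gr, \Gr[K]) \to \Grcat(\Gr[H], \Gr[K])$.

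For coherence I would work on representatives of orbits. The triangle axiom reduces to the identity $[x \cdot g, y] = [x, g \cdot y]$ in $\Bisp \Grcomp_{\Gr} \Bisp[Y]$, which holds by the very definition of the diagonal $\Gr$\nb-quotient. The pentagon axiom amounts to the assertion that the two ways of re-bracketing $[x_1, [x_2, [x_3, x_4]]]$ to $[[[x_1, x_2], x_3], x_4]$ by successive applications of the associator agree on representatives; this is a strict equality in the iterated quotient, so the pentagon commutes.

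The main potential obstacle is not a deep technical one, but rather a bookkeeping point: one must confirm that every structural map is well-defined on orbits, independent of the chosen representatives. This is handled uniformly by the fact that all the coherence isomorphisms, viewed at the level of orbits of tuples in iterated fibre products, act as the identity on the same underlying data, while continuity and the homeomorphism property are already supplied by \longref{Lemma}{lem:2-arrow_local_homeo}. Thus once the data is assembled, the axioms reduce to manipulations of bracket notation, and no genuinely new ingredient is needed.
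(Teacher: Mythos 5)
Your proposal is correct and follows essentially the same route as the paper: the data is assembled from \longref{Proposition}{pro:groupoid_correspondences_composition} and \longref{Lemmas}{lem:2-arrow_local_homeo}, \ref{lem:unitor}, \ref{lem:associator}, and the paper's proof simply declares the coherence diagrams routine to verify. You merely spell out that routine check (triangle reducing to \([x\cdot g,y]=[x,g\cdot y]\), pentagon being a strict re-bracketing on representatives), which is consistent with the paper's argument.
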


\begin{proof}
  It is trivial to check that the coherence diagrams for a
  bicategory commute (these diagrams are shown, for instance,
  in \cites{Benabou:Bicategories, Leinster:Basic_Bicategories}).
\end{proof}

\begin{remark}
  General properties of groupoids, groupoid actions and groupoid
  principal bundles are shown in \cites{Arabidze:Thesis,
    Meyer-Zhu:Groupoids} in a rather abstract setting.  When we
  apply the definitions and results in~\cite{Arabidze:Thesis} to the
  category of locally compact, topological spaces with local
  homeomorphisms as partial covers, we also get a construction of a
  bicategory of étale groupoids and étale groupoid correspondences.
  Here we only require for an étale groupoid correspondence that the
  right action should be basic and that its anchor map should be a
  local homeomorphism.  In this article, we have added the
  assumptions that the object spaces of our groupoids and the orbit
  spaces of our groupoid correspondences for the right actions should be
  Hausdorff.  Both assumptions are needed for the homomorphism to
  \(\Cst\)\nb-algebras in \longref{Section}{sec:to_Cstar}.  The fact that these
  Hausdorffness assumptions define a subbicategory does not follow
  from the general theory in~\cite{Meyer-Zhu:Groupoids}, unless we
  also require all arrow spaces of groupoids to be Hausdorff.  This,
  however, is too much for certain applications.
\end{remark}

To prepare for the next section, we briefly recall how to define the
bicategory \(\Corr\) of \(\Cst\)\nb-correspondences
(see~\cite{Buss-Meyer-Zhu:Higher_twisted}).  Its objects are
\(\Cst\)\nb-algebras.  Its arrows \(A\leftarrow B\) for two
\(\Cst\)\nb-algebras \(A\) and~\(B\) are
\(A\)-\(B\)-correspondences, that is, Hilbert \(B\)\nb-modules with
a nondegenerate \Star{}representation of~\(A\) by adjointable
operators.  If \(\Hilm,\Hilm[F]\colon A \leftleftarrows B\) are two
such \(\Cst\)\nb-correspondences, then a \(2\)\nb-arrow
\(\alpha\colon \Hilm \Rightarrow \Hilm[F]\) is an \(A,B\)-bimodule
map \(\alpha\colon \Hilm \to \Hilm[F]\) that is isometric in the
sense that \(\braket{\alpha(x)}{\alpha(y)} = \braket{x}{y}\) for all
\(x,y\in \Hilm\).  These isometric bimodule maps are composed as
maps.  This makes the arrows \(A\leftarrow B\) and their
\(2\)\nb-arrows into a category.  The composition of arrows is the
usual completed tensor product of \(\Cst\)\nb-correspondences.  This
is clearly a bifunctor for isometric bimodule maps, as
required for a bicategory.

The unit arrow~\(1_A\) for a \(\Cst\)\nb-algebra is~\(A\) itself,
viewed as a correspondence \(A\leftarrow A\) in the obvious way,
using the bimodule structure by left and right multiplication and
the \(A\)\nb-valued inner product \(\braket{x}{y} \defeq x^* y\) for
\(x,y\in A\).  There are natural isomorphisms of
\(\Cst\)\nb-correspondences
\[
  A\otimes_A \Hilm \cong \Hilm,\qquad
  \Hilm \otimes_B B \cong \Hilm,\qquad
  (\Hilm \otimes_B \Hilm[F]) \otimes_C \Hilm[G]
  \cong \Hilm \otimes_B (\Hilm[F] \otimes_C \Hilm[G])
\]
for three composable \(\Cst\)\nb-correspondences
\(\Hilm\colon A\leftarrow B\), \(\Hilm[F]\colon B\leftarrow C\),
\(\Hilm[G]\colon C\leftarrow D\).  It is easy to check that these
are natural with respect to the \(2\)\nb-arrows above and make the
diagrams required for a bicategory commute (see
\cites{Benabou:Bicategories, Leinster:Basic_Bicategories}).

Note that \(\Corr\) is defined
in~\cite{Buss-Meyer-Zhu:Higher_twisted} using only isomorphisms
of \(\Cst\)\nb-correspondences as \(2\)\nb-arrows.  Here we allow
isometries that are not invertible, not even adjointable.  This is
useful in some situations.  In particular, non-invertible isometries
of \(\Cst\)\nb-\alb{}correspondences are crucial
in~\cites{Meyer:Unbounded} or to treat partial actions in
bicategorical terms.  In this article, we allow \(2\)\nb-arrows that
are not invertible because this does not cause any extra problems
and it allows us to prove a stronger statement.

A \(\Cst\)\nb-correspondence \(\Hilm\colon A\leftarrow B\) is called
\emph{proper} if the left action factors through the ideal of
compact operators, \(A\to \Comp(\Hilm)\).  The proper
\(\Cst\)\nb-correspondences form a subbicategory \(\Corr_\prop\),
that is, identity correspondences are proper and the composite of
two proper correspondences is again proper.

\section{The homomorphism to \texorpdfstring{$\Cst$}{C*}-algebras}
\label{sec:to_Cstar}

In this section, we first recall how to define the
\(\Cst\)\nb-algebra of an (étale) groupoid.  Then we turn a groupoid
correspondence into a \(\Cst\)\nb-correspondence between the
groupoid \(\Cst\)\nb-algebras.  We prove that this is part of a
homomorphism of bicategories \(\Grcat\to\Corr\) between the
bicategories of groupoid and \(\Cst\)\nb-correspondences.

Both \(\Cst(\Gr)\) for a groupoid~\(\Gr\) and \(\Cst(\Bisp)\) for a
groupoid correspondence~\(\Bisp\) are defined using a certain space
of ``quasi-continuous'' functions.  We define this in the generality
of a locally compact, locally Hausdorff space~\(X\) to treat both
cases simultaneously.  If $V\subseteq X$ is open and Hausdorff, then
we extend a function \(f\in\Contc(V)\) to~\(X\) by letting
\(f(g)=0\) for all \(g\in X\setminus V\).  Let $\ContS(X)$ be the
linear span of functions on~\(X\) of this form.  If~\(X\) is
Hausdorff, then $\ContS(X)$ is equal to the space \(\Contc(X)\) of
continuous, compactly supported functions on~\(X\).  If~\(X\) is not
Hausdorff, then there may be too few continuous functions on it, and
we have to use the space \(\ContS(X)\) instead.  The following
proposition allows us to build \(\ContS(X)\) using a specific
covering by locally compact Hausdorff open subsets:

\begin{proposition}[\cite{Exel:Inverse_combinatorial}*{Proposition~3.10}]
  \label{pro:covering_decomposition}
  Let~\(X\) be a topological space.  Let~\(U_i\) for \(i\in I\) be
  open subsets that are locally compact and Hausdorff.  Assume
  \(X= \bigcup_{i\in I} U_i\).  Then $\ContS(X)$ is equal to the
  linear span of the subspaces $\Contc(U_i)$ for \(i\in I\).
\end{proposition}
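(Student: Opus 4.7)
The containment \(\mathrm{span}\{\Contc(U_i) : i \in I\} \subseteq \ContS(X)\) is immediate from the definition of~\(\ContS(X)\), since each~\(U_i\) is open and Hausdorff by assumption. For the converse, it suffices to show that the extension by zero to~\(X\) of any \(f \in \Contc(V)\), for \(V \subseteq X\) open and Hausdorff, is a finite linear combination of extensions by zero of functions in \(\Contc(U_i)\). The plan is a standard partition of unity argument, carried out carefully to accommodate the non-Hausdorffness of~\(X\).

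First I would set \(K \defeq \supp(f)\); this is compact in~\(V\) and hence covered by finitely many of the~\(U_i\), say \(U_{i_1},\dots,U_{i_n}\). Because~\(V\) is an open Hausdorff subspace of the locally compact, locally Hausdorff space~\(X\), it is itself locally compact Hausdorff, so the standard existence theorem yields a partition of unity subordinate to the cover \(\{U_{i_j} \cap V\}_{j=1}^{n}\) of~\(K\): functions \(\psi_j \in \Contc(V)\) with \(\supp(\psi_j) \subseteq U_{i_j} \cap V\) and \(\sum_{j=1}^{n} \psi_j = 1\) on~\(K\). Setting \(g_j \defeq \psi_j f \in \Contc(V)\), we have \(f = \sum_j g_j\) as functions on~\(V\).

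The remaining work is to show each~\(g_j\) comes, via extension by zero, from some \(\tilde g_j \in \Contc(U_{i_j})\) whose extension by zero to~\(X\) coincides with that of~\(g_j\). Define~\(\tilde g_j\) on~\(U_{i_j}\) by \(\tilde g_j = g_j\) on \(U_{i_j} \cap V\) and \(\tilde g_j = 0\) on \(U_{i_j} \setminus V\). Since \(\supp(g_j) \subseteq U_{i_j} \cap V\) is compact in the Hausdorff space~\(U_{i_j}\), it is closed there, so \(U_{i_j} \cap V\) together with \(U_{i_j} \setminus \supp(g_j)\) form an open cover of~\(U_{i_j}\) on which~\(\tilde g_j\) is continuous; hence \(\tilde g_j \in \Contc(U_{i_j})\). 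A case analysis on the four regions \(U_{i_j} \cap V\), \(U_{i_j} \setminus V\), \(V \setminus U_{i_j}\), and \(X \setminus (U_{i_j} \cup V)\) then shows that the extensions of \(\tilde g_j\) and of~\(g_j\) by zero to~\(X\) agree. Summing over~\(j\) expresses~\(f\) as a linear combination of elements of the \(\Contc(U_i)\), as required. The main obstacle is precisely this matching of the two extensions: because~\(X\) need not be Hausdorff, one cannot deduce agreement from continuity considerations alone, and the identification must be verified directly on the four regions.
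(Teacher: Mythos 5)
Your argument is correct and is essentially the paper's own proof: cover the compact support of \(f\in\Contc(V)\) by finitely many \(U_{i_j}\), take a subordinate partition of unity on the locally compact Hausdorff space \(V\), and write \(f=\sum_j \psi_j f\) with each summand compactly supported in \(U_{i_j}\cap V\). Your extra verification that each \(\psi_j f\) defines an element of \(\Contc(U_{i_j})\) whose extension by zero to \(X\) agrees with that of \(\psi_j f\in\Contc(V)\) is a detail the paper leaves implicit, and it is carried out correctly.
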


\begin{proof}
  Let $V$ be a locally compact Hausdorff open subset of~$X$ and
  $f\in \Contc(V)$.  Since the support of~$f$ is compact, it is
  covered by finitely many of the open subsets~\(U_i\).  In
  addition, this covering has a finite partition of unity.  Then we
  may write \(f = \sum_{j=1}^n f_j\) with
  \(f_j \in \Contc(U_{i_j})\).
\end{proof}

\begin{definition}
  A \emph{slice} of a groupoid~\(\Gr\) is an open subset
  \(V\subseteq\Gr\) such that \(\s|_V\) and~\(\rg|_V\) are
  injective.  Let \(\Gr\) and~\(\Gr[H]\) be groupoids.  A
  \emph{slice} of a groupoid correspondence
  \(\Bisp\colon \Gr[H]\leftarrow\Gr\) is an open subset
  \(V\subseteq\Bisp\) such that \(\s|_V\colon V\to \Gr^0\) and
  \(\Qu|_V\colon V\to \Bisp/\Gr\) are injective.
\end{definition}

The name ``slice'' comes from~\cite{Exel:Inverse_combinatorial}.  We
find this name more friendly than the more common name
``bisections''.

When we view a
groupoid~\(\Gr\) as the identity correspondence over itself, then
the range map induces a homeomorphism \(\Gr/\Gr \cong \Gr^0\).
Therefore, \(\Qu\colon \Gr \to \Gr/\Gr\) is equivalent to the range
map and~\(\Gr\) as a groupoid and as a groupoid correspondence have
the same slices.  If~\(\Bisp\) is a groupoid correspondence,
then \(\s\colon \Bisp \to \Gr^0\) is a local homeomorphism by
assumption and \(\Qu\colon \Bisp\to\Bisp/\Gr\) is one by
\longref{Lemma}{lem:basic_orbit_lh}.  Therefore, any point in~\(\Bisp\)
has an open neighbourhood that is a slice.  In other words, the
slices cover~\(\Bisp\).  Then
\longref{Proposition}{pro:covering_decomposition} allows us to write any
element of \(\ContS(\Bisp)\) as a finite sum \(\sum_{i=1}^n f_i\)
for functions \(f_i \in \Contc(V_i)\) and slices~\(V_i\) for
\(i=1,\dotsc,n\).  The special case when \(\Bisp=1_{\Gr} \cong \Gr\)
is itself a groupoid is already well known.

We define a \Star{}algebra structure on $\ContS(\Gr)$ as
in~\cite{Khoshkam-Skandalis:Regular}.  Namely, if
$\xi, \eta \in \ContS(\Gr)$, let
\begin{align*}
  \xi * \eta(g) &= \sum_{h \in \Gr^{\rg(g)}} \xi(h)\eta(h^{-1}g),\\
  \xi^*(g) &= \overline{\xi(g^{-1})}.
\end{align*}
We recall why this is well defined.  Assume $\xi\in\Contc(V)$ and
$\eta\in\Contc(W)$ for slices $V$ and~$W$.  Then
\(V\cdot W \defeq \setgiven{g\cdot h}{g\in V,\ h\in W,\
  \s(g)=\rg(h)}\) is a slice as well, and so is
\(V^* \defeq \setgiven{g^{-1}}{g\in V}\).  The formula for
\(\xi*\eta\) above simplifies to \(\xi * \eta(g) = \xi(h)\eta(k)\)
if there are $h \in V$ and $k \in W$ with \(h\cdot k = g\), and
\(\xi*\eta(g)=0\) otherwise.  As a result,
\(\xi*\eta \in \Contc(V\cdot W) \subseteq \ContS(\Gr)\).  Since
\(\ContS(\Gr)\) is spanned by functions in \(\Contc(V)\) for
slices~\(V\), this implies that \(\xi*\eta \in \ContS(\Gr)\) for
all \(\xi,\eta\in \ContS(\Gr)\).  Similarly,
$\xi^* \in \Contc(V^{-1})$ if \(\xi\in \Contc(V)\), and then
\(\xi^*\in \ContS(\Gr)\) for all \(\xi\in \ContS(\Gr)\).

Routine computations show that the convolution above is bilinear and
associative and that \(\xi\mapsto \xi^*\) is a conjugate-linear
involution with \((\xi*\eta)^* = \eta^* * \xi^*\).  Thus
\(\ContS(\Gr)\) is a \Star{}algebra.

Next, we recall why a maximal \(\Cst\)\nb-seminorm
on~\(\ContS(\Gr)\) exists.  The subset \(\Gr^0\subseteq \Gr\) is a
slice, called the unit slice.  So
\(\Contc(\Gr^0) \subseteq \ContS(\Gr)\).  The convolution and
involution on \(\ContS(\Gr)\) restrict to the usual pointwise
multiplication and pointwise involution on \(\Contc(\Gr^0)\).  Since
\(\Contc(\Gr^0)\) is the union of the \(\Cst\)\nb-subalgebras
\(\Cont_0(U)\) for relatively compact, open subsets
\(U\subseteq \Gr^0\), any \Star{}representation of \(\Contc(\Gr^0)\)
on a Hilbert space is bounded by the supremum norm.  Therefore, any
\(\Cst\)\nb-seminorm on \(\Contc(\Gr^0)\) is bounded by the usual
supremum norm.
If \(\xi\in \Contc(V)\) for a slice~\(V\), then
\(\xi * \xi^* \in \Contc(V\cdot V^{-1}) \subseteq \Contc(\Gr^0)\),
and so
\[
  \norm{\xi}
  = \norm{\xi*\xi^*}^{1/2}
  \le \norm{\xi*\xi^*}_\infty^{1/2}
  = \norm{\xi}_\infty.
\]
Any element \(\xi\in\ContS(\Gr)\) is a finite linear combination of
such functions on slices.  Therefore, there is \(C>0\) with
\(\norm{\xi}\le C\) for all \(\Cst\)\nb-seminorms
on~\(\ContS(\Gr)\).  Therefore, the supremum of the set of
\(\Cst\)\nb-seminorms on~\(\ContS(\Gr)\) exists.  This is again a
\(\Cst\)\nb-seminorm, and clearly the maximal such.  Actually, it is
a \(\Cst\)\nb-norm and not just a \(\Cst\)\nb-seminorm because of
the regular representations, but this will not be crucial in what
follows.

\begin{definition}
  The groupoid $\Cst$-algebra $\Cst(\Gr)$ of~$\Gr$ is the completion
  of \(\ContS(\Gr)\) in the largest \(\Cst\)\nb-norm.
\end{definition}

Now let \(\Gr\) and~\(\Gr[H]\) be groupoids and let
$\Bisp\colon\Gr[H]\leftarrow\Gr$ be a groupoid correspondence.  We
are going to build a \(\Cst\)\nb-correspondence
$\Cst(\Bisp)\colon\Cst(\Gr[H])\leftarrow\Cst(\Gr)$.  That is,
\(\Cst(\Bisp)\) is a Hilbert \(\Cst(\Gr)\)-module with a
nondegenerate left action of \(\Cst(\Gr[H])\).  The first result of
this kind for Morita equivalences of Hausdorff locally compact
groupoids with Haar systems was proven by Muhly--Renault--Williams
in~\cite{Muhly-Renault-Williams:Equivalence}.  The Hausdorffness
assumption was soon removed by Renault (see
\cite{Renault:Representations}*{Corollaire~5.4}).  More general
groupoid correspondences between Hausdorff locally compact groupoids
with Haar systems were treated by Holkar~\cite{Holkar:Thesis}.  But
we are not aware that this construction has been carried over to
groupoid correspondences between possibly non-Hausdorff étale
groupoids as we need it.  Therefore, we give the details of this
construction.  Since we only work in the étale case, this is much
easier than the constructions in the papers mentioned above.

We will define \(\Cst(\Bisp)\) as a completion of \(\ContS(\Bisp)\)
in a suitable norm.  The algebraic structure of the correspondence
is easy to write down on the dense subspaces of
\(\ContS\)-functions.  Namely, let $\xi$, $\eta \in \ContS(\Bisp)$,
$\gamma \in \ContS(\Gr)$, $\zeta \in \ContS(\Gr[H])$ and
\(x\in\Bisp\), \(g\in\Gr\).  Then we define
\begin{align}
  \label{eq:convolution_XG}
  \xi * \gamma (x)
  &= \sum_{g\in \Gr^{\s(x)}} \xi(x\cdot g) \gamma(g^{-1}),\\
  \label{eq:braket_XX}
  \braket{\xi}{\eta}(g)
  &= \sum_{\setgiven{x \in \Bisp}{\s(x)=\rg(g)}}
    \overline{\xi(x)}\eta(x\cdot g),\\
  \label{eq:convolution_GX}
  \zeta * \xi(x)
  &= \sum_{h \in \Gr[H]^{\rg(x)}} \zeta(h)\xi(h^{-1}x).
\end{align}
We first check that this is well defined, that is, that these
functions are finite linear combinations of \(\Contc\)\nb-functions
on slices.

\begin{lemma}
  \label{lem:corr_structure}
  Let \(\xi\in\Contc(V_1)\), \(\eta\in\Contc(V_2)\),
  \(\gamma\in\Contc(W)\), and \(\zeta\in\Contc(Z)\) for slices
  \(V_1,V_2\subseteq \Bisp\), \(W\subseteq \Gr\) and
  \(Z\subseteq \Gr[H]\).  The following subsets are also slices:
  \begin{align*}
    V_1 W
    &\defeq \setgiven{x g}{x\in V_1,\ g\in W,\ \s(x) = \rg(g)}
      \subseteq \Bisp,\\
    \braket{V_1}{V_2}
    &\defeq \setgiven{\braket{x_1}{x_2}}{x_1\in V_1,\ x_2\in V_2,\ \Qu(x_1)=\Qu(x_2)}
      \subseteq \Gr,\\
    Z V_1
    &\defeq \setgiven{h x}{h\in Z,\ x\in V_1,\ \s(h) = \rg(x)}
      \subseteq \Bisp.
  \end{align*}
  For \(y\in V_1 W\), there are unique \(x\in V_1\), \(g\in W\) with
  \(x g = y\), and \(\xi*\gamma \in \Contc(V_1 W)\) with
  \((\xi*\gamma)(y) = \xi(x) \gamma(g)\).  For
  \(g\in \braket{V_1}{V_2}\), there are unique \(x_1\in V_1\),
  \(x_2\in V_2\) with \(g= \braket{x_1}{x_2}\), and
  \(\braket{\xi}{\eta} \in \Contc(\braket{V_1}{V_2})\) with
  \(\braket{\xi}{\eta}(g) = \overline{\xi(x_1)} \eta(x_2)\).  For
  \(y\in Z V_1\), there are unique \(h\in Z\), \(x\in V_1\) with
  \(y= h x\), and \(\zeta*\xi \in \Contc(Z V_1)\) with
  \((\zeta*\xi)(y) = \zeta(h) \xi(x)\).
\end{lemma}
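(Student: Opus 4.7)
The plan is to treat the three claims in parallel, since each follows the same four-step pattern: (a) verify openness of the composite subset; (b) show the corresponding factorisation is unique; (c) deduce the slice property; and (d) identify the convolution or bracket as a continuous, compactly supported function on that slice.

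For openness, the key observation is that each of the three maps producing these subsets is a local homeomorphism, and hence open. For the right multiplication this is \longref{Lemma}{lem:action_local_homeo}; for the left multiplication the same argument applies since \(\rg,\s\colon \Gr[H]\rightrightarrows\Gr[H]^0\) are local homeomorphisms, combined with \longref{Lemma}{lem:pullback_local_homeo}; and for the bracket map it is \longref{Proposition}{pro:innprod}. Thus \(V_1 W\), \(Z V_1\), and \(\braket{V_1}{V_2}\) are the images of the open subsets \(V_1 \times_{\s,\Gr^0,\rg} W\), \(Z \times_{\s,\Gr[H]^0,\rg} V_1\), and \((V_1 \times V_2) \cap (\Bisp \times_{\Bisp/\Gr} \Bisp)\) under these open maps.

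For uniqueness of factorisation and the slice property, the main tool is the compatibility of the anchor maps with the actions: \(\s(x g) = \s(g)\) and \(\Qu(x g) = \Qu(x)\) for the right \(\Gr\)-action; \(\rg(h x) = \rg(h)\), \(\s(h x) = \s(x)\), and \(\Qu(h x) = h \cdot \Qu(x)\) for the left \(\Gr[H]\)-action (with anchor \(\rg_*\colon \Bisp/\Gr \to \Gr[H]^0\)); and \(\rg(\braket{x_1}{x_2}) = \s(x_1)\), \(\s(\braket{x_1}{x_2}) = \s(x_2)\), together with \(\Qu(x_1) = \Qu(x_2)\), for the bracket (\longref{Proposition}{pro:innprod}). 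In each case, two potential factorisations reduce to one by applying a suitable anchor to peel off one factor using its slice hypothesis, then using the other slice hypothesis or freeness of the right \(\Gr\)-action to identify the remaining factor. The slice property for the composite subset is checked by the same mechanism.

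Once uniqueness is in hand, the bijective restrictions of the three local homeomorphisms above become homeomorphisms onto their open images. Each sum defining \(\xi * \gamma\), \(\braket{\xi}{\eta}\), or \(\zeta * \xi\) on the relevant slice then reduces to a single nonzero summand equal to the product of the two input function values, and vanishes on the complement because no valid factorisation exists there. Continuity and compact support follow directly from the homeomorphism and from the compact supports of the inputs. The main obstacle is proving \(\Qu\)-injectivity on \(V_1 W\) and \(Z V_1\), since \(\Qu\) is merely invariant or equivariant (not injective) on the individual factors; the argument must carefully exploit the compatibility of \(\Qu\) with both actions and the slice hypothesis on \(V_1\) to reduce to a situation where injectivity of \(\rg\) or \(\s\) on \(W\) or \(Z\) can be applied.
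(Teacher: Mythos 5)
Your proposal is correct and follows essentially the same strategy as the paper: openness of the three subsets via the relevant local homeomorphisms, uniqueness of factorisation and the slice property by peeling off one factor with an anchor map (\(\s\), \(\rg\), or \(\Qu\)) and the slice hypotheses, and then identifying each convolution or bracket as a single-summand product supported on the composite slice. The only (organisational) difference is that the paper writes out just the \(\braket{V_1}{V_2}\) case and defers \(V_1 W\) and \(Z V_1\) to the later Lemma~\ref{lem:multiply_slices} and Proposition~\ref{hcomp} on compositions of correspondences, whereas you argue all three cases in parallel.
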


\begin{proof}
  All three assertions are proven similarly, and the claims about
  \(V_1 W\) and \(Z V_1\) are, in fact, special cases of
  \longref{Lemma}{lem:multiply_slices} and
  \longref{Proposition}{hcomp} about the composition of
  correspondences later on.  Therefore, we only write down the proof
  for \(\braket{V_1}{V_2}\).

  The subset \(\braket{V_1}{V_2}\) is open by
  \longref{Proposition}{pro:innprod}.  Let \(x_1,y_1\in V_1\) and
  \(x_2,y_2\in V_2\) satisfy \(\Qu(x_1) = \Qu(x_2)\) and
  \(\Qu(y_1) = \Qu(y_2)\), so that \(\braket{x_1}{x_2}\) and
  \(\braket{y_1}{y_2}\) are defined.  Assume first that
  \(\s(\braket{x_1}{x_2}) = \s(\braket{y_1}{y_2})\).  Then
  \longref{Lemma}{def:innerprodofcorr} implies
  \(\s(x_2) = \s(\braket{x_1}{x_2}) = \s(\braket{y_1}{y_2}) =
  \s(y_2)\).  Then \(x_2 = y_2\) because~\(V_2\) is a slice
  of~\(\Bisp\).  Then \(\Qu(x_1) = \Qu(x_2) = \Qu(y_2) = \Qu(y_1)\).
  Then \(x_1 = y_1\) because~\(V_1\) is a slice of~\(\Bisp\).
  This proves both that~\(\s\) is injective on \(\braket{V_1}{V_2}\)
  and that the elements \(x_1\in V_1\), \(x_2\in V_2\) with
  \(g= \braket{x_1}{x_2}\) for \(g\in \braket{V_1}{V_2}\) are
  unique.  A similar argument shows that \(x_1 = y_1\) and
  \(x_2 = y_2\) hold if
  \(\rg(\braket{x_1}{x_2}) = \rg(\braket{y_1}{y_2})\).  Therefore,
  \(\braket{V_1}{V_2}\) is a slice.  It follows easily
  from~\eqref{eq:braket_XX} that this function is only nonzero in
  \(\braket{V_1}{V_2}\), that is, for \(g\in G\) for which there are
  \(x_1\in V_1\), \(x_2\in V_2\) --~necessarily unique~-- with
  \(x_1 g = x_2\) and that
  \(\braket{\xi}{\eta}(g) = \conj{\xi(x_1)} \eta(x_2)\).  Thus
  \(\braket{\xi}{\eta} \in \Contc(\braket{V_1}{V_2})\).
\end{proof}

\begin{lemma}
  \label{lem:ContS_algebraic}
  The multiplication maps above make \(\ContS(\Bisp)\) a
  \(\ContS(\Gr[H])\)-\(\ContS(\Gr)\)-bimodule, that is, they are
  bilinear and
  \(\xi* (\gamma_1*\gamma_2) = (\xi* \gamma_1) * \gamma_2\),
  \((\zeta_1 * \zeta_2) *\xi = \zeta_1 * (\zeta_2 *\xi)\),
  \((\zeta*\xi)*\gamma = \zeta *(\xi*\gamma)\) for
  \(\xi\in\ContS(\Bisp)\),
  \(\gamma,\gamma_1,\gamma_2\in \ContS(\Gr)\),
  \(\zeta,\zeta_1,\zeta_2\in \ContS(\Gr[H])\).  The inner product is
  linear in the second variable and satisfies
  \((\braket{\xi}{\eta})^* = \braket{\eta}{\xi}\),
  \(\braket{\xi}{\eta*\gamma} = \braket{\xi}{\eta} * \gamma\), and
  \(\braket{\zeta*\xi}{\eta} = \braket{\xi}{\zeta^* * \eta}\) for
  \(\xi,\eta\in\ContS(\Bisp)\), \(\gamma\in \ContS(\Gr)\),
  \(\zeta\in \ContS(\Gr[H])\).  It follows that the inner product is
  conjugate-linear in the first variable and satisfies
  \(\braket{\xi*\gamma}{\eta} = \gamma^* * \braket{\xi}{\eta}\) for
  \(\xi,\eta\in\ContS(\Bisp)\), \(\gamma\in \ContS(\Gr)\).
\end{lemma}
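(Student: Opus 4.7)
The plan is to reduce every assertion to pointwise identities on slices. By Proposition \ref{pro:covering_decomposition}, every function in $\ContS(\Bisp)$, $\ContS(\Gr)$ and $\ContS(\Gr[H])$ is a finite linear combination of $\Contc$-functions supported on slices. Bilinearity is immediate from the defining formulas \eqref{eq:convolution_XG}--\eqref{eq:convolution_GX}, so by multilinearity it suffices to verify each identity assuming each argument is supported on a single slice.

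For such arguments, Lemma \ref{lem:corr_structure} (together with the already-established analogous statement for the groupoid convolution on $\ContS(\Gr)$) ensures that every product or bracket lies in $\Contc$ of an explicit slice, with a pointwise value that is a single product of values rather than a sum over a fibre. Thus, for $\xi \in \Contc(V)$, $\gamma_1 \in \Contc(W_1)$, $\gamma_2 \in \Contc(W_2)$, both sides of $\xi * (\gamma_1 * \gamma_2) = (\xi * \gamma_1) * \gamma_2$ are $\Contc$-functions on the slice $V W_1 W_2 \subseteq \Bisp$, taking the value $\xi(v)\gamma_1(w_1)\gamma_2(w_2)$ at $v w_1 w_2$; the remaining associativity identity $(\zeta * \xi) * \gamma = \zeta * (\xi * \gamma)$ is verified in the same way, using associativity of the multiplications in $\Gr$ and $\Gr[H]$ and the fact that the two actions on $\Bisp$ commute.

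The inner-product identities use Proposition \ref{pro:innprod}. The identity $(\braket{\xi}{\eta})^* = \braket{\eta}{\xi}$ follows from property \ref{en:innprod_3}, which matches the support slices and pointwise values. For $\braket{\xi}{\eta*\gamma} = \braket{\xi}{\eta}*\gamma$, the key observation is that whenever $x_1 \in V_1$, $x_2 \in V_2$, $g \in W$ satisfy $\Qu(x_1) = \Qu(x_2)$ and $\s(x_2) = \rg(g)$, property \ref{en:innprod_1} gives $\braket{x_1}{x_2 \cdot g} = \braket{x_1}{x_2} \cdot g$; both sides then agree as $\Contc$-functions on the common slice $\braket{V_1}{V_2}W$. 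For $\braket{\zeta*\xi}{\eta} = \braket{\xi}{\zeta^**\eta}$, the key identity is property \ref{en:innprod_4} specialised to $g_1 = \s(x_1)$, $g_2 = \s(x_2)$, namely $\braket{h x_1}{h x_2} = \braket{x_1}{x_2}$, so that on the slice $ZV_1$, applying or removing the left $\Gr[H]$-factor from both entries does not affect the bracket. The remaining assertions --- conjugate-linearity of $\braket{\blank}{\blank}$ in the first variable and $\braket{\xi*\gamma}{\eta} = \gamma^* * \braket{\xi}{\eta}$ --- follow formally by combining the identities already proven with the involution.

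The computations are routine; the only mild subtlety is that $\Bisp$, $\Gr$ and $\Gr[H]$ may fail to be Hausdorff, but this is handled automatically by always working inside Hausdorff slices, where the sums in \eqref{eq:convolution_XG}--\eqref{eq:convolution_GX} and \eqref{eq:braket_XX} collapse to a single nonzero term by Lemma \ref{lem:corr_structure}.
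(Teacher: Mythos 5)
Your proposal is correct and takes essentially the same route as the paper's proof: reduce by (multi)linearity to $\Contc$-functions on slices, where the products and brackets are single pointwise terms on explicit slices, and then verify each identity via associativity of the actions and the properties of the bracket from Lemma~\ref{def:innerprodofcorr} (Proposition~\ref{pro:innprod}), deducing the last two assertions formally from the involution. The paper merely states this reduction in two sentences, so your write-up is the same argument carried out in more detail.
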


\begin{proof}
  All claims for functions in~\(\ContS\) follow if they hold for
  compactly supported continuous functions on slices.  In this
  special case, they follow from the associativity of our products
  or from the properties of the bracket operation in
  \longref{Lemma}{def:innerprodofcorr}.
\end{proof}

\begin{lemma}
  \label{lem:positive_innerprod}
  Let \(\xi\in\ContS(\Bisp)\).  Then there are finitely many
  elements \(a_i\in \ContS(\Gr)\) for \(i=1,\dotsc,n\) with
  \(\braket{\xi}{\xi} = \sum_{i=1}^n a_i * a_i^*\); roughly speaking
  \(\braket{\xi}{\xi} \ge 0\) in \(\ContS(\Bisp)\).  In addition, if
  \(\xi\neq0\), then \(\braket{\xi}{\xi}\neq0\).
\end{lemma}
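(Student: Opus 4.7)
The plan is to first treat the case where \(\xi\) is supported in the saturation \(W\Grcomp\Gr\) of a single slice \(W\subseteq \Bisp\), where a single \(a\in\ContS(\Gr)\) suffices, and then to reduce the general case to this one via a partition of unity on the Hausdorff orbit space \(\Bisp/\Gr\).

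For the special case, let \(W \subseteq \Bisp\) be a slice; then \(\Qu|_W\) is a homeomorphism onto \(U\defeq\Qu(W)\), so \(\sigma \defeq (\Qu|_W)^{-1}\colon U\to W\) is a section of \(\Qu\) over \(U\). Because the right \(\Gr\)\nb-action is free, every \(x \in W\Grcomp\Gr\) has a unique decomposition \(x = \sigma(\Qu(x))\cdot A(x)\) where \(A(x) \defeq \braket{\sigma(\Qu(x))}{x} \in \Gr\). The map \(A\colon W\Grcomp\Gr \to \rg^{-1}(\s(W))\) is a homeomorphism, with inverse \(g\mapsto(\s|_W)^{-1}(\rg(g))\cdot g\), and it satisfies the crucial identity \(A(xg) = A(x)\cdot g\) because \(\Qu(xg)=\Qu(x)\). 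Assuming \(\xi\) is supported in \(W\Grcomp\Gr\), define \(c(g) \defeq \xi(A^{-1}(g))\) on \(\rg^{-1}(\s(W))\) and \(0\) elsewhere; \(c\) lies in \(\ContS(\Gr)\) because \(A\) transports the Hausdorff open subsets appearing in a decomposition of \(\xi\) to Hausdorff open subsets of \(\Gr\) and preserves compact supports. Reindexing the convolution using the bijection \(k = A(x)\) and the identity \(A(xg)=A(x)g\) then gives
\[
  (c^* * c)(g)
  = \sum_{\s(k)=\rg(g)} \overline{c(k)}\, c(kg)
  = \sum_{\s(x)=\rg(g)} \overline{\xi(x)}\, \xi(xg)
  = \braket{\xi}{\xi}(g),
\]
so setting \(a \defeq c^*\in\ContS(\Gr)\) yields \(\braket{\xi}{\xi} = a * a^*\).

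For a general \(\xi\in\ContS(\Bisp)\), let \(K \defeq \Qu(\supp \xi) \subseteq \Bisp/\Gr\); this is compact since \(\supp \xi\) lies in the finite union of compact supports of the slice-pieces of \(\xi\). Since \(\Qu\) is a local homeomorphism by \longref{Lemma}{lem:basic_orbit_lh}, \(K\) is covered by finitely many open sets \(U_\alpha\subseteq \Bisp/\Gr\) over which \(\Qu\) has sections \(\sigma_\alpha\), with \(W_\alpha\defeq\sigma_\alpha(U_\alpha)\) a slice of \(\Bisp\). Because \(\Bisp/\Gr\) is locally compact Hausdorff by \longref{Proposition}{pro:basic_actions_free_proper}, there is a partition of unity \(\{\psi_\alpha\}\) on \(K\) subordinate to \(\{U_\alpha\}\), for which \(\sqrt{\psi_\alpha}\) is continuous. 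The functions \(\xi^\alpha \defeq (\sqrt{\psi_\alpha}\circ \Qu)\cdot \xi\) lie in \(\ContS(\Bisp)\) and are supported in \(W_\alpha\Grcomp\Gr\); using \(\Qu(x)=\Qu(xg)\) inside the inner product,
\[
  \sum_\alpha \braket{\xi^\alpha}{\xi^\alpha}(g)
  = \sum_{\s(x)=\rg(g)} \Bigl(\sum_\alpha \psi_\alpha(\Qu(x))\Bigr)\, \overline{\xi(x)}\, \xi(xg)
  = \braket{\xi}{\xi}(g),
\]
because \(\sum_\alpha \psi_\alpha = 1\) on \(K\) and the summand vanishes whenever \(\Qu(x)\notin K\). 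Applying the special case to each \(\xi^\alpha\) produces \(a_\alpha \in \ContS(\Gr)\) with \(\braket{\xi^\alpha}{\xi^\alpha} = a_\alpha * a_\alpha^*\), and hence \(\braket{\xi}{\xi} = \sum_\alpha a_\alpha * a_\alpha^*\).

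The nonvanishing claim is the easy part: if \(\xi(x_0)\ne 0\), evaluating at the unit \(\s(x_0)\) gives \(\braket{\xi}{\xi}(\s(x_0)) = \sum_{\s(x) = \s(x_0)} \abs{\xi(x)}^2 \ge \abs{\xi(x_0)}^2 > 0\), with the sum finite since the fibres of \(\s\) are discrete. The main obstacle is that \(\Qu\) typically admits no global section, which is circumvented by the partition of unity on the Hausdorff orbit space; the subtle verification that the constructed \(c\) lies in \(\ContS(\Gr)\) exploits precisely that \(\xi\) itself decomposes as a finite sum of \(\Contc\)\nb-functions on Hausdorff opens, a property preserved by the homeomorphism \(A\).
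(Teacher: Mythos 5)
Your argument is correct and follows essentially the same route as the paper: a partition of unity \(\{\psi_\alpha\}\) on the compact set \(\Qu(\supp\xi)\) in the Hausdorff orbit space, subordinate to images \(\Qu(W_\alpha)\) of slices, reducing to pieces localized over a single slice. Your local step --- trivialising \(\Qu^{-1}(U_\alpha)\) via the homeomorphism \(A\) and transporting \(\xi^\alpha\) to \(c_\alpha\in\ContS(\Gr)\) --- in fact yields exactly the paper's elements \(a_\alpha=\braket{\xi}{\varphi_\alpha}\) with \(\varphi_\alpha=\sqrt{\psi_\alpha}\circ\Qu|_{W_\alpha}\); the paper merely organises the verification differently, via the reconstruction identity \(\sum_\alpha\varphi_\alpha*\braket{\varphi_\alpha}{\xi}=\xi\).
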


\begin{proof}
  Write \(\xi = \sum_{k=1}^n \xi_k\) with \(\xi_k\in \Contc(V_k)\)
  for slices \(V_k \subseteq \Bisp\) for \(k=1,\dotsc,n\).  Let
  \(\Qu\colon \Bisp \to \Bisp/\Gr\) be the quotient map.  Recall
  that the space~\(\Bisp/\Gr\) is locally compact and Hausdorff by
  \longref{Proposition}{pro:basic_actions_free_proper}.  The subset
  \(K \defeq \bigcup_{k=1}^n \Qu(\supp \xi_k) \subseteq \Bisp/\Gr\)
  is compact, hence paracompact.  Then there are functions
  \(\varphi_i'\in \Contc(\Qu(V_i))\) for \(i=1,\dotsc,n\) with
  \(\sum_{i=1}^n \abs{\varphi_i'(y)}^2 = 1\) for all \(y\in K\).
  Since each~\(\Qu\) is a slice,
  \(\Qu|_{V_i} \colon V_i \to \Qu(V_i)\) is a homeomorphism.  Let
  \(\varphi_i \defeq \varphi'_i \circ \Qu|_{V_i}^{-1}\in
  \Contc(V_i)\).  We claim that
  \(a_i \defeq \braket{\xi}{\varphi_i}\) for \(i=1,\dotsc,n\) will
  do the job.

  To prove the claim, we first compute
  \(\varphi_i * \braket{\varphi_i}{\xi_k} \in \ContS(\Bisp)\).
  Since all functions involved are continuous with compact support
  on certain slices of~\(\Bisp\), so is this combination by the
  proof of \longref{Lemma}{lem:corr_structure}.  Its support is
  \(V_i \cdot \braket{V_i}{V_k} \subseteq \Bisp\).  Any element of
  this slice is of the form \(v_1 \cdot \braket{v_2}{v_3}\) for
  some \(v_1,v_2\in V_i\), \(v_3\in V_k\), for which this expression
  is defined; that is, \(\Qu(v_2) = \Qu(v_3)\) and
  \(\s(v_1) = \rg(\braket{v_2}{v_3}) = \s(v_2)\).  And then
  \[
    \varphi_i * \braket{\varphi_i}{\xi_k}
    (v_1 \cdot \braket{v_2}{v_3})
    = \varphi_i(v_1) \conj{\varphi_i(v_2)} \xi_k(v_3).
  \]
  Since \(v_1,v_2\in V_i\) and~\(V_i\) is a slice of~\(\Bisp\),
  \(\s(v_1) = \s(v_2)\) implies \(v_1 = v_2\).  Then
  \(v_1 \cdot \braket{v_2}{v_3} = v_3\) by
  \longref{Lemma}{def:innerprodofcorr}.  As a result,
  \(\varphi_i * \braket{\varphi_i}{\xi_k} \in \Contc(V_k)\) and
  if \(x\in V_k\), then
  \[
    \varphi_i * \braket{\varphi_i}{\xi_k} (x)
    = \abs{\varphi_i(a)}^2 \xi_k(x)
  \]
  if there is \(a\in V_i\) with \(\Qu(a) = \Qu(x)\), and~\(0\)
  otherwise.  Then
  \(\abs{\varphi_i(a)}^2 = \abs{\varphi'_i(\Qu(x))}^2\).  Since
  \(\sum \abs{\varphi'_i(\Qu(x))}^2 = 1\) for all \(x\in V_k\) with
  \(\xi_k(x)\neq0\), this implies
  \[
    \sum_{i=1}^n \varphi_i * \braket{\varphi_i}{\xi_k} = \xi_k.
  \]
  Since this holds for all~\(k\), summing over~\(k\) gives
  \(\sum_{i=1}^n \varphi_i * \braket{\varphi_i}{\xi} = \xi\).
  Therefore,
  \[
    \braket{\xi}{\xi}
    = \braket*{\xi}{\sum_{i=1}^n \varphi_i *
      \braket{\varphi_i}{\xi}}
    = \sum_{i=1}^n \braket{\xi}{\varphi_i}* \braket{\varphi_i}{\xi}
    = \sum_{i=1}^n a_i * a_i^*
  \]
  as desired.

  If \(y\in \Gr^0\), then we compute
  \[
    \braket{\xi}{\xi}(1_y)
    = \sum_{\setgiven{x\in\Bisp}{\s(x) = y}} \conj{\xi(x)}{\xi(x)}
    = \sum_{\setgiven{x\in\Bisp}{\s(x) = y}} \abs{\xi(x)}^2.
  \]
  If this vanishes for all \(y\in\Gr^0\), then \(\xi(x)=0\) for all
  \(x\in\Bisp\), and then \(\xi=0\).
\end{proof}

\longref{Lemma}{lem:positive_innerprod} implies the following:
\begin{itemize}
\item
  \(\norm{\xi} \defeq \norm{\braket{\xi}{\xi}}_{\Cst(\Gr)}^{1/2}\)
  is a norm on \(\ContS(\Bisp)\);
\item the given inner product and right \(\ContS(\Gr)\)-module
  structure on \(\ContS(\Bisp)\) extend to a Hilbert
  \(\Cst(\Gr)\)-module structure on the norm completion of
  \(\ContS(\Bisp)\).
\end{itemize}
We denote this Hilbert \(\Cst(\Gr)\)-module by \(\Cst(\Bisp)\).

\begin{lemma}
  \label{lem:left_action_bounded}
  If \(\zeta\in \ContS(\Gr[H])\), then the map
  \(\xi\mapsto \zeta*\xi\) is bounded for the norm on
  \(\ContS(\Bisp)\) defined above.
\end{lemma}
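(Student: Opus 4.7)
The plan is to reduce to the case where $\zeta$ is supported on a single slice, rewrite $\langle \zeta * \xi, \zeta * \xi \rangle$ as $\langle \xi, \phi * \xi \rangle$ with $\phi = \zeta^* * \zeta \in \Contc(\Gr[H]^0)$, and then dominate $\phi$ by the constant $\norm{\phi}_\infty$ via a Urysohn argument on the locally compact Hausdorff space $\Gr[H]^0$.

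First I would apply \longref{Proposition}{pro:covering_decomposition} to the covering of $\Gr[H]$ by slices to write $\zeta = \sum_{i=1}^{n} \zeta_i$ with $\zeta_i \in \Contc(Z_i)$ for slices $Z_i \subseteq \Gr[H]$. Since $\norm{\xi} \defeq \norm{\langle \xi, \xi \rangle}_{\Cst(\Gr)}^{1/2}$ is a norm on $\ContS(\Bisp)$, as noted right after \longref{Lemma}{lem:positive_innerprod}, the triangle inequality reduces the claim to each $\zeta_i$ in place of $\zeta$. So I may assume $\zeta \in \Contc(Z)$ for a single slice $Z \subseteq \Gr[H]$.

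A direct unit-slice calculation, parallel to the bracket case in \longref{Lemma}{lem:corr_structure}, shows that $\phi \defeq \zeta^* * \zeta$ is supported in $\Gr[H]^0$, with $\phi(u) = \abs{\zeta(k)}^2$ for the unique $k \in Z$ satisfying $\s(k) = u$, and $\phi(u) = 0$ otherwise. Hence $\phi \in \Contc(\Gr[H]^0)$ is nonnegative and $\norm{\phi}_\infty \le \norm{\zeta}_\infty^2$. By the adjoint relations in \longref{Lemma}{lem:ContS_algebraic}, $\langle \zeta * \xi, \zeta * \xi \rangle = \langle \xi, \phi * \xi \rangle$, so it suffices to prove the operator inequality
\begin{equation*}
  \langle \xi, \phi * \xi \rangle \le \norm{\phi}_\infty \, \langle \xi, \xi \rangle \quad \text{in } \Cst(\Gr).
\end{equation*}
Let $K \defeq \rg(\supp \xi) \subseteq \Gr[H]^0$; since $\xi$ is a finite sum of compactly supported functions on Hausdorff slices of $\Bisp$, $K$ is contained in a compact subset of $\Gr[H]^0$. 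As $\Gr[H]^0$ is locally compact and Hausdorff, Urysohn's lemma yields $\chi \in \Contc(\Gr[H]^0)$ with $0 \le \chi \le \norm{\phi}_\infty$ and $\chi = \norm{\phi}_\infty$ on $K \cup \supp \phi$, so $\chi \ge \phi$ pointwise. Then $\chi - \phi \in \Contc(\Gr[H]^0)$ is nonnegative, its continuous square root $\eta$ also lies in $\Contc(\Gr[H]^0)$, and the same unit-slice calculation gives $\eta^* * \eta = \chi - \phi$. Therefore
\begin{equation*}
  \langle \xi, (\chi - \phi) * \xi \rangle = \langle \eta * \xi, \eta * \xi \rangle,
\end{equation*}
which is nonnegative in $\Cst(\Gr)$ by \longref{Lemma}{lem:positive_innerprod}. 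On the other hand, $(\chi * \xi)(y) = \chi(\rg(y)) \xi(y) = \norm{\phi}_\infty \xi(y)$ for every $y \in \Bisp$, since on $\supp \xi$ one has $\rg(y) \in K$ so $\chi(\rg(y)) = \norm{\phi}_\infty$, and both sides vanish otherwise. Hence $\langle \xi, \chi * \xi \rangle = \norm{\phi}_\infty \langle \xi, \xi \rangle$, and combining gives the slice-case bound $\norm{\zeta * \xi} \le \norm{\zeta}_\infty \norm{\xi}$.

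The main obstacle, and the reason the proof is not a one-liner, is that left convolution by $\phi \in \Contc(\Gr[H]^0)$ acts as pointwise multiplication by $\phi \circ \rg$ on $\Bisp$, yet the pointwise bound $\phi \le \norm{\phi}_\infty$ does not a priori translate into an operator bound on $\Cst(\Gr)$-valued inner products. Constructing a $\chi$ that genuinely equals the constant $\norm{\phi}_\infty$ on the compact set $K$ lets me represent the difference $\chi - \phi$ as an explicit square $\eta^* * \eta$ in $\ContS(\Gr[H])$, which is precisely what converts the pointwise inequality into the required operator inequality. It is Hausdorffness of $\Gr[H]^0$, rather than of $\Bisp$ itself, that makes the Urysohn step available.
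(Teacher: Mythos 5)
Your proof is correct and takes essentially the same route as the paper: reduce to a single slice via \longref{Proposition}{pro:covering_decomposition}, pass to \(\zeta^* * \zeta \in \Contc(\Gr[H]^0)\), and turn the pointwise bound into an operator bound by completing the square and invoking the positivity of \(\braket{\eta * \xi}{\eta * \xi}\) from \longref{Lemma}{lem:positive_innerprod}. The only difference is technical: the paper completes the square with the bounded but not compactly supported function \(\tau = \sqrt{M^2 - \abs{\zeta}^2}\) on \(\Gr[H]^0\), whereas your Urysohn cutoff \(\chi\), constant on \(\rg(\supp\xi)\cup\supp\phi\), keeps the auxiliary function inside \(\Contc(\Gr[H]^0)\) --- a slightly more careful variant of the same computation.
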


\begin{proof}
  First, let \(\zeta\in \Contc(\Gr[H]^0) \subseteq \ContS(\Gr[H])\).
  Then \((\zeta*\xi)(x) = \zeta(\rg(x))\cdot \xi(x)\) for all
  \(\xi\in \ContS(\Bisp)\).  Let~\(M\) be the maximum
  of~\(\abs{\zeta}\) and let
  \(\tau(y) \defeq \sqrt{M^2-\abs{\zeta(y)}^2}\) for all
  \(y\in\Gr[H]^0\).  This defines a bounded function on~\(\Gr[H]^0\)
  with \(\zeta^* * \zeta + \tau^* * \tau = M^2\).  Therefore, we may
  estimate
  \[
    M^2\cdot \braket{\xi}{\xi}
    = \braket{\xi}{(\zeta^* * \zeta + \tau^* * \tau) * \xi}.
    = \braket{\zeta *\xi}{\zeta * \xi}
    + \braket{\tau * \xi}{\tau * \xi}
    \ge \braket{\zeta * \xi}{\zeta * \xi}
  \]
  This inequality in \(\ContS(\Gr) \subseteq \Cst(\Gr)\) implies
  \(M \cdot \norm{\xi} \ge \norm{\zeta*\xi}\).  Equivalently,
  the operator norm of \(\zeta\in \Cont(\Gr[H]^0)\) is at most
  \(\norm{\zeta}_\infty\).

  Next, if \(\zeta\in \Contc(Z)\) for a slice
  \(Z\subseteq \Gr[H]\), then
  \(\norm{\zeta*\xi}^2 = \norm{\braket{\xi}{\zeta^**\zeta *\xi}}\),
  and \(\zeta^**\zeta\in \Contc(\Gr^0)\).  Since the latter has
  operator norm at most \(\norm{\zeta}^2_\infty<\infty\), it follows
  that the operator norm of left convolution with~\(\zeta\) is at
  most~\(\norm{\zeta}_\infty\).  Finally,
  \longref{Proposition}{pro:covering_decomposition} reduces the case of
  general \(\zeta\in \ContS(\Gr[H])\) to this special case.
\end{proof}

\longref{Lemma}{lem:left_action_bounded} implies that the representation
of \(\ContS(\Gr[H])\) on \(\ContS(\Bisp)\) by left convolution
extends to a representation by bounded linear operators on
\(\Cst(\Bisp)\).  This representation inherits the algebraic
properties in \longref{Lemma}{lem:ContS_algebraic} by continuity.
Therefore, we get a \Star{}homomorphism from \(\ContS(\Gr[H])\) to
the \(\Cst\)\nb-algebra of adjointable operators on~\(\Cst(\Bisp)\).
This extends uniquely to a \Star{}homomorphism on \(\Cst(\Gr[H])\)
by the universal property of the \(\Cst\)\nb-completion.  The
computations above also show that the left
\(\Contc(\Gr[H]^0)\)-module structure on \(\ContS(\Bisp)\) is
nondegenerate.  This implies that the representation of
\(\Cst(\Gr[H])\) on \(\Cst(\Bisp)\) is nondegenerate.  This
completes the construction of the
\(\Cst(\Gr[H])\)-\(\Cst(\Gr)\)-correspondence \(\Cst(\Bisp)\) from a
groupoid correspondence \(\Bisp\colon \Gr[H] \leftarrow \Gr\).

Before we continue to build the homomorphism \(\Grcat\to\Corr\), we
examine \(\Cst(\Bisp)\) for the groupoid correspondences between
spaces, groups and transformation groups.  As expected, we get the
\(\Cst\)\nb-correspondences whose Cuntz--Pimsner algebras (in the
sense of Katsura) are the topological graph algebras, Nekrashevych's
\(\Cst\)\nb-algebra of a self-similar group, and the Exel--Pardo
\(\Cst\)\nb-algebra of a self-similar graph.

\begin{example}
  Let \(\Gr=\Gr[H]\) be a locally compact space.  In
  \longref{Example}{exa:topological_graph}, we have identified a groupoid
  correspondence \(\Bisp\colon \Gr\leftarrow\Gr\) with a topological
  graph.  When we pass to \(\Cst\)\nb-algebras, we get
  \(\Cst(\Gr) = \Cst(\Gr[H]) = \Cont_0(\Gr)\).  Since~\(\Bisp\) is
  Hausdorff, \(\ContS(\Bisp) = \Contc(\Bisp)\).  The
  \(\Contc(\Gr^0)\)-bimodule structure on \(\Contc(\Bisp)\) extends
  continuously to the \(\Cont_0(\Gr^0)\)-bimodule structure
  \((f\cdot \xi\cdot g)(x) = f(\rg(x))\cdot \xi(x) \cdot g(\s(x))\)
  for \(f,g\in\Cont_0(\Gr)\), \(\xi\in\Contc(\Bisp)\),
  \(x\in \Bisp\).  The inner product simplifies to
  \(\braket{\xi}{\eta}(y) = \sum_{\s(x) = y} \conj{\xi(x)} \eta(x)\)
  for \(\xi,\eta\in\Contc(\Bisp)\), \(y\in\Gr\).  The norm
  completion \(\Cst(\Bisp)\) of this is exactly the
  \(\Cst\)\nb-correspondence that is used by
  Katsura~\cite{Katsura:class_I} to define topological graph
  \(\Cst\)\nb-algebras.
\end{example}

\begin{example}
  We have identified a proper groupoid correspondence
  \(\Bisp\colon \Gr \leftarrow \Gr\) for a (discrete) group~\(\Gr\)
  in \longref{Example}{exa:topological_graph} with the covering
  permutational bimodule of a self-similar group, minus a certain
  faithfulness property.  The assumption that~\(\Bisp\) is a proper
  correspondence says that the set \(\Bisp/\Gr\) is finite.  Here
  \(\ContS(\Gr)\) is simply the group ring \(\C[G]\), and
  \(\ContS(\Bisp)\) is the vector space \(\C[\Bisp]\) with
  basis~\(\Bisp\).  The bimodule structure and the inner product on
  \(\C[\Bisp]\) above are the same as defined by Nekrashevych in
  \cite{Nekrashevych:Cstar_selfsimilar}*{Section~3.1}.  The
  Cuntz--Pimsner algebra of the \(\Cst\)\nb-correspondence
  \(\Cst(\Bisp)\colon \Cst(\Gr) \leftarrow \Cst(\Gr)\) is
  Nekrashevych's universal Cuntz--Pimsner
  algebra~\(\mathcal{O}_{(\Gr,\Bisp)}\) of the self-similar group as
  defined in~\cite{Nekrashevych:Cstar_selfsimilar}.
\end{example}

\begin{example}
  \label{exa:Exel-Pardo}
  Now let \(\Gr=\Gr[H]= V\rtimes \Gamma\) for a group~\(\Gamma\) and
  a discrete set~\(V\).  We have described groupoid correspondences
  \(\Bisp\colon \Gr\leftarrow \Gr\) in
  \longref{Proposition}{pro:self-similar_graph}, relating them to the
  self-similar graphs of Exel and
  Pardo~\cite{Exel-Pardo:Self-similar}.  Exel and Pardo associate a
  \(\Cst\)\nb-algebra to such a self-similar graph and identify this
  \(\Cst\)\nb-algebra with the Cuntz--Pimsner algebra of a
  \(\Cst\)\nb-correspondence on \(\Cst(V\ltimes \Gamma)\).  We claim
  that this \(\Cst\)\nb-correspondence is exactly our
  \(\Cst(\Bisp)\).  To begin with, \(\ContS(\Gr)\) is the algebraic
  crossed product algebra \(\C[V] \rtimes \Gamma\), which is spanned
  by the characteristic functions~\(\delta_{v,g}\) for \(v\in V\)
  and \(g\in\Gamma\).  Similarly, \(\ContS(\Gr)\) is the vector
  space \(\C[E\times \Gamma]\) with basis \(E\times\Gamma\).
  The bimodule structure and inner product above may easily be
  expressed in terms of these bases, and the map
  \(\delta_{e,g}\mapsto t_ev_g\) gives an isomorphism from
  \(\Cst(\Bisp)\) to the \(\Cst\)\nb-correspondence that is
  denoted~\(M\) in~\cite{Exel-Pardo:Self-similar}*{Section~10}.
\end{example}

After these examples, we resume the construction of a homomorphism
\(\Grcat\to\Corr\) and turn to the action of \(2\)\nb-arrows.  Let
\(\Gr[H]\) and~\(\Gr\) be groupoids and let
\(\Bisp,\Bisp[Y]\colon \Gr[H] \leftarrow \Gr\) be groupoid
correspondences.  A \(2\)\nb-arrow
\(\alpha\colon \Bisp\Rightarrow\Bisp[Y]\) is, by definition, an
injective, \(\Gr[H]\)-\(\Gr\)-equivariant, continuous map
\(\alpha\colon \Bisp \to \Bisp[Y]\).  By
\longref{Lemma}{lem:2-arrow_local_homeo}, it follows that~\(\alpha\)
is a homeomorphism from~\(\Bisp\) onto an open subset
of~\(\Bisp[Y]\).  Then any slice of~\(\Bisp\) is also a
slice of~\(\Bisp[Y]\), and so extension by zero defines an
injective map \(\ContS(\Bisp) \subseteq \ContS(\Bisp[Y])\).  This
map preserves both the bimodule structure and the inner product.
Therefore, it is an isometry for the Hilbert module norms.  Then it
extends uniquely to an isometric map on the completions.  This
extension remains a \(\Cst(\Gr[H])\)-\(\Cst(\Gr)\)-bimodule map,
which we denote by
\(\Cst(\alpha)\colon \Cst(\Bisp) \Rightarrow \Cst(\Bisp[Y])\).

It is easy to see that \(\alpha\mapsto \Cst(\alpha)\) is functorial,
that is, the identity map \(\Bisp \congto \Bisp\) goes to the
identity map \(\Cst(\Bisp) \congto \Cst(\Bisp)\), and
\(\Cst(\alpha)\circ \Cst(\beta) = \Cst(\alpha\circ\beta)\) for
composable \(2\)\nb-arrows.

The unit correspondence~\(1_{\Gr}\) on a groupoid~\(\Gr\) is~\(\Gr\)
with the actions of~\(\Gr\) by left and right multiplication.
Inspection shows that the resulting \(\ContS(\Gr)\)-bimodule
structure and inner product on \(\ContS(\Gr)\) are given by left and
right convolution and the usual inner product formula
\(\braket{\xi}{\eta} \defeq \xi^* * \eta\).  Therefore, the
completion \(\Cst(1_{\Gr})\) is equal to \(\Cst(\Gr)\) with the
usual Hilbert bimodule structure over itself.  Thus the construction
\(\Bisp\mapsto \Cst(\Bisp)\) maps the unit groupoid correspondence
to the unit \(\Cst\)\nb-correspondence.  We briefly say that our
construction is \emph{strictly unital}.

Next, we turn to the compatibility with composition of groupoid
correspondences.  We first prove a preparatory lemma about the
composition of slices, which is analogous to
\longref{Lemma}{lem:corr_structure}.

\begin{lemma}
  \label{lem:multiply_slices}
  Let $\Bisp\colon \Gr[K] \leftarrow \Gr[H]$ and
  $\Bisp[Y]\colon \Gr[H]\leftarrow \Gr$ be composable groupoid
  correspondences.  Recall that $\Bisp\Grcomp_{\Gr[H]}\Bisp[Y]$ is
  is the orbit space of the diagonal \(\Gr[H]\)\nb-action on
  \(\Bisp\times_{\s,\Gr[H]^0,\rg}\Bisp[Y]\), with a canonical structure
  of groupoid correspondence $\Gr[K]\leftarrow\Gr$.  Denote elements
  of $\Bisp\Grcomp_{\Gr[H]}\Bisp[Y]$ as \([x,y]\) for \(x\in\Bisp\),
  \(y\in\Bisp[Y]\) with \(\s(x) = \rg(y)\).
  Let \(U\subseteq \Bisp\) and
  \(V\subseteq \Bisp[Y]\) be slices.  Define
  \[
    U\cdot V \defeq \setgiven{[x,y]}{x\in U,\ y\in V,\ \s(x) = \rg(y)}
    \subseteq \Bisp \Grcomp_{\Gr} \Bisp[Y].
  \]
  This subset is a slice.  For each point in \(z\in U\cdot V\),
  the elements \(x\in U\) and \(y\in V\) with \(z=[x,y]\) are
  unique.
\end{lemma}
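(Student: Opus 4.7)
The plan is to reduce each assertion of the lemma — openness of $U\cdot V$, uniqueness of the representative $[x,y]\in U\cdot V$, and injectivity of the right anchor $\s$ and the orbit projection $\Qu$ on $U\cdot V$ — to the slice hypotheses on $U$ and $V$, using freeness of the right $\Gr[H]$-action on $\Bisp$ together with the structural results already established.

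First I would show that $U\cdot V$ is open. The intersection $(U\times V)\cap(\Bisp\times_{\s,\Gr[H]^0,\rg}\Bisp[Y])$ is open in the fibred product, and $U\cdot V$ is exactly its image under the quotient map $\Bisp\times_{\s,\Gr[H]^0,\rg}\Bisp[Y]\twoheadrightarrow \Bisp\Grcomp_{\Gr[H]}\Bisp[Y]$. By \longref{Proposition}{pro:groupoid_correspondences_composition} the diagonal $\Gr[H]$-action on the fibred product is basic, so by \longref{Lemma}{lem:basic_orbit_lh} this quotient map is an open surjective local homeomorphism, and openness of $U\cdot V$ follows.

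Next I would establish uniqueness. If $[x_1,y_1]=[x_2,y_2]$ with $x_i\in U$ and $y_i\in V$, then by definition of the diagonal orbit there is $h\in\Gr[H]$ with $x_2=x_1\cdot h^{-1}$ and $y_2=h\cdot y_1$. In particular $\Qu(x_1)=\Qu(x_2)$ in $\Bisp/\Gr[H]$; since $\Qu|_U$ is injective this forces $x_1=x_2$, and then freeness of the right $\Gr[H]$-action on $\Bisp$ forces $h$ to be the unit $\s(x_1)$, whence $y_2=y_1$.

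With uniqueness in hand, both injectivity statements follow quickly. If $\s[x_1,y_1]=\s[x_2,y_2]$ then $\s(y_1)=\s(y_2)$, so the slice property of $V$ yields $y_1=y_2$, after which $\s(x_1)=\rg(y_1)=\rg(y_2)=\s(x_2)$ combined with the slice property of $U$ yields $x_1=x_2$. For the orbit projection, suppose $\Qu[x_1,y_1]=\Qu[x_2,y_2]$ in $(\Bisp\Grcomp_{\Gr[H]}\Bisp[Y])/\Gr$; then $[x_2,y_2]=[x_1,y_1\cdot g]$ for some $g\in\Gr$, so there exists $h\in\Gr[H]$ with $x_2=x_1\cdot h^{-1}$ and $y_2=h\cdot(y_1\cdot g)$. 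The same argument as before (using $x_1,x_2\in U$ and $\Qu|_U$ injective, plus freeness) gives $x_1=x_2$ and $y_2=y_1\cdot g$, so $\Qu(y_1)=\Qu(y_2)$ in $\Bisp[Y]/\Gr$, and the slice property of $V$ finishes the job. No step presents a genuine obstacle; the only care required is to track which slice property ($\s$- or $\Qu$-injectivity) is invoked on which of $U$ and $V$, while the structural input — basicness of the diagonal $\Gr[H]$-action and the consequent openness of the orbit map — is already supplied by the results cited above.
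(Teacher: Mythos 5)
Your proposal is correct and follows essentially the same route as the paper: openness of \(U\cdot V\) via the open orbit projection from \longref{Lemma}{lem:basic_orbit_lh}, and injectivity of \(\s\) and \(\Qu\) on \(U\cdot V\) by the same case analysis, using \(\s\)-injectivity of \(V\) then \(U\) in the first case, and \(\Qu\)-injectivity of \(U\), freeness of the right \(\Gr[H]\)-action, and \(\Qu\)-injectivity of \(V\) in the second. The only cosmetic difference is that you prove uniqueness of the representative \([x,y]\) directly from the orbit relation and freeness, whereas the paper extracts it as a byproduct of the \(\s\)-injectivity argument; both are fine.
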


\begin{proof}
  The subset \(U\times_{\Gr^0} V\) is open in
  \(\Bisp \times_{\s,\Gr^0,\rg} \Bisp[H]\) by definition.  Then its
  image \(U\cdot V\) in~\(\Bisp \Grcomp_{\Gr} \Bisp[Y]\) is open by
  \longref{Lemma}{lem:basic_orbit_lh}.  Let \(x_1,x_2\in U\) and
  \(y_1,y_2\in V\) be such that \(\s(x_1) = \rg(y_1)\) and
  \(\s(x_2) = \rg(y_2)\).  Assume first that
  \(\s[x_1,y_1] = \s[x_2,y_2]\).  This means that
  \(\s(y_1) = \s(y_2)\).  Then \(y_1=y_2\) because~\(V\) is a
  slice.  Then \(\s(x_1) = \rg(y_1) = \rg(y_2) = \s(x_2)\).
  This implies \(x_1 = x_2\) because~\(U\) is a slice.  This
  proves that the elements \(x\in U\) and \(y\in V\) with
  \(z=[x,y]\) are unique and that~\(\s|_{U\cdot V}\) is injective.
  Next, we assume instead that \(\Qu[x_1,y_1] = \Qu[x_2,y_2]\).
  This means that there is \(g\in \Gr\) with
  \(\s[x_1,y_1] = \rg(g)\) and \([x_1,y_1]\cdot g = [x_2,y_2]\).
  Equivalently, \(\s(y_1) = \rg(g)\) and
  \([x_1,y_1\cdot g] = [x_2,y_2]\).  This means that there is
  \(h\in\Gr[H]\) with \(\rg(h) = \s(x_1) = \rg(y_1)\) such that
  \((x_1\cdot h,h^{-1}\cdot y_1\cdot g) = (x_2,y_2)\).  Then
  \(x_1\cdot h = x_2\), so that \(\Qu(x_1) = \Qu(x_2)\) in
  \(\Bisp/\Gr[H]\).  This implies \(x_1 = x_2\) because~\(U\) is a
  slice.  Since the right \(\Gr[H]\)\nb-action on~\(\Bisp\) is
  free, it follows that \(h=1_{\s(x_1)}\).  Then
  \(y_1\cdot g = y_2\), and an analogous argument for the
  slice~\(V\) in the groupoid correspondence~\(\Bisp[Y]\) shows
  that \(y_1 = y_2\).  Therefore, the orbit space projection is
  injective on \(U\cdot V\).  This finishes the proof that
  \(U\cdot V\) is a slice in $\Bisp\Grcomp_{\Gr[H]}\Bisp[Y]$.
\end{proof}

\begin{proposition}
  \label{hcomp}
  Let $\Bisp\colon \Gr[K] \leftarrow \Gr[H]$ and
  $\Bisp[Y]\colon \Gr[H]\leftarrow \Gr$ be composable groupoid
  correspondences.  There is a well defined map
  \begin{align*}
    \mu^0_{\Bisp,\Bisp[Y]}\colon \ContS(\Bisp) \otimes
    \ContS(\Bisp[Y])
    &\to \ContS(\Bisp \Grcomp_{\Gr[H]} \Bisp[Y]),
    \\
    \mu^0_{\Bisp,\Bisp[Y]}(f_1\otimes f_2)([x,y])
    &=
    \sum_{\setgiven{h\in\Gr[H]^0}{\rg(h) = \s(x)}} f_1(x h)\cdot f_2(h^{-1} y).
  \end{align*}
  It extends to an isomorphism of
  $\Cst(\Gr[K])$-\(\Cst(\Gr)\)-correspondences
  \[
    \mu_{\Bisp,\Bisp[Y]}\colon
    \Cst(\Bisp) \otimes_{\Cst(\Gr[H])} \Cst(\Bisp[Y])
    \to \Cst(\Bisp\Grcomp_{\Gr[H]}\Bisp[Y]).
  \]
  This isomorphism is natural with respect to the \(2\)\nb-arrows
  in~\(\Grcat\).
\end{proposition}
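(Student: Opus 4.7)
The plan is to verify that \(\mu^0_{\Bisp,\Bisp[Y]}\) is well defined and has the required properties first on the dense subspace of \(\ContS(\Bisp)\otimes\ContS(\Bisp[Y])\) spanned by compactly supported functions on slices, then extend by continuity. By \longref{Proposition}{pro:covering_decomposition} it suffices to treat \(f_1\in\Contc(U)\) and \(f_2\in\Contc(V)\) for slices \(U\subseteq\Bisp\) and \(V\subseteq\Bisp[Y]\). For such \(f_1,f_2\), the sum in the definition of \(\mu^0_{\Bisp,\Bisp[Y]}(f_1\otimes f_2)([x,y])\) has at most one nonzero term: if \(h_1,h_2\in\Gr[H]\) both satisfy \(xh_i\in U\) and \(h_i^{-1}y\in V\), then \(\Qu(xh_1)=\Qu(x)=\Qu(xh_2)\) in \(\Bisp/\Gr[H]\) forces \(xh_1=xh_2\) by injectivity of \(\Qu|_U\), and freeness of the right \(\Gr[H]\)\nb-action then yields \(h_1=h_2\). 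Thus \(\mu^0_{\Bisp,\Bisp[Y]}(f_1\otimes f_2)\) takes the value \(f_1(x')f_2(y')\) at each \([x',y']\) with \((x',y')\in U\times_{\s,\Gr[H]^0,\rg}V\) and zero elsewhere. Since \(U\times_{\s,\Gr[H]^0,\rg}V\to U\cdot V\) is a homeomorphism by \longref{Lemma}{lem:multiply_slices}, this defines a function in \(\Contc(U\cdot V)\subseteq\ContS(\Bisp\Grcomp_{\Gr[H]}\Bisp[Y])\) that is manifestly independent of the representative of \([x,y]\).

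Next I would verify the algebraic identities of \(\mu^0_{\Bisp,\Bisp[Y]}\) on the above generators. The balancing \(\mu^0_{\Bisp,\Bisp[Y]}(f_1 * \zeta\otimes f_2)=\mu^0_{\Bisp,\Bisp[Y]}(f_1\otimes \zeta * f_2)\) for \(\zeta\in\ContS(\Gr[H])\), left \(\ContS(\Gr[K])\)-linearity, right \(\ContS(\Gr)\)-linearity, and the inner product identity \(\braket{\mu^0_{\Bisp,\Bisp[Y]}(f_1\otimes f_2)}{\mu^0_{\Bisp,\Bisp[Y]}(f_3\otimes f_4)}=\braket{f_2}{\braket{f_1}{f_3} * f_4}\) all reduce to routine computations with convolutions that collapse to single terms once everything is supported on slices; preservation of the inner product in particular invokes the equivariance formula of \longref{Proposition}{pro:innprod}. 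Consequently, \(\mu^0_{\Bisp,\Bisp[Y]}\) factors through the algebraic balanced tensor product over \(\ContS(\Gr[H])\) as an inner-product-preserving \(\ContS(\Gr[K])\)-\(\ContS(\Gr)\)-bimodule map, and extends by continuity to an isometric bimodule map \(\mu_{\Bisp,\Bisp[Y]}\colon \Cst(\Bisp)\otimes_{\Cst(\Gr[H])}\Cst(\Bisp[Y])\to \Cst(\Bisp\Grcomp_{\Gr[H]}\Bisp[Y])\).

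Surjectivity of \(\mu_{\Bisp,\Bisp[Y]}\) follows because its image already contains \(\Contc(U\cdot V)\) for every pair of slices, and such slices cover \(\Bisp\Grcomp_{\Gr[H]}\Bisp[Y]\) by \longref{Lemma}{lem:multiply_slices} and \longref{Lemma}{lem:basic_orbit_lh}, so \longref{Proposition}{pro:covering_decomposition} makes the image dense, and density combined with the isometry property yields surjectivity. For naturality, given \(2\)\nb-arrows \(\alpha\colon \Bisp\Rightarrow\Bisp'\) and \(\beta\colon\Bisp[Y]\Rightarrow\Bisp[Y]'\), \longref{Lemma}{lem:2-arrow_local_homeo} shows they are open embeddings; on elementary tensors supported on slices, both \(\Cst(\alpha\Grcomp_{\Gr[H]}\beta)\circ\mu_{\Bisp,\Bisp[Y]}\) and \(\mu_{\Bisp',\Bisp[Y]'}\circ(\Cst(\alpha)\otimes\Cst(\beta))\) simply extend the slice-level formula by zero along the open image of \(\alpha\Grcomp_{\Gr[H]}\beta\), so they agree.

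The main obstacle is the first step, in the possibly non-Hausdorff setting: one must realise the formal convolution inside \(\ContS(\Bisp\Grcomp_{\Gr[H]}\Bisp[Y])\) rather than settle for a merely pointwise definition on a quotient where continuity is delicate. The slice calculus, built on the uniqueness of \(h\) that follows from injectivity of \(\Qu|_U\) together with freeness of the right \(\Gr[H]\)\nb-action, is what forces the sum to collapse to a single term supported on the slice \(U\cdot V\) of \longref{Lemma}{lem:multiply_slices}. Once this local description is in place, the bimodule, balancing, inner product, extension, surjectivity, and naturality verifications all follow the same pattern used in the Hausdorff case and are essentially routine.
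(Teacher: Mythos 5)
Your proposal is correct and follows essentially the same route as the paper: reduce to elementary tensors of functions supported on slices, use the slice \(U\cdot V\) from the multiplication-of-slices lemma to see that the sum collapses to a single term and lands in \(\Contc(U\cdot V)\), check the bimodule and inner-product identities on these generators, extend by continuity to an isometry, and get surjectivity because the \(\Contc(U\cdot V)\) span \(\ContS(\Bisp\Grcomp_{\Gr[H]}\Bisp[Y])\) by the covering decomposition; naturality is checked on the same generators. You also correctly read the sum as running over all \(h\in\Gr[H]\) with \(\rg(h)=\s(x)\) (the displayed restriction to \(\Gr[H]^0\) would not be well defined on the orbit space), which is exactly how the paper's proof treats it.
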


\begin{proof}
  First examine~\(\mu^0_{\Bisp,\Bisp[Y]}\) on \(f_1\otimes f_2\)
  with \(f_1\in\Contc(U)\) and \(f_2\in\Contc(V)\) for slices
  \(U\subseteq \Bisp\) and \(V\subseteq \Bisp[Y]\).  Then
  \longref{Lemma}{lem:multiply_slices} implies that \(U\cdot V\)
  is a slice and that
  \(\mu^0_{\Bisp,\Bisp[Y]}(f_1 \otimes f_2) \in \Contc(U\cdot V)\),
  with the function given by \([x,y] \mapsto f_1(x) f_2(y)\) for
  \(x\in U\), \(y\in V\).  It follows that
  \(\mu^0_{\Bisp,\Bisp[Y]}\) maps
  \(\ContS(\Bisp) \otimes \ContS(\Bisp[Y])\) surjectively onto
  \(\ContS(\Bisp \Grcomp_{\Gr[H]} \Bisp[Y])\).

  Now give \(\ContS(\Bisp) \otimes \ContS(\Bisp[Y])\) the usual
  \(\ContS(\Gr[K])\)-\(\ContS(\Gr)\)-bimodule structure and the
  usual inner product, defined by
  \(\braket{f_1 \otimes f_2}{f_3 \otimes f_4} =
  \braket{f_3}{\braket{f_2}{f_3}*f_4}\).
  \longref{Lemma}{lem:corr_structure} implies easily that the
  map~\(\mu^0_{\Bisp,\Bisp[Y]}\) is an
  \(\ContS(\Gr[K])\)-\(\ContS(\Gr)\)-bimodule map that is formally
  isometric in the sense that
  \(\braket{\mu^0_{\Bisp,\Bisp[Y]}(F_1)}{\mu^0_{\Bisp,\Bisp[Y]}(F_2)}
  = \braket{F_1}{F_2}\) for all
  \(F_1,F_2\in \ContS(\Bisp) \otimes \ContS(\Bisp[Y])\).
  Then~\(\mu^0_{\Bisp,\Bisp[Y]}\) extends uniquely to the desired
  isometry~\(\mu_{\Bisp,\Bisp[Y]}\) on the norm completions.  The
  extension also remains an
  \(\ContS(\Gr[K])\)-\(\ContS(\Gr)\)-bimodule map.  The bimodule
  property extends by continuity to \(\Cst(\Gr[K])\) and
  \(\Cst(\Gr)\).  Since \(\Contc(U\cdot V)\) is in the range
  of~\(\mu^0_{\Bisp,\Bisp[Y]}\) for all slices \(U\) and~\(V\),
  the map~\(\mu_{\Bisp,\Bisp[Y]}\) is unitary.  It is easy to check
  that~\(\mu_{\Bisp,\Bisp[Y]}\) is natural for the \(2\)\nb-arrows
  in \(\Grcat\).
\end{proof}

\begin{theorem}
  \label{the:Grcat_to_Corr}
  The maps \(\Gr\mapsto \Cst(\Gr)\) on objects,
  \(\Bisp\mapsto \Cst(\Bisp)\) on arrows, and
  \(\alpha\mapsto \Cst(\alpha)\) on \(2\)\nb-arrows together with
  the identity maps \(\Cst(1_{\Gr}) = 1_{\Cst(\Gr)}\) and the maps
  \(\mu_{\Bisp,\Bisp[Y]}\colon \Cst(\Bisp) \otimes_{\Cst(\Gr[H])}
  \Cst(\Bisp[Y]) \to \Cst(\Bisp\Grcomp_{\Gr[H]}\Bisp[Y])\) form a
  strictly unital homomorphism of bicategories \(\Grcat\to \Corr\).
\end{theorem}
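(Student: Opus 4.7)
The plan is to verify each axiom required of a strictly unital homomorphism of bicategories. Most of the work is already in place: the object, arrow, and $2$-arrow assignments are defined; the $2$-arrow assignment $\alpha \mapsto \Cst(\alpha)$ is functorial in each hom-category because extension by zero respects identities and composition of open embeddings; the unit coherence isomorphism $\Cst(1_{\Gr}) = 1_{\Cst(\Gr)}$ is an equality (strict unitality); and the naturality of $\mu_{\Bisp,\Bisp[Y]}$ with respect to $2$-arrows in both slots, together with its unitarity, are asserted in \longref{Proposition}{hcomp} and follow because under the identification from \longref{Lemma}{lem:multiply_slices} the formula defining $\mu^0_{\Bisp,\Bisp[Y]}$ commutes with extension-by-zero along any $\Gr[K],\Gr$-equivariant open embedding. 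Thus the only remaining axioms are the associativity coherence and the two unit triangles.

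For the associativity coherence, given composable correspondences $\Bisp_i\colon \Gr_i \leftarrow \Gr_{i+1}$ for $i=1,2,3$, I would check that the two paths around the hexagon
\[
  \begin{tikzcd}[column sep=1.1em]
    \Cst(\Bisp_1) \otimes \Cst(\Bisp_2) \otimes \Cst(\Bisp_3)
    \ar[r, "\mu\otimes \id"] \ar[d, "\id\otimes \mu"']
    & \Cst(\Bisp_1\Grcomp\Bisp_2)\otimes \Cst(\Bisp_3) \ar[r, "\mu"]
    & \Cst((\Bisp_1\Grcomp \Bisp_2)\Grcomp\Bisp_3) \\
    \Cst(\Bisp_1)\otimes \Cst(\Bisp_2\Grcomp \Bisp_3)
    \ar[r, "\mu"']
    & \Cst(\Bisp_1\Grcomp(\Bisp_2\Grcomp\Bisp_3))
    \ar[ur, "\Cst(\mathrm{assoc})"']
  \end{tikzcd}
\]
agree. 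Both paths are continuous bimodule maps, so it suffices to check equality on the dense subspace $\ContS(\Bisp_1)\otimes \ContS(\Bisp_2)\otimes \ContS(\Bisp_3)$, and by \longref{Proposition}{pro:covering_decomposition} on elementary tensors $f_1\otimes f_2\otimes f_3$ with each $f_i \in \Contc(V_i)$ for a slice $V_i\subseteq \Bisp_i$. By iterated application of \longref{Lemma}{lem:multiply_slices}, both images are supported on the single slice obtained by iterated composition, and at the representative $[x_1,[x_2,x_3]]$, respectively $[[x_1,x_2],x_3]$, the value equals $f_1(x_1)f_2(x_2)f_3(x_3)$ along both paths. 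Since \longref{Lemma}{lem:associator} identifies the two slices via the associator $[x_1,[x_2,x_3]]\mapsto [[x_1,x_2],x_3]$, the two compositions agree.

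The unit triangles are similarly checked on elementary tensors. For the right unit, an elementary tensor $f\otimes g$ with $f\in \Contc(V)$, $V\subseteq \Bisp$ a slice, and $g\in \Contc(W)$, $W\subseteq \Gr$ a slice, is sent by $\mu$ into $\Contc(V\cdot W)$ with value $f(x)g(h)$ at $[x,h]$; composing with $\Cst$ applied to the unitor $[x,h]\mapsto x\cdot h$ of \longref{Lemma}{lem:unitor} gives the function $(xh)\mapsto f(x)g(h)$, which coincides with the convolution $f*g$ defining the right action of $\Cst(\Gr)$ on $\Cst(\Bisp)$; the left unit triangle is analogous. I expect no serious obstacle: every verification reduces, via \longref{Proposition}{pro:covering_decomposition} and \longref{Lemma}{lem:multiply_slices}, to evaluating functions on representatives in slices, where all formulas become pointwise products. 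The only real bookkeeping lies in matching the explicit associator on composite orbits with the standard associator of the $\Cst$-tensor product, and this is precisely what \longref{Lemma}{lem:associator} and \longref{Lemma}{lem:multiply_slices} are designed to handle.
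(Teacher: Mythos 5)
Your proposal is correct and follows essentially the same route as the paper: the paper's proof simply notes that all constructions are already in place and that the remaining coherence diagrams (associativity and unit triangles) are checked routinely on elementary tensors whose factors are supported on slices, which is exactly the verification you carry out via \longref{Lemma}{lem:multiply_slices}, \longref{Lemma}{lem:associator}, and \longref{Lemma}{lem:unitor}. Your write-up merely makes explicit the pointwise computations the paper leaves as routine.
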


\begin{proof}
  The only reason to call this a theorem is that it summarises all
  the constructions in this section.  It remains to prove that the
  identity maps \(\Cst(1_{\Gr}) = 1_{\Cst(\Gr)}\) and the
  maps~\(\mu_{\Bisp,\Bisp[Y]}\) in \longref{Proposition}{hcomp} make the
  three diagrams commute that are required for homomorphisms of
  bicategories (see \cites{Benabou:Bicategories,
    Leinster:Basic_Bicategories}).  And this
  is routine to check, by testing equality of \(2\)\nb-arrows on
  elementary tensors where each factor is supported on a slice.
\end{proof}

Let~\(P\) be a monoid with unit element \(1\in P\).  We view~\(P\)
as a bicategory with only one object, set of arrows~\(P\), and only
identity \(2\)\nb-arrows.  A bicategory homomorphism from~\(P\)
to~\(\Grcat\) as defined in \cites{Benabou:Bicategories,
  Leinster:Basic_Bicategories} is described by the following data:
\begin{itemize}
  \item a groupoid~\(\Gr\);
  \item groupoid correspondences
    \(\Bisp_p\colon \Gr \leftarrow \Gr\) for
    \(p \in P\backslash\{1\}\);
  \item biequivariant homeomorphisms
    \(\sigma_{p,q}\colon \Bisp_p\Grcomp_{\Gr}\Bisp_q\to \Bisp_{pq}\)
    for \(p,q \in P\backslash\{1\}\);
\end{itemize}
we also let \(\Bisp_1 \defeq 1_{\Gr}\), and we let
\(\sigma_{1,q}\colon 1_{\Gr} \Grcomp_{\Gr} \Bisp_q \to \Bisp_q\) and
\(\sigma_{p,1}\colon \Bisp_p \Grcomp_{\Gr} 1_{\Gr} \to \Bisp_p\) be
the canonical maps in \longref{Lemma}{lem:unitor}.  The above data
describes a bicategory homomorphism if and only if the following
diagram commutes for all \(p,q,t\in P\):
\begin{equation}
  \label{eq:homomorphism_to_Grcat}
  \begin{tikzcd}[column sep=huge]
    \Bisp_p \Grcomp_{\Gr}\Bisp_q \Grcomp_{\Gr} \Bisp_t
    \arrow{r}{\id_{\Bisp_p}\Grcomp_{\Gr}\sigma_{q,t}}
    \arrow{d}[swap]{\sigma_{p,q}\Grcomp_{\Gr}\id_{\Bisp_t}}&
    \Bisp_p\Grcomp_{\Gr} \Bisp_{qt}
    \arrow{d}{\sigma_{p,qt}} \\
    \Bisp_{pq}\Grcomp_{\Gr} \Bisp_t
    \arrow{r}[swap]{\sigma_{pq,t}}  &
    \Bisp_{pqt}.
  \end{tikzcd}
\end{equation}
More generally, we may replace a monoid by a category.  Then
homomorphisms are defined in the same way.  The only change is that
we now have one groupoid~\(\Gr_x\) for each object \(x\in\Cat^0\),
and the groupoid correspondence~\(\Bisp_p\) for \(p\colon x\to y\)
becomes a correspondence  \(\Gr_y\leftarrow\Gr_x\).

Two homomorphisms of bicategories
\(\Cat \to \Cat[D] \to \Cat[E]\) may be composed in a
canonical way to a homomorphism \(\Cat \to \Cat[E]\)
(see~\cite{Benabou:Bicategories}).  Therefore, we may compose the
homomorphism described by the data above with the homomorphism in
\longref{Theorem}{the:Grcat_to_Corr} to get a homomorphism
\(P\to\Corr\).  Such homomorphisms are identified
in~\cite{Albandik-Meyer:Colimits} with product systems over the
monoid~\(P\).  The product system resulting from this composition
has the following data:
\begin{itemize}
\item the fibres are the \(\Cst\)\nb-algebra \(\Cst(\Gr)\) at
  \(1\in P\) and the underlying Banach spaces of the correspondences
  \(\Cst(\Bisp_p)\colon \Cst(\Gr) \leftarrow \Cst(\Gr)\) for
  \(p \in P\backslash\{1\}\);
\item the multiplication with \(\Cst(\Gr)\) and the
  inner product maps on the fibres of the product system come from
  the correspondence structure on \(\Cst(\Bisp_p)\);
\item the multiplication map
  \(\Cst(\Bisp_p)\otimes_{\Cst(\Gr)} \Cst(\Bisp_q) \to \Cst(\Bisp_{p
    q})\) for \(p,q\in P\setminus\{1\}\) comes from the isomorphism
  of correspondences
    \[
      \Cst(\Bisp_p)\otimes_{\Cst(\Gr)} \Cst(\Bisp_q)
      \xrightarrow{\mu_{\Bisp_p,\Bisp_q}}
      \Cst(\Bisp_p \Grcomp_{\Gr} \Bisp_q)
      \xrightarrow{\sigma_{p,q}} \Cst(\Bisp_{pq}).
    \]
\end{itemize}
This multiplication is associative because of the commuting
diagrams~\eqref{eq:homomorphism_to_Grcat} and the general theory of
bicategories.  In particular, it uses the commuting diagrams in the
definition of a bicategory homomorphism (see
\cites{Benabou:Bicategories, Leinster:Basic_Bicategories}), which involve the
maps~\(\mu_{p,q}\) in \longref{Proposition}{hcomp}.  Thus the
construction of a product system from a homomorphism \(P\to\Grcat\)
is an application of \longref{Theorem}{the:Grcat_to_Corr}.

Once we have got a product system, we may take its Cuntz--Pimsner
algebra to associate a \(\Cst\)\nb-algebra to a homomorphism
\(P\to \Grcat\).  The further study of this Cuntz--Pimsner algebra
becomes much easier if the composite homomorphism \(P\to\Corr\)
lands in the subbicategory of proper correspondences, that is,
correspondences where the left action is by compact operators.  The
following theorem characterises when the \(\Cst\)\nb-correspondence
\(\Cst(\Bisp)\) associated to a groupoid correspondence is proper:

\begin{theorem}
  \label{the:Cstar_X_proper}
  Let \(\Gr\) and~\(\Gr[H]\) be groupoids.  A groupoid
  correspondence \(\Bisp\colon \Gr[H]\leftarrow\Gr\) is proper if
  and only if the associated \(\Cst\)\nb-correspondence
  \(\Cst(\Bisp)\) is proper.
\end{theorem}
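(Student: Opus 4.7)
The plan is to analyze the left action of $\Contc(\Gr[H]^0) \subseteq \Cst(\Gr[H])$ on $\Cst(\Bisp)$, which by~\eqref{eq:convolution_GX} reduces to $(\phi * \xi)(x) = \phi(\rg(x)) \xi(x) = \phi(\rg_*(\Qu(x))) \xi(x)$. Since $\Contc(\Gr[H]^0)$ contains an approximate unit for $\Cst(\Gr[H])$, both implications reduce to deciding when the operator $L_\phi$ of left convolution with $\phi \in \Contc(\Gr[H]^0)$ is a compact operator on $\Cst(\Bisp)$, that is, lies in the closed span of rank-one operators $\theta_{\alpha,\beta}\colon \xi \mapsto \alpha \cdot \braket{\beta}{\xi}$.

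For the forward direction, assume $\Bisp$ is proper and fix $\phi \in \Contc(\Gr[H]^0)$. Properness makes $\rg_*^{-1}(\supp\phi) \subseteq \Bisp/\Gr$ compact, so I would cover it by finitely many sets $\Qu(U_i)$ with $U_i$ slices of $\Bisp$, choose a nonnegative partition of unity $\psi_i \in \Contc(\Qu(U_i))$ with $\sum_i \psi_i = 1$ on $\rg_*^{-1}(\supp\phi)$, and set $\varphi_i \defeq \sqrt{\psi_i} \circ \Qu|_{U_i} \in \Contc(U_i)$ and $\eta_i \defeq (\phi \circ \rg) \cdot \varphi_i \in \Contc(U_i)$. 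The same rank-one computation appearing in the proof of \longref{Lemma}{lem:positive_innerprod} should give $\theta_{\eta_i,\varphi_i}(\xi)(y) = \phi(\rg(y)) \psi_i(\Qu(y)) \xi(y)$; summing over~$i$ and applying $\sum_i \psi_i = 1$ wherever $\phi(\rg(y)) \neq 0$ yields $L_\phi = \sum_i \theta_{\eta_i,\varphi_i}$, a finite sum of rank-one operators, so $L_\phi \in \Comp(\Cst(\Bisp))$.

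For the converse I would argue by contradiction: assume $\Cst(\Bisp)$ proper but $\rg_*$ not proper, pick a compact $K \subseteq \Gr[H]^0$ with $\rg_*^{-1}(K)$ non-compact, and choose $\phi \in \Contc(\Gr[H]^0)$ with $\phi = 1$ on $K$, so $L \defeq \{q \in \Bisp/\Gr : \phi(\rg_*(q)) \geq 1\}$ is closed and non-compact. Since $L_\phi$ is compact, pick a finite-rank operator $T = \sum_{j=1}^m \theta_{\alpha_j, \beta_j}$ with $\norm{L_\phi - T} < 1/2$, approximate each $\beta_j$ by a finite sum $\beta_j' = \sum_i \mu_{j,i}$ with $\mu_{j,i} \in \Contc(W_{j,i})$ for slices $W_{j,i}$, and arrange $\sum_j \norm{\alpha_j}\norm{\beta_j - \beta_j'} < 1/4$. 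The crucial point is that $C \defeq \bigcup_{j,i} \Qu(\supp \mu_{j,i}) \subseteq \Bisp/\Gr$ is then compact, so $L \setminus C \neq \emptyset$; pick $q \in L \setminus C$, a lift $x \in \Qu^{-1}(q)$, a slice $U$ around $x$ with $\Qu(U) \cap C = \emptyset$, and a cutoff $\tilde\psi \in \Contc(U)$ with $\tilde\psi(x) = 1$ and $0 \leq \tilde\psi \leq 1$. Slice computations give $\norm{\tilde\psi} = 1$ and $\norm{L_\phi \tilde\psi} \geq 1$, while \longref{Lemma}{lem:corr_structure} together with $\Qu(\supp\mu_{j,i}) \cap \Qu(U) = \emptyset$ forces $\braket{\mu_{j,i}}{\tilde\psi} = 0$, so $\norm{\braket{\beta_j}{\tilde\psi}} \leq \norm{\beta_j - \beta_j'}$ and $\norm{T\tilde\psi} < 1/4$. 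This contradicts $\norm{L_\phi \tilde\psi} \geq 1$ via $\norm{L_\phi - T} < 1/2$.

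The main obstacle is the converse direction: since $\Bisp/\Gr$ need not be $\sigma$-compact, one cannot in general produce a sequence of unit vectors escaping to infinity. The slice decomposition of $\ContS(\Bisp)$ resolves this, since any finite collection of elements of $\ContS(\Bisp)$ has $\Qu$-image in a compact subset of $\Bisp/\Gr$, which can be avoided using non-compactness of $\rg_*^{-1}(K)$.
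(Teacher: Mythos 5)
Your proof is correct, and its overall strategy coincides with the paper's: both reduce properness of \(\Cst(\Bisp)\) to the statement that \(\Cont_0(\Gr[H]^0)\) acts by compact operators (using nondegeneracy of its embedding into \(\Cst(\Gr[H])\)), identify that action as pointwise multiplication by \(\phi\circ(\rg_{\Bisp})_*\) on \(\Bisp/\Gr\), and use the rank-one computation from \longref{Lemma}{lem:positive_innerprod} for the forward direction. Where you genuinely diverge is the converse. The paper argues abstractly: the same computation shows that \(\Cont_0(\Bisp/\Gr)\) embeds nondegenerately into \(\Comp(\Cst(\Bisp))\), and then a standard \(\Cst\)\nb-fact says that a function in \(\Contb(\Bisp/\Gr)\) acts compactly only if it lies in \(\Cont_0(\Bisp/\Gr)\); since the action of \(\Cont_0(\Gr[H]^0)\) factors through the pullback along \((\rg_{\Bisp})_*\), it lands in \(\Cont_0(\Bisp/\Gr)\) exactly when \((\rg_{\Bisp})_*\) is proper. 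You instead unwind that abstract fact into a bare-hands contradiction: a bump function \(\tilde\psi\) supported on a slice whose \(\Qu\)\nb-image avoids the compact set \(C\) carrying the approximate supports of a finite-rank approximant \(T\) satisfies \(\norm{T\tilde\psi}<1/4\) while \(\norm{L_\phi\tilde\psi}\ge 1\). Both routes are valid; the paper's is shorter and recycles nondegeneracy, while yours is more elementary and makes the geometric content explicit (vectors escaping to infinity in \(\Bisp/\Gr\) defeat any fixed compact operator), and you correctly sidestep the lack of \(\sigma\)\nb-compactness by using a single well-placed bump instead of a sequence. Two small points deserve a sentence in a written version: properness of \((\rg_{\Bisp})_*\) in the paper's sense (universally closed) is equivalent to preimages of compact sets being compact because both \(\Bisp/\Gr\) and \(\Gr[H]^0\) are locally compact Hausdorff; and \(\norm{\tilde\psi}=1\) and \(\norm{L_\phi\tilde\psi}\ge 1\) use that the \(\Cst\)\nb-norm restricted to \(\Contc(\Gr^0)\) is exactly the supremum norm, not merely bounded by it.
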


\begin{proof}
  Since \(\Cont_0(\Gr[H]^0)\) is embedded nondegenerately into
  \(\Cst(\Gr[H])\), a correspondence \(\Cst(\Gr[H])\leftarrow B\)
  for a \(\Cst\)\nb-algebra~\(B\) is proper if and only if
  \(\Cont_0(\Gr[H]^0)\) acts by compact operators.  In the proof of
  \longref{Lemma}{lem:positive_innerprod}, we have shown that the
  operators of pointwise multiplication by functions in
  \(\Contc(\Bisp/\Gr)\) on \(\Cst(\Bisp)\) are compact.  More
  precisely, if \(V\subseteq \Bisp\) is a slice and
  \(\varphi\in \Contc(\Qu(V))\), then we identified the operator of
  pointwise multiplication by \(\abs{\varphi}^2\) with a rank-one
  operator.  Then the \(\Cst\)\nb-completion \(\Cont_0(\Bisp/\Gr)\)
  of \(\Contc(\Bisp/\Gr)\) also acts by compact operators.  Since
  pointwise multiplication by \(\Cont_0(\Bisp/\Gr)\) is
  nondegenerate on \(\ContS(\Bisp)\), it remains nondegenerate on
  \(\Cst(\Bisp)\).  This implies that \(\Cont_0(\Bisp/\Gr)\) embeds
  nondegenerately into the \(\Cst\)\nb-algebra of compact operators
  on \(\Cst(\Bisp)\).  Therefore, pointwise multiplication with a
  function in \(\Contb(\Bisp/\Gr)\) is only compact if the function
  belongs to \(\Cont_0(\Bisp/\Gr)\).  The action of
  \(\Cont_0(\Gr[H]^0)\) on~\(\Cst(\Bisp)\) factors through the
  homomorphism \(\Cont_0(\Gr[H]^0) \to\Contb(\Bisp/\Gr)\) induced by
  \((\rg_{\Bisp})_*\colon \Bisp/\Gr \to \Gr[H]^0\).  This
  homomorphism factors through \(\Cont_0(\Bisp/\Gr)\) if and only if
  \((\rg_{\Bisp})_*\) is proper.  This finishes the proof.
\end{proof}

Tight groupoid correspondences are proper, of course, and therefore
induce proper \(\Cst\)\nb-correspondences.  It is unclear, however,
which further extra properties \(\Cst(\Bisp)\) has if~\(\Bisp\) is a
tight correspondence.

\begin{example}
  If~\(\Gr\) is Hausdorff, then the arrow space of the
  groupoid~\(\Gr\) with the usual left action and the source map as
  right anchor map is a groupoid correspondence
  \(\Bisp\colon \Gr \leftarrow \Gr^0\); here we view~\(\Gr^0\) as a
  groupoid with only identity arrows.  The resulting
  \(\Cst\)\nb-correspondence
  \(\Cst(\Bisp)\colon \Cst(\Gr) \leftarrow \Cst(\Gr^0) \cong
  \Cont_0(\Gr^0)\) is equivalent to a continuous family of
  representations of~\(\Cst(\Gr)\) on a continuous field of Hilbert
  spaces over~\(\Gr^0\).  This is the family of regular
  representations of~\(\Cst(\Gr)\) on the Hilbert spaces
  \(\ell^2(\Gr_x)\) for \(x\in\Gr^0\).  The homomorphism
  \(\Cst(\Gr) \to \Bound(\Cst(\Bisp))\) descends to a faithful
  representation of the \emph{reduced groupoid \(\Cst\)\nb-algebra}.
  In particular, it need not be faithful on~\(\Cst(G)\).  If~\(\Gr\)
  is not Hausdorff, then the above groupoid correspondence no longer
  exists because the orbit space \(\Gr/\Gr_0 = \Gr\) is not
  Hausdorff.  We may, however, pick \(x\in\Gr^0\) and construct a
  representation of~\(\Cst(\Gr)\) on the Hilbert space
  \(\ell^2(\Gr_x)\).  These representations, however, no longer fit
  together into a continuous field.  We may view the representation
  of~\(\Cst(\Gr)\) on \(\ell^2(\Gr_x)\) as a
  \(\Cst\)\nb-correspondence \(\Cst(\Gr) \leftarrow \C = \Cst(\pt)\)
  for the trivial groupoid~\(\pt\).  This comes from
  \(\Gr_x = \s^{-1}(\{x\}) \subseteq \Gr\), viewed as a groupoid
  correspondence \(\Gr_x\colon \Gr \leftarrow \pt\).
\end{example}

The above example shows that it may be hard to characterise when the
left action of \(\Cst(\Gr)\) on \(\Cst(\Bisp)\) for a groupoid
correspondence~\(\Bisp\) is faithful.

\section{Conduché fibrations as diagrams of groupoid
  correspondences}
\label{sec:Conduche}

Let~\(\Cat\) be a small category.  In this section, we examine
bicategory homomorphisms from~\(\Cat\) to~\(\Grcat\) that map each
object of~\(\Cat\) to a locally compact groupoid with only identity
arrows.  If \(\Gr_x\) and~\(\Gr_y\) are locally compact spaces, then
a groupoid correspondence \(\Gr_x\leftarrow \Gr_y\) is the same as a
\emph{topological correspondence} as defined
in~\cite{Albandik-Meyer:Product} (see
\longref{Example}{exa:topological_graph}).  The Cuntz--Pimsner
algebras of the product systems associated to diagrams of proper
topological correspondences are already studied
in~\cite{Albandik-Meyer:Product}, and we have nothing to add to
this, except that it is easy to generalise from diagrams defined
over monoids to diagrams defined over categories.  We would like to
point out, however, that the \(\Cst\)\nb-algebras of discrete
Conduché fibrations studied by Brown and
Yetter~\cite{Brown-Yetter:Conduche} are the special case of this
where the locally compact spaces in the diagram are all discrete.

\begin{definition}[\cite{Brown-Yetter:Conduche}]
  A \emph{discrete Conduché fibration} is a functor
  \(F\colon \Cat[E]\to\Cat\) with the following unique factorisation
  lifting property or \emph{Conduché condition}: if
  \(\varphi\colon y \to x\) is an arrow in~\(\Cat[E]\), then any
  factorisation
  \[
    \begin{tikzcd}
      F(y) \arrow[rr, bend right, "F(\varphi)"'] \arrow[r,"\lambda"] &
      z \arrow[r, "\varrho"] & F(x)
    \end{tikzcd}
  \]
  of~\(F(\varphi)\) in~\(\Cat\) lifts uniquely to a factorisation
  \[
    \begin{tikzcd}
      y \arrow[rr, bend right, "\varphi"'] \arrow[r,"\tilde\lambda"] &
      \tilde{z} \arrow[r, "\tilde\varrho"] & x
    \end{tikzcd}
  \]
  of~\(\varphi\) in~\(\Cat[E]\); here lifting means that~\(F\)
  maps~\(\tilde\lambda\) to~\(\lambda\) and~\(\tilde\varrho\)
  to~\(\varrho\).  A discrete Conduché fibration is
  \emph{row-finite} if for every object~\(X\) in~\(E\) and every
  morphism \(\beta\colon x\to F(X)\) in~\(\Cat\), the class of
  morphisms with target~\(x\) whose image under~\(F\) is~\(\beta\)
  is a ﬁnite set.
\end{definition}

\begin{definition}
  \label{def:Conduche_Cstar}
  Let~\(F\) be a row-finite discrete Conduché fibration.  The
  \(\Cst\)\nb-algebra of~\(F\) is the universal
  \(\Cst\)\nb-algebra~\(\Cst(F)\) generated by orthogonal
  projections~\(P_X\) for \(X\in \Cat[E]^0\) and partial
  isometries~\(S_\alpha\) for morphisms \(\alpha\colon Y\to X\)
  in~\(\Cat[E]\) that satisfy the following relations:
  \begin{enumerate}
  \item \label{en:Conduche_Cstar_1}%
    if \(X\neq Y\), then \(P_X P_Y = 0\);
  \item \label{en:Conduche_Cstar_2}%
    if \(\alpha\) and~\(\beta\) are composable, then
    \(S_{\alpha\beta} = S_\alpha S_\beta\);
  \item \label{en:Conduche_Cstar_3}%
    \(P_x = S_{\id_x}\) for all \(x\);
  \item \label{en:Conduche_Cstar_4}%
    if \(\alpha\colon y\to x\), then \(S_\alpha^* S_\alpha = P_y\);
  \item \label{en:Conduche_Cstar_5}%
    if \(F(\alpha) = F(\beta)\) and \(\alpha \neq \beta\), then
    \(S_\beta^* S_\alpha = 0\);
  \item \label{en:Conduche_Cstar_6}%
    if \(X\in \Cat[E]^0\) and \(g\colon x\to F(X)\) is an arrow
    in~\(\Cat\), then
    \[
      \sum_{F(\alpha)=g, r(\alpha)=X} S_\alpha S_\alpha^* = P_X.
    \]
  \end{enumerate}
\end{definition}

These definitions are inspired by the usual definition of a
(row-finite) higher-rank graph and its \(\Cst\)\nb-algebra: this is
the special case when~\(\Cat\) is the monoid~\((\N^k,+)\).  We will
not say more about the theory of discrete Conduché fibrations and
refer to~\cite{Brown-Yetter:Conduche} for further discussion.  The
theorem below describes them in another way, using the bicategory of
groupoid correspondences.  We believe that this alternative
description is much more informative.  From our point of view, a
discrete Conduché fibration is equivalent to an action of~\(\Cat\)
on discrete sets by correspondences.  In particular, a higher-rank
graph is an action of~\(\N^k\) on a discrete set by correspondences.

To be precise, Brown and Yetter define \(\Cst(F)\) only if~\(F\) is
row-finite and ``strongly surjective''.  This says that the sum on
the right in condition~\ref{en:Conduche_Cstar_6} is non-empty for
all \(X\) and~\(g\).  If this fails, the condition asks for
\(P_X=0\), which may entail further generators to become~\(0\) and
may reduce the whole \(\Cst\)\nb-algebra to be~\(0\).  We allow this
degenerate case, however, because the following theorem remains
true.

The following theorem uses product systems over categories and their
absolute Cuntz--Pimsner algebras.  All this is usually considered
only over monoids, but the definitions make perfect sense over a
category instead.  The quickest way for us to define a product
system over the category~\(\Cat\) is as a bicategory homomorphism
\(\Cat\to\Corr\).  We call a product system proper if this
homomorphism lands in the subbicategory of proper correspondences.
The \emph{absolute Cuntz--Pimsner algebra} of a proper product
system is defined in \cite{Albandik-Meyer:Colimits}*{Definition~6.8}.
Over a monoid, this definition is the usual one, asking the
Cuntz--Pimsner covariance condition for all elements of the
underlying \(\Cst\)\nb-algebra and not just on some ideal.

\begin{theorem}
  \label{the:Conduche}
  A discrete Conduché fibration \(F\colon \Cat[E] \to \Cat\) is
  equivalent to a bicategory homomorphism
  \(\tilde{F}\colon \Cat\to\Grcat\) with the extra property that
  each groupoid~\(\Gr_x\) for \(x\in\Cat^0\) is a discrete set with
  only identity arrows.  Here ``equivalent'' means a bijection on
  isomorphism classes.  The fibration~\(F\) is row-finite if and
  only if the corresponding homomorphism~\(\tilde{F}\) is proper.
  If~\(F\) is row-finite, then \(\Cst(F)\) is naturally isomorphic
  to the absolute Cuntz--Pimsner algebra of the product system
  over~\(\Cat\) associated to~\(\tilde{F}\).
\end{theorem}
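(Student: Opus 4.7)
The plan is to build explicit inverse constructions between the two kinds of data, then match the generators and relations of $\Cst(F)$ with those of the absolute Cuntz--Pimsner algebra. Starting from a discrete Conduché fibration $F\colon \Cat[E]\to \Cat$, we set $\Gr_x \defeq F^{-1}(x)\cap \Cat[E]^0$, viewed as a discrete groupoid of identity arrows, and $\Bisp_p \defeq F^{-1}(p)$ for each arrow $p\colon x\to y$ in~$\Cat$. The commuting trivial actions of $\Gr_y$ on the left and $\Gr_x$ on the right make $\Bisp_p$ a groupoid correspondence $\Gr_y \leftarrow \Gr_x$ automatically, and composition in $\Cat[E]$ gives a map $\sigma_{p,q}\colon \Bisp_p\Grcomp_{\Gr_y}\Bisp_q \to \Bisp_{pq}$, which is a bijection precisely by the unique factorisation lifting property; being between discrete sets it is a homeomorphism. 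Applying the Conduché condition to the trivial factorisation $\id_{F(X)}=\id_{F(X)}\circ \id_{F(X)}$ forces $F^{-1}(\id_x)$ to consist only of identity arrows on $\Gr_x$, so $\Bisp_{\id_x}$ coincides with the identity correspondence $1_{\Gr_x}$; associativity of composition in $\Cat[E]$ yields~\eqref{eq:homomorphism_to_Grcat}.

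Conversely, given a homomorphism $\tilde F$ with discrete~$\Gr_x$, we build $\Cat[E]$ with object set $\bigsqcup_x\Gr_x$, morphism set $\bigsqcup_p\Bisp_p$, composition via the~$\sigma_{p,q}$, and $F$ the obvious projection; the coherence diagram gives associativity, and bijectivity of each~$\sigma_{p,q}$ gives the Conduché lifting property. The two constructions are manifestly inverse on isomorphism classes. Row-finiteness of~$F$ translates term by term: it asks that for each $X\in \Gr_y$ and each $\beta\colon x\to y$, the set $\{\alpha\in \Bisp_\beta : \rg(\alpha)=X\}$ is finite, and since $\Bisp_\beta/\Gr_x = \Bisp_\beta$ and~$\Gr_y$ is discrete, this is exactly properness of $(\rg_{\Bisp_\beta})_*$, i.e.\ properness of~$\Bisp_\beta$.

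For the $\Cst$-algebra identification, the product system attached to~$\tilde F$ assigns to each $x\in\Cat^0$ the algebra $\Cst(\Gr_x)=c_0(\Gr_x)$ and to each $p\colon x\to y$ the Hilbert $c_0(\Gr_x)$-module $\Cst(\Bisp_p)$, which has orthonormal basis $\{\delta_\alpha : \alpha\in\Bisp_p\}$ and inner product $\braket{\delta_\alpha}{\delta_\beta}=\delta_{\alpha,\beta}\cdot\delta_{\s(\alpha)}$ by direct inspection of~\eqref{eq:braket_XX}. A representation of this product system into a $\Cst$-algebra is then equivalent to a family $(P_X,S_\alpha)$ with $P_X\defeq\pi_x(\delta_X)$ and $S_\alpha\defeq\psi_p(\delta_\alpha)$, and the product-system relations applied to basis elements reproduce~\ref{en:Conduche_Cstar_1}--\ref{en:Conduche_Cstar_5} of \longref{Definition}{def:Conduche_Cstar} verbatim. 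The left action satisfies $\phi_p(\delta_X)=\sum_{F(\alpha)=p,\,\rg(\alpha)=X}\theta_{\delta_\alpha,\delta_\alpha}$, a finite sum of rank-one operators by row-finiteness, so the absolute Cuntz--Pimsner covariance condition on $\delta_X$ in fibre~$p$ is precisely $P_X=\sum S_\alpha S_\alpha^*$, which is~\ref{en:Conduche_Cstar_6}; since the~$\delta_X$ are dense in $c_0(\Gr_y)$, this extends to absolute covariance on the whole coefficient algebra. Matching universal properties then gives the desired isomorphism.

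The main obstacle, as usual for universal $\Cst$-algebras presented by generators and relations, will be checking that the algebraic bijection between families $(P_X,S_\alpha)$ satisfying the six Conduché relations and representations of the product system really does extend continuously to the $\Cst$-completions on both sides; once this is in hand, the rest is a formal translation of the Conduché factorisation property into the biequivariant homeomorphism property of the~$\sigma_{p,q}$ and of row-finiteness into properness of the left action.
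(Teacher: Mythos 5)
Your proposal is correct and follows essentially the same route as the paper: decompose \(\Cat[E]\) into the fibres \(\Gr_x = F^{-1}(x)\), \(\Bisp_p = F^{-1}(p)\), translate the unique factorisation lifting property into bijectivity of the composition maps, identify row-finiteness with properness of the correspondences, and match the delta-function generators and the six relations of \longref{Definition}{def:Conduche_Cstar} with Cuntz--Pimsner covariant representations of the product system (your direct argument that \(F^{-1}(\id_x)\) consists of identities replaces the paper's citation of Brown--Yetter, which is fine). The ``obstacle'' you flag at the end is handled in the paper exactly as you suggest: since the delta-functions span dense subspaces of \(\Cst(\Gr_x)\) and \(\Cst(\Bisp_p)\) and the relations already force the generators to be projections and partial isometries, checking the representation conditions on this basis suffices, so the two universal properties coincide.
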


\begin{proof}
  For \(x\in\Cat^0\) and \(g\in\Cat\), let
  \(\Gr_x \defeq F^{-1}(x) \subseteq \Cat[E]^0\) and
  \(\Bisp_g \defeq F^{-1}(g) \subseteq \Cat[E]\), respectively.  So
  \(\Cat[E]^0 = \bigsqcup_{x\in\Cat^0} \Gr_x\) and
  \(\Cat[E] = \bigsqcup_{g\in\Cat} \Bisp_g\).  We view~\(\Gr_x\) for
  \(x\in\Cat^0\) as an étale groupoid with only identity arrows and
  the discrete topology.  The range and source maps
  \(\rg,\s\colon \Cat[E] \rightrightarrows \Cat[E]^0\) restrict to
  maps \(\rg\colon \Bisp_g \to \Gr_{\rg(g)}\) and
  \(\s\colon \Bisp_g \to \Gr_{\s(g)}\) because~\(F\) is a functor.
  These make~\(\Bisp_g\) a groupoid correspondence
  \(\Gr_{\rg(g)} \leftarrow \Gr_{\s(g)}\); the conditions for a
  groupoid correspondence are obviously satisfied because our
  groupoids have only identity arrows and~\(\Bisp_g\) has the
  discrete topology.  Since~\(F\) is a functor, the composition
  in~\(\Cat[E]\) restricts to maps
  \(\mu_{g,h}\colon \Bisp_g \times_{\Gr_{\s(g)}} \Bisp_h \to
  \Bisp_{g\cdot h}\) for all \(g,h\in\Cat\) with \(\s(g) = \rg(h)\).
  Since each~\(\Gr_x\) is a space,
  \(\Bisp_g \times_{\Gr_{\s(g)}} \Bisp_h = \Bisp_g \Grcomp_{\Gr_{\s(g)}}
  \Bisp_h\).  The discrete Conduché fibration condition says exactly
  that each map~\(\mu_{g,h}\) is bijective, as required for a
  homomorphism of bicategories.  The maps~\(\mu_{g,h}\) determine
  the multiplication in~\(\Cat[E]\) because the set of composable
  pairs of arrows in~\(\Cat[E]\) is the disjoint union of the fibre
  products \(\Bisp_g \times_{\Gr_{\s(g)}} \Bisp_h\) for
  \((g,h)\in\Cat^2\).  The associativity of the multiplication
  in~\(\Cat[E]\) is equivalent to the associativity condition for a
  bicategory homomorphism in~\eqref{eq:homomorphism_to_Grcat}.

  If \(x\in\Cat^0\subseteq \Cat\), then \(\Bisp_x = \Gr_x\) is the
  set of all unit arrows on objects \(\hat{x}\in\Cat[E]^0\) with
  \(F(\hat{x})=x\) by \cite{Brown-Yetter:Conduche}*{Lemma~2.3}, as
  required for a groupoid homomorphism.  Since all arrows
  in~\(\Gr_x\) are identities, the multiplication maps~\(\mu_{g,h}\)
  when \(g\) or~\(h\) is a unit arrow in~\(\Cat\) contain no
  information.  So a discrete Conduché fibration
  \(\Cat[E] \to \Cat\) gives a homomorphism of bicategories
  \(\Cat\to\Grcat\) with the extra property that each~\(\Gr_x\) is a
  discrete set viewed as an étale groupoid with the discrete
  topology and only identity arrows.  This construction is
  reversible, that is, any bicategory homomorphism with this extra
  property comes from a discrete Conduché fibration, which is unique
  up to isomorphism.

  The correspondence~\(\Bisp_g\) for an arrow~\(g\) in~\(\Cat\) is
  proper if and only if for each object \(x\in \Cat[E]^0\), the set
  of \(\gamma\in\Bisp_g\) with \(r(\gamma)=g\) is finite.  This
  happens for all~\(g\) if and only if the fibration~\(F\) is
  row-finite.

  Since the spaces \(\Gr_x\) and~\(\Bisp_g\) are discrete, the
  \(\Cst\)\nb-algebras and \(\Cst\)\nb-\alb{}correspondences in our
  proper product system have obvious bases, namely, the
  delta-functions of elements in \(\Gr_x\) and~\(\Bisp_g\),
  respectively.  Let \(\pi_g\colon \Bisp_g \to \Bound(\Hils)\) for
  \(g\in \Cat\) be a Cuntz--Pimsner covariant representation of the
  proper product system \(\Cst(\Bisp_g)_{g\in\Cat}\) over~\(\Cat\).
  Since the delta-functions span dense subspaces in \(\Cst(\Gr_x)\)
  and \(\Cst(\Bisp_g)\), the representation
  \(\pi=(\pi_g)_{g\in\Cat}\) of the product system is determined by
  its values on the delta-functions.  Actually, since
  \(\Bisp_{1_x} = \Gr_x\) for a unit arrow, there is no need to list
  the generators for objects at all.  So we only need the values
  \(S_\alpha \defeq \pi_f(\delta_\alpha)\) if \(f\in \Cat(x,y)\),
  \(x,y\in \Cat^0\), and \(\alpha\in\Bisp_f\).  It is convenient to
  also define \(P_X \defeq S_{1_X} = \pi_{1_x}(\delta_X)\) if
  \(x\in \Cat^0\) and \(X\in\Gr^0_x\).  Actually,
  \(\alpha\in \bigsqcup_{g\in\Cat} \Bisp_g = \Cat[E]\) and
  \(X\in \bigsqcup_{x\in\Cat^0} \Gr^0_x = \Cat[E]^0\).  It suffices
  to check the conditions for a Cuntz--Pimsner covariant
  representation of the product system on the basis of
  delta-functions.  We claim that a
  family~\((S_\alpha)_{\alpha\in\Cat[E]}\) as above defines a
  representation if and only if
  \begin{enumerate}
  \item \label{Conduche:1}%
    \(P_X \defeq S_{1_X}\) for
    \(X\in \bigsqcup_{x\in\Cat^0} \Gr_x = \Cat[E]^0\) are
    mutually orthogonal projections;
  \item \label{Conduche:2}%
    each~\(S_\alpha\) is a partial isometry;
  \item \label{Conduche:3}%
    for all \(\alpha\in\Cat[E]\),
    \(S_\alpha^* S_\alpha = P_{\s(\alpha)}\);
  \item \label{Conduche:4}%
    if \(\alpha,\beta\in\Cat[E]\) are different, then
    \(S_\alpha^* S_\beta = 0\);
  \item \label{Conduche:5}%
    if \(\alpha\) and~\(\beta\) are composable in
    \(\Cat[E] \defeq \bigsqcup_{g\in\Cat} \Bisp_g\), then
    \(S_\alpha S_\beta = S_{\alpha\beta}\).
  \item \label{Conduche:6}%
    if \(x,y\in \Cat^0\), \(X\in\Gr_x^0\), \(g\in\Cat(y,x)\), then
    \(P_X = \sum_{\alpha\in \Bisp_g, \rg(\alpha) = X} S_\alpha
    S_\alpha^*\).
  \end{enumerate}
  Condition~\ref{Conduche:1} says that \(\delta_X\mapsto P_X\)
  extends to a \Star{}homomorphism on
  \(\Cont_0\bigl(\bigsqcup_{x\in\Cat^0} \Gr_x^0\bigr) =
  \Cont_0(\Cat[E]^0)\).  Condition~\ref{Conduche:2} follows from
  \ref{Conduche:3} and~\ref{Conduche:4}, and these say that
  \(\pi_{\s(g)}(\braket{\delta_\alpha}{\delta_\beta}) =
  \pi_g(\delta_\alpha)^* \pi_g(\delta_\beta)\) for all \(g\in\Cat\),
  \(\alpha,\beta\in\Bisp_g\); this is the condition for the right
  inner product in a Toeplitz representation of a product system.
  Condition~\ref{Conduche:5} says that
  \(\pi_g(S_\alpha) \pi_h(S_\beta) = \pi_{g h}(S_{\alpha \beta})\)
  for all \((g,h)\in\Cat^2\) and \(\alpha\in \Bisp_g\),
  \(\beta\in \Bisp_h\); this is the condition for the multiplication
  in a Toeplitz representation of a product system for \(g,h\) not a
  unit.  If \(g\) or~\(h\) is a unit, then the conditions
  \(P_X S_\alpha = \delta_{X,\rg(\alpha)} S_\alpha\) and
  \(S_\alpha P_X = \delta_{X,\s(\alpha)} S_\alpha\) follow from
  \ref{Conduche:1}, \ref{Conduche:3} and~\ref{Conduche:6}.
  Condition~\ref{Conduche:6} says that say the representation
  of~\(\Cst(\Bisp_g)\) for an arrow \(g\in\Cat(y,x)\) is
  Cuntz--Pimsner covariant on all elements of \(\Cst(\Gr_x)\); by
  assumption, the product system is proper, so that \(\Cst(\Gr_x)\)
  acts on~\(\Cst(\Bisp_g)\) by compact operators.  Now it is easy to
  see that the resulting list of conditions is equivalent to the
  list in \longref{Definition}{def:Conduche_Cstar}.  Therefore, both
  universal properties define the same \(\Cst\)\nb-algebra.
\end{proof}

We now specialise this to \(k\)\nb-graph \(\Cst\)\nb-algebras
(see~\cite{Kumjian-Pask:Higher_rank}):

\begin{corollary}
  A \(k\)\nb-graph is equivalent to a bicategory homomorphism from
  \((\N^k,+)\) viewed as a bicategory to~\(\Grcat\) that maps the
  unique object of the bicategory~\(\N^k\) to a discrete set.  The
  \(k\)\nb-graph is row-finite if and only if it corresponds to a
  homomorphism to~\(\Grcat_\prop\).  Then the \(\Cst\)\nb-algebra of
  the \(k\)\nb-graph is naturally isomorphic to the absolute
  Cuntz--Pimsner algebra of the resulting proper product system
  over~\(\N^k\).
\end{corollary}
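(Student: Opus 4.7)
The plan is to obtain this as a direct specialisation of \longref{Theorem}{the:Conduche} to the category \(\Cat = (\N^k,+)\) viewed as a one-object bicategory with only identity \(2\)\nb-arrows. The first step is to recall that, in the sense of Kumjian and Pask, a \(k\)\nb-graph is exactly a functor \(F\colon \Lambda \to \N^k\) satisfying the unique factorisation property; comparing definitions, the unique factorisation property is literally the Conduché condition. Thus \(k\)\nb-graphs are in bijection with discrete Conduché fibrations over~\(\N^k\), and a routine check shows that the usual notion of a row-finite \(k\)\nb-graph matches the notion of a row-finite discrete Conduché fibration in \longref{Theorem}{the:Conduche}.

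Applying the first part of \longref{Theorem}{the:Conduche} to such an~\(F\), I obtain a bicategory homomorphism \(\tilde F\colon \N^k \to \Grcat\) that sends the unique object to the discrete groupoid \(\Lambda^0 = F^{-1}(0)\) and each \(n\in\N^k\) to the groupoid correspondence \(F^{-1}(n) = \Lambda^n\), equipped with the standard source and range maps of the \(k\)\nb-graph.  Because all groupoids involved have only identity arrows, the homomorphism~\(\tilde F\) automatically has the property stated in the corollary, and conversely, any bicategory homomorphism \(\N^k \to \Grcat\) whose object value is a discrete set comes from a \(k\)\nb-graph by inverting this construction. The theorem also gives that row-finiteness of~\(F\) is equivalent to~\(\tilde F\) landing in \(\Grcat_\prop\).

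For the \(\Cst\)\nb-algebraic statement, I appeal to the final claim of \longref{Theorem}{the:Conduche}, which identifies \(\Cst(F)\) with the absolute Cuntz--Pimsner algebra of the associated proper product system. The only point that needs verification is that, for \(\Cat=\N^k\), the relations in \longref{Definition}{def:Conduche_Cstar} coincide with the standard Kumjian--Pask relations defining the \(k\)\nb-graph \(\Cst\)\nb-algebra: orthogonal projections at vertices, partial isometries for paths composing according to composition in~\(\Lambda\), the inner-product relation \(S_\alpha^* S_\alpha = P_{\s(\alpha)}\), orthogonality \(S_\alpha^* S_\beta = 0\) for distinct paths of the same degree, and, for each vertex~\(v\) and each \(n\in\N^k\), the covariance relation \(P_v = \sum_{\alpha\in v\Lambda^n} S_\alpha S_\alpha^*\). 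This match is immediate from the definitions.

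The only conceptual obstacle is therefore purely bookkeeping: one must confirm that the degreewise indexing by \(n\in\N^k\) that appears in the Kumjian--Pask relations corresponds to the indexing by arrows \(g\in\Cat\) in \longref{Definition}{def:Conduche_Cstar}; this is clear since the arrows of \(\N^k\) as a one-object category are exactly the elements of~\(\N^k\). No further analytic work is needed, since all operator-algebraic content has been absorbed into \longref{Theorem}{the:Conduche} and \longref{Theorem}{the:Cstar_X_proper}.
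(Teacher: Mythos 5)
Your proposal is correct and takes essentially the same route as the paper: the paper's proof is a one-line specialisation of \longref{Theorem}{the:Conduche} to \(\Cat=(\N^k,+)\), citing Brown--Yetter for the identification of \(k\)\nb-graphs with discrete Conduché fibrations over~\(\N^k\). You merely spell out the bookkeeping (unique factorisation equals the Conduché condition, row-finiteness matches, and the relations of \longref{Definition}{def:Conduche_Cstar} specialise to the Kumjian--Pask relations) that the paper delegates to the cited reference.
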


\begin{proof}
  This follows from \longref{Theorem}{the:Conduche} because discrete
  Conduché fibrations with \(\Cat=(\N^k,+)\) are the same as
  rank-\(k\) graphs (see~\cite{Brown-Yetter:Conduche}).
\end{proof}

We may also allow each~\(\Gr_x\) to be a locally compact space, made
a groupoid with only identity arrows.  We propose this as the right
locally compact version of a Conduché fibration.  Now the category
\(\Cat[E] = \bigsqcup_{g\in\Cat} \Bisp_g\) is a locally compact
topological category with a continuous functor
\(F\colon \Cat[E]\to\Cat\) that is a discrete Conduché fibration as
defined above when we forget the topologies.  In addition, the
source map \(\s\colon \Cat[E]\to\Cat[E]^0\) and the multiplication
in~\(\Cat[E]\) are local homeomorphisms.  These extra conditions are
necessary and sufficient for a functor \(F\colon \Cat[E]\to\Cat\) to
come from a bicategory homomorphism \(\Cat\to\Grcat\) by the
construction above.

If~\(\Gr_x\) are locally compact spaces, then a
groupoid correspondence \(\Gr_x\leftarrow \Gr_y\) is the same as a
\emph{topological correspondence} as defined
in~\cite{Albandik-Meyer:Product} (see
\longref{Example}{exa:topological_graph}).  When~\(\Cat\) is a
monoid, we get actions of that monoid on a topological space by
topological correspondences exactly as
in~\cite{Albandik-Meyer:Product}.  If \(\Cat=(\N^k,+)\), we get the
topological rank-\(k\) graphs by
Yeend~\cite{Yeend:Topological-higher-rank-graphs}.  The topological
analogue of row-finiteness says exactly that these topological
correspondences are proper.  If they are also all surjective, then
the \(\Cst\)\nb-algebra of the topological rank-\(k\) graph is
isomorphic to the absolute Cuntz--Pimsner algebra of the proper
product system over~\(\N^k\) defined by the homomorphism
\(\N^k \to \Grcat \to \Corr\).

We may also replace a discrete set by a discrete group~\(G\) or a
transformation group \(V\rtimes \Gamma\) for an action of a
group~\(\Gamma\) on a discrete set~\(V\).  We have related groupoid
correspondences on \(V\rtimes \Gamma\) to self-similar graphs in
\longref{Example}{exa:Exel-Pardo}.  In an analogous way, a
bicategory homomorphism \(\N^k \to \Grcat_\prop\) that maps the
unique object of the category~\(\N^k\) to \(V\rtimes \Gamma\) is
equivalent to a self-similar (row-finite) \(k\)\nb-graph as defined
by Li and Yang~\cite{Li-Yang:Self-similar_k-graph}.  They also
describe the \(\Cst\)\nb-algebra of such an object as the
Cuntz--Pimsner algebra of a proper product system over~\(\N^k\).
For each $n \in \N^k$ and $\mu \in d^{-1}(n)$, the map
\(\delta_{\mu,g} \mapsto \xi_\mu j(g)\) implements an isomorphism
from \(\Cst(\Bisp)\) to the \(\Cst\)\nb-correspondence that is
denoted by \(X_{G,\Lambda,n}\)
in~\cite{Li-Yang:Self-similar_k-graph}. Then the product system by
Li and Yang is naturally isomorphic to the product system
corresponding to the composite homomorphism
\(\N^k \to \Grcat \to \Corr\).

Summing up, various constructions of \(\Cst\)\nb-algebras from
combinatorial or dynamical data may be realised as Cuntz--Pimsner
algebras of product systems associated to homomorphisms to the
bicategory of groupoid correspondences.  This interprets these
\(\Cst\)\nb-algebras as covariance algebras of generalised dynamical
systems, where a category acts on an étale groupoid by groupoid
correspondences.

\begin{bibdiv}
  \begin{biblist}
    \bibselect{references}
  \end{biblist}
\end{bibdiv}
\end{document}